\pdfoutput=1
\documentclass[11pt,letterpaper]{article}
\usepackage[margin=1in]{geometry}

\usepackage{amsmath,amssymb}
\usepackage{amsthm}
\usepackage[T1]{fontenc}
\usepackage[utf8]{inputenc}
\usepackage{tgtermes}
\usepackage{newtxtext}
\usepackage{newtxmath}

\usepackage{bm}

\usepackage{mathtools}
\usepackage{microtype}
\usepackage{graphicx}
\usepackage[dvipsnames]{xcolor}
\usepackage{booktabs}
\usepackage{makecell}
\usepackage{enumitem}
\usepackage[font=small,labelfont=bf]{caption}
\usepackage{authblk}

\usepackage[sort&compress]{natbib}
\bibpunct[, ]{[}{]}{,}{n}{}{,}

\usepackage[colorlinks=true,linkcolor=MidnightBlue,citecolor=MidnightBlue,urlcolor=MidnightBlue]{hyperref}
\usepackage{algorithm}
\usepackage{algpseudocode}

\theoremstyle{plain}
\newtheorem{theorem}{Theorem}[section]
\newtheorem{lemma}[theorem]{Lemma}
\newtheorem{proposition}[theorem]{Proposition}
\newtheorem{corollary}[theorem]{Corollary}
\newtheorem{assumption}[theorem]{Assumption}
\theoremstyle{definition}
\newtheorem{remark}[theorem]{Remark}
\newtheorem{definition}[theorem]{Definition}

\makeatletter

\newenvironment{proof}[1]{\par\normalfont\topsep6\p@\@plus6\p@\relax
  \trivlist\item[\hskip\labelsep\itshape #1]\ignorespaces}{\endtrivlist\@endpefalse}
\newcommand{\Halmos}{\ifmmode\quad\square\else
  \leavevmode\unskip\penalty9999\hbox{}\nobreak\hfill\quad\hbox{$\square$}\fi}
\newcommand{\appendixheadings}{%
  \let\old@seccntformat\@seccntformat
  \renewcommand{\@seccntformat}[1]{%
    \ifnum\pdfstrcmp{##1}{section}=0 Appendix~\thesection.\quad\else\old@seccntformat{##1}\fi}}
\makeatother

\newcommand{\E}{\mathbb{E}}

\newcommand{\R}{\mathbb{R}}
\newcommand{\1}{\mathbf{1}}
\newcommand{\spn}{\mathrm{sp}}
\newcommand{\lse}{\mathrm{lse}}
\newcommand{\calF}{\mathcal{F}}
\newcommand{\calG}{\mathcal{G}}
\newcommand{\calH}{\mathcal{H}}
\DeclareMathOperator{\diag}{diag}
\DeclareMathOperator{\Cov}{Cov}

\title{Entropy-Regularized Mean-Field Games\\
with Average-Reward Criterion:\\
Span-Seminorm Contractions and Model-Free Learning}
\author[1]{G\"une\c{s} Akba\c{s}}
\author[1]{Naci Sald\i}
\author[2]{Berkay Anahtarci}
\author[2]{Can Deha Kariksiz}
\affil[1]{Department of Mathematics, Bilkent University, Ankara 06800, T\"urkiye\protect\\
\texttt{gunes.akbas@bilkent.edu.tr}, \texttt{naci.saldi@bilkent.edu.tr}}
\affil[2]{Department of Mathematical Engineering, \"Ozye\u{g}in University, \.Istanbul 34794, T\"urkiye\protect\\
\texttt{berkay.anahtarci@ozyegin.edu.tr}, \texttt{deha.kariksiz@ozyegin.edu.tr}}

\date{}

\hypersetup{
  pdftitle={Entropy-Regularized Mean-Field Games with Average-Reward Criterion: Span-Seminorm Contractions and Model-Free Learning},
  pdfauthor={G\"une\c{s} Akba\c{s}, Naci Sald\i, Berkay Anahtarci, Can Deha Kariksiz},
  pdfkeywords={Mean-field games, average reward, entropy regularization, span seminorm, Q-learning, stochastic approximation}
}

\begin{document}
\maketitle

\begin{abstract}
We study stationary discrete-time mean-field games with finite state and action spaces under the average-reward criterion, with an entropy-regularized objective and population-dependent dynamics and rewards. Under a minorization condition, the soft Q-operator is a span-seminorm contraction for the full transition kernel, so relative value iteration does not require the minorizing measure. When the Lipschitz constant of the kernel is below the minorization mass and the temperature exceeds an explicit threshold, the equilibrium fixed-point operator is a contraction: an equilibrium exists, its population distribution is unique, and a fixed-point iteration converges to it geometrically. We then learn the equilibrium model-free from a simulator, combining soft Q-learning at a fixed population distribution with an empirical transition kernel. We prove a finite-sample bound for soft Q-learning in the span seminorm, with temperature-independent constants, and derive convergence rates for the learned population distribution and policy. A congestion-aware routing game illustrates the results.

\medskip\noindent\textbf{Keywords:} Mean-field games, average reward, entropy regularization, span seminorm, Q-learning, stochastic approximation

\smallskip\noindent\textbf{MSC2020 subject classification:} 91A16, 90C40, 93E35, 68T05
\end{abstract}

\section{Introduction}
\label{sec:intro}

Mean-field games (MFGs) model strategic interaction in large populations of symmetric agents, each of whom is affected by the others only through the distribution of their states. The theory was introduced independently by \citet{huang2006} and by \citet{lasry2007}. In place of the $N$-player game, whose analysis becomes intractable as $N$ grows, one studies a single representative agent facing a given population distribution: the agent solves an optimal control problem for that distribution, and an equilibrium is a distribution that is reproduced when every agent uses the resulting optimal policy. For the models and criteria treated by \citet{huang2006} and \citet{saldi2018}, equilibria of this limiting problem yield approximate Nash equilibria of the $N$-player game when $N$ is large; we do not study the finite-population game here. Applications in operations research and economics range from industry dynamics \citep{WeBeVa08,AdJoWe15} to congestion and routing \citep{CaEtAl22}; see \citet{BeFrYa13} for an introduction and \citet{CaDe18a,CaDe18b} for a comprehensive treatment.

We study stationary discrete-time MFGs with finite state and action spaces under the long-run average-reward criterion and an entropy-regularized objective. Both the transition kernel and the one-stage reward depend on the population distribution. For a fixed population distribution, the representative agent chooses a stationary Markov policy to maximize the long-run average reward together with a temperature-weighted Shannon entropy term. A stationary mean-field equilibrium consists of a policy and a population distribution such that the policy is optimal for the given population distribution, while the population distribution is invariant under the state process induced by that policy. We show that, under a minorization condition and suitable regularity of the transition kernel, and for sufficiently high temperature, the equilibrium population distribution is the unique fixed point of a contraction and can therefore be computed by a geometrically convergent iteration. We then make this iteration model-free, learning the equilibrium from simulated transitions with finite-sample guarantees; the precise conditions and results are stated in Section~\ref{sec:contributions}. Our approach rests on two ingredients: the span-seminorm theory of the average-reward optimality equation under a minorization condition, and the smoothness of the softmax policy produced by entropy regularization.

\subsection{Background and related work}
\label{sec:related}

\paragraph{Discrete-time and ergodic mean-field games.}
Discrete-time games with a continuum of anonymous players go back to the anonymous sequential games of \citet{JoRo88}; related equilibrium notions in operations research and economics include the oblivious equilibrium of \citet{WeBeVa08} and the stationary equilibrium of \citet{AdJoWe15}. \citet{GoMoSo10} established the existence of equilibria for discrete-time MFGs with finite state spaces. For discounted discrete-time MFGs with Polish state and action spaces, \citet{saldi2018} proved the existence of a mean-field equilibrium and showed that the equilibrium policy constitutes an approximate Markov--Nash equilibrium of the corresponding $N$-player game when $N$ is sufficiently large. In continuous time, ergodic MFGs are studied, for instance, by \citet{CaLaLiPo12}, \citet{BaPr14} and \citet{ArBiCa17}. In discrete time, existence of equilibria under the average-reward criterion was established by \citet{WiAl15} for stationary anonymous sequential games with finite state and action spaces, by \citet{Bis15} on $\sigma$-compact Polish state spaces, by \citet{Wie19} on compact spaces under a minorization condition, and by \citet{Sal20} on Polish spaces. These works establish the existence of equilibria and, in some cases, relate them to the $N$-player game, but they do not provide algorithms with convergence guarantees for computing an equilibrium. Such algorithms were developed for average-cost MFGs by \citet{AnKaSa20,anahtarci2019learning}.

\paragraph{Uniqueness, computation and regularization.}
Mean-field equilibria need not be unique. Uniqueness is classically obtained under the monotonicity condition of \citet{lasry2007}, which also underlies the convergence of fictitious play and of online mirror descent in MFGs \citep{ElPeLaGePi20,PeEtAl20,PeEtAl22}. A second approach is to show that the mean-field operator, which maps a population distribution to its one-step push-forward under the corresponding best response, is a contraction; this yields uniqueness together with a convergent algorithm. With finitely many actions, however, the best response requires choosing an action with the highest value, and there may be multiple optimal actions in case of a tie. \citet{CuKo21} showed for finite-horizon finite MFGs that, for any deterministic selection of best responses, the operator mapping a mean-field flow to the flow it generates from a fixed initial distribution is either constant or not even Lipschitz continuous, and \citet{GuHuXuZh19,GuHuXuZh23} argued and illustrated numerically that naively combining reinforcement learning with fixed-point iteration in classical MFGs yields unstable algorithms. The contraction property has been recovered in two ways. For continuous action sets, \citet{anahtarci2019learning} assumed that the one-stage cost plus the expected continuation value is strongly convex in the action, so that the optimal action depends Lipschitz continuously on the population distribution; their analysis covers both the discounted and the average-cost criteria. Alternatively, the best response can be smoothed, either by applying a softmax (Boltzmann) policy to the unregularized action values \citep{GuHuXuZh19,GuHuXuZh23,CuKo21,ZaKoBhBa23} or by adding an entropy-type regularizer to the objective; we discuss the latter approach next.

\paragraph{Entropy-regularized mean-field games.}
Adding a temperature-weighted entropy term to the agent's objective replaces the best response with a softmax of the regularized action values; this policy is unique and depends Lipschitz continuously on those values. The resulting regularized equilibrium, in which agents randomize over their actions, is an approximate equilibrium of the unregularized game, with an error that vanishes as the temperature tends to zero \citep{CuKo21,XiYaWaMi21}.
The regularized mean-field operator is a contraction when the regularization is sufficiently strong, in which case the regularized equilibrium is unique and can be computed by fixed-point iteration. Results of this type are available for finite-horizon games with entropy regularization \citep{CuKo21} and, under population-independent dynamics, with Kullback--Leibler regularization towards a reference policy \citep{guan2022}; for discounted stationary games with a regularizer that is strongly convex in the randomized action \citep{AnKaSa23}; and, under refined contraction conditions, for finite-horizon games \citep{AyBaSa25} and for multi-population finite-horizon and stationary games \citep{AyBa26}.
The sufficient conditions in these results require the regularization to be strong relative to the Lipschitz constants of the model \citep{CuKo21,AnKaSa23,YaCaGeHe23} and the contraction constant depends on the discount factor or the horizon; in the finite-horizon case, the regularization strength required by the available condition grows with the horizon length \citep{guan2022}. Learning algorithms with convergence guarantees likewise target regularized equilibria and rely on contraction conditions \citep{XiYaWaMi21,YaCaGeHe23}. \citet{YaGoHe24} note that such guarantees require heavy regularization, which biases the computed equilibrium. They also show that, without structural assumptions such as contractivity or monotonicity, computing approximate equilibria of stationary and finite-horizon MFGs is PPAD-complete.

In continuous time, entropy-regularized MFGs have been studied in linear-quadratic and exploratory settings \citep{GuXuZa22,FiJa22}; for finite-state models, \citet{YaAyBa26} show that a sufficiently large discount rate yields a horizon-independent contraction, which is not available in the undiscounted case. In discrete time, results under the average-reward criterion are more limited. \citet{GoMoSo10} proved uniqueness of stationary solutions by a contraction argument for a sufficiently large entropy penalty, in a model where players choose transition probabilities directly. More recently, \citet{AyBaSa26} approximate average-reward equilibria of regularized games by finite-horizon equilibria using model-based schemes; their conditions allow multiple stationary equilibria, and their uniqueness results concern the terminal-value problems. For the inverse problem, maximum-causal-entropy methods have been used to explain (unregularized) equilibrium behavior under the average-reward criterion \citep{AlSa25,AlSaAnKa26}. %

\paragraph{Learning mean-field equilibria.}
When the model is unknown but a simulator is available, the fixed-point iteration can be implemented by alternating an inner reinforcement-learning step at a frozen population with an outer population update \citep{GuHuXuZh19,GuHuXuZh23,anahtarci2019learning,AnKaSa23}. These works, like the present paper, assume a simulator that can be queried at an arbitrary population distribution. Other schemes for infinite-horizon discounted MFGs include those of \citet{SuMa19}, \citet{AnFoLa22} and \citet{XiYaWaMi21}. Finite-sample guarantees have also been obtained without a mean-field oracle along a single sample path \citep{ZaKoBhBa23}, and for independent learning by finitely many agents with no control over the population distribution \citep{YaCaGeHe23}; see \citet{LaEtAl22} for a survey. Results for the average-reward criterion are scarcer. \citet{FuYaChWa20} prove linear convergence of a model-free actor-critic method for discrete-time linear-quadratic MFGs with ergodic cost. \citet{anahtarci2019learning} combine fitted Q-iteration with empirical population updates and provide an error analysis under the strong convexity assumption described above. \citet{ZeBhKoGa25} propose a single-loop actor-critic algorithm that learns along a single sample path, with finite-time guarantees for unregularized average-reward MFGs satisfying a structural condition that permits multiple equilibria. \citet{OcEtAl25} establish finite-sample bounds for a trust-region policy optimization method for entropy-regularized MFGs, using a discounted formulation as an approximation of the ergodic one and a reset-and-step oracle weaker than a generative model.

\paragraph{Average-reward MDP tools.}

At a fixed population distribution, the representative agent faces an average-reward MDP, whose optimality equation (AROE) can be solved by relative value iteration \citep{white1963,Put94,HeLa96}. Entropy-regularized MDPs, including the average-reward case, are treated by \citet{NeJoGo17} and \citet{GeScPi19}. Under a minorization condition on the transition kernel \citep[Section~3.3]{Her89}, which has also been used for average-reward MFGs \citep{Wie19,Sal20,AnKaSa20,anahtarci2019learning}, two contraction arguments are available. In the first, the operator built on the residual kernel, obtained by subtracting the minorizing measure, is a supremum-norm contraction; this route underlies the model-based analyses of \citet{AnKaSa20,anahtarci2019learning}, and \citet{AlSaAnKa26} apply it to the soft Bellman operator at a fixed population distribution in the inverse problem. In the second, the operator built on the full kernel is a contraction in the span seminorm \citep[Section~5.5]{HeLa96}. The first route requires the minorizing measure, which is unavailable when the model is unknown; for this reason, \citet{anahtarci2019learning} switch to the span seminorm in the error analysis of their learning algorithm. For a single MDP, relative value iteration Q-learning goes back to \citet{AbBeBo01}; finite-sample bounds in the span seminorm were obtained by \citet{ZhZhMa21}, and a general theory of seminorm-contractive stochastic approximation was developed by \citet{ChZhZhHaMa25}.

\subsection{The gap addressed by this paper}
\label{sec:gap}

To our knowledge, the literature reviewed above leaves a gap for discrete-time MFGs with finite state and action spaces, population-dependent dynamics and the average-reward criterion. The available contraction analysis of the mean-field operator under this criterion \citep{anahtarci2019learning} relies on the strong convexity assumption recalled above, which has no counterpart when the action set is finite, and its model-based part is formulated through the residual kernel and therefore requires the minorizing measure. Entropy regularization over finite actions can be viewed as a strongly concave regularization of the randomized action, as in \citet{AnKaSa23}, but the lifted model does not satisfy the hypotheses of \citet{anahtarci2019learning} as stated, since the entropy is neither Lipschitz continuous nor differentiable at the boundary of the simplex. Apart from \citet{GoMoSo10}, where players choose transition probabilities directly, contraction results for regularized MFGs are limited to the discounted and finite-horizon criteria, and learning guarantees for average-reward MFGs cover only unregularized games \citep{FuYaChWa20,anahtarci2019learning,ZeBhKoGa25} or discounted approximations \citep{OcEtAl25}. What is missing is a treatment of the entropy-regularized average-reward MFG in which a span-seminorm contraction of the soft Q-operator, combined with a contraction of the population update, yields uniqueness and computation of the equilibrium as well as convergence of a simulator-based learning scheme, with explicit constants and finite-sample rates. The present paper provides such a treatment. In our analysis, the minorization mass takes over the role played by the discount factor or the horizon in the contraction results above. The algorithms use this mass only through the step sizes and require no knowledge of the minorizing measure.

\subsection{Main results and contributions}
\label{sec:contributions}

We work under two standing assumptions (Section~\ref{sec:mfg_lip}): the reward and the transition kernel are Lipschitz continuous in the state, the action and the population distribution, and the kernel satisfies a minorization condition uniformly in the population distribution. For the learning results we assume a simulator that, for any state, action and population distribution, returns the reward and an independent next-state sample; the algorithms query all state-action pairs synchronously. Our contributions are as follows.

\begin{enumerate}[label=(\arabic*),leftmargin=*]
\item \emph{The soft optimality equation in the span seminorm.} The soft Q-operator, obtained by replacing the hard maximum with the log-sum-exp, is a span-seminorm contraction with the same modulus and the same full kernel as its unregularized counterpart (Theorem~\ref{thm:Lhat_delta_span}). Soft relative value iteration therefore solves the soft optimality equation without the minorizing measure (Corollary~\ref{cor:Lhat_soft_banach}). We also record a Lipschitz bound for the softmax map, with constant equal to the inverse temperature and invariant under additive constants (Lemma~\ref{lem:softmax_linf_l1}); bounds of this type are known \citep{GaPa18}.

\item \emph{Existence, uniqueness and computation of the equilibrium.} We show that the mean-field operator is Lipschitz continuous on the simplex with an explicit constant (Theorem~\ref{thm:F-contraction}). The proof combines three estimates: the soft Q-function is Lipschitz in the population distribution in the span seminorm, with a constant that does not involve the temperature (Lemma~\ref{lem:lipschitz-Q}); the softmax policy is Lipschitz in the population distribution in the $\ell^1$-norm, with a constant proportional to the inverse temperature (Lemma~\ref{lem:lipschitz-policy}); and the population update satisfies a Dobrushin-type bound (Corollary~\ref{cor:dobrushin}). When the Lipschitz constant of the kernel is smaller than the minorization mass and the temperature exceeds an explicit threshold, the operator is a contraction. A mean-field equilibrium then exists, its population distribution is unique, and the fixed-point iteration converges to it geometrically; the equilibrium policy is the softmax policy at that distribution and is unique on its support (Corollary~\ref{cor:MFNE}).

\item \emph{Finite-sample analysis of soft Q-learning.} At a fixed population we learn the soft Q-function by a synchronous soft relative value iteration Q-learning scheme (Algorithm~\ref{alg:softQL}). With explicit step sizes, the mean-square error in the span seminorm decays inversely with the number of iterations; the constants depend logarithmically on the number of state-action pairs and polynomially on the inverse minorization mass, and they do not depend on the temperature (Theorem~\ref{thm:softQL-rate}). The corresponding bound for the learned softmax policy carries an additional factor proportional to the squared inverse temperature. The key step of the proof subtracts from the error a scalar sequence defined through the minorizing measure, which appears in the analysis only; this turns the span-contractive recursion into a stochastic approximation with a supremum-norm contractive drift (Lemma~\ref{lem:reduction}), to which the finite-sample theory of \citet{ChMaShSh20} applies.

\item \emph{Simulator-based learning of the equilibrium.} Replacing the exact soft Q-function by the output of soft Q-learning, and the exact kernel by an empirical kernel built from fresh samples, gives an inexact fixed-point iteration (Algorithm~\ref{alg:mfg-learning}). At every outer iteration, we bound the expected errors of the learned population distribution and policy by the geometric decay of the exact iteration plus the accumulated inner errors, each of the order of the inverse square roots of the inner sample sizes (Theorem~\ref{thm:mfg-learning}). Consequently, the learned population distribution and policy converge in expectation to the equilibrium when the inner sample sizes grow without bound; with constant sample sizes the error bound settles at a floor of that order, and with geometrically growing sample sizes it decreases geometrically in the number of outer iterations (Corollary~\ref{cor:budgets}).
\end{enumerate}

\subsection{Organization}

Section~\ref{sec:classical} collects the model, the average-reward optimality equation for Q-functions and its span-seminorm solution theory. Section~\ref{sec:entropy} develops entropy regularization: it introduces the softmax policy and the
log-sum-exp value, obtains the soft Q-AROE, transfers the
contraction result to the regularized operator, and proves the Lipschitz
continuity of the softmax. Section~\ref{sec:mfg} formulates the entropy-regularized
average-reward mean-field game, proves that the mean-field operator is a
contraction, and deduces existence, uniqueness, and computability of the equilibrium. Section~\ref{sec:learning} contains the model-free learning algorithm and its analysis. Section~\ref{sec:numerics} reports the numerical experiments, and Section~\ref{sec:conclusion} concludes. The proofs of the results in Sections~\ref{sec:classical} and~\ref{sec:entropy}  are collected in Appendices~\ref{app:sec2} and~\ref{app:sec3}, respectively; the proofs for Sections~\ref{sec:mfg} and~\ref{sec:learning} are given in the main text.

\paragraph{Notation.}
For a finite set $E$, $\Delta(E)$ denotes the set of probability distributions on $E$, equipped with the $\ell^1$-norm $\|p - q\|_1 := \sum_{e \in E}|p(e) - q(e)|$. For $h:E\to\mathbb{R}$ we use the sup-norm and the span seminorm
\[
\|h\|_{\infty}:=\max_{e\in E}|h(e)|,
\qquad
\spn(h):=\max_{e\in E}h(e)-\min_{e\in E}h(e).
\]
The span seminorm vanishes exactly on the constant functions, satisfies $\spn(h+c\1)=\spn(h)$ for every $c\in\mathbb{R}$, where $\1$ denotes the constant function equal to one, and is a norm on the quotient space $\mathbb{R}^E/\mathbb{R}\1$; moreover $\spn(h)\le2\|h\|_\infty$ and $\min_{c\in\R}\|h-c\1\|_\infty=\spn(h)/2$.

\section{Average-Reward MDPs and the Span-Seminorm Approach}
\label{sec:classical}

This section introduces the MDP model and collects the classical facts about the average-reward optimality equation and its solution under a minorization condition. The material is standard \citep[Chapter~5]{HeLa96}, and we include proofs (in Appendix~\ref{app:classical}) only for the span-seminorm contraction, since the same argument is repeated in Section~\ref{sec:entropy} for the entropy-regularized MDP problem.

\subsection{Model, policies and notation}
\label{sec:policies}

\begin{definition}
\label{def:mdp}
A \emph{discrete-time MDP} is a tuple $(S,A,\{A(x)\},P,r)$ with a finite \emph{state space} $S$; a finite \emph{action space} $A$; a \emph{controlled transition kernel} $P(\cdot\mid x,a)\in\Delta(S)$
    for each $(x,a) \in S \times A$; and a \emph{one-stage reward} $r:S\times A\to\mathbb{R}$.
\end{definition}

At each time step $t$, the agent observes the current state $X_t$ and selects an action $A_t \in A$; it receives the reward $r(X_t, A_t)$, and the system moves to $X_{t+1}\sim P(\cdot \mid X_t, A_t)$.

A \emph{stationary Markov policy} is a map $\pi:S\to\Delta(A)$. The policy is \emph{deterministic} if every $\pi(\cdot\mid x)$ is a Dirac mass and \emph{stochastic} otherwise, and $\Pi$ denotes the set of all stationary Markov policies. For the criteria considered in this paper, restricting attention to stationary Markov policies entails no loss of optimality \citep{HeLa96,Put94}. Under $\pi\in\Pi$ the state process is a time-homogeneous Markov chain with the policy-averaged kernel and reward
\begin{equation}
\label{eq:induced}
P^\pi(y\mid x)
:=
\sum_{a\in A}\pi(a\mid x)\,P(y\mid x,a),
\qquad
r^\pi(x)
:=
\sum_{a\in A}\pi(a\mid x)\,r(x,a).
\end{equation}

\subsection{The average-reward criterion and its optimality equation}
\label{sec:acoe}

Let $(X_t, A_t)_{t \ge 0}$ be the state--action process generated by $\pi\in\Pi$ from $X_0 = x$, and let $\mathbb{E}_x^\pi$ denote the corresponding expectation. The \emph{long-run average reward} of $\pi$ is
\[
\rho^\pi(x)
:=
\liminf_{N\to\infty}
\frac{1}{N}
\E_x^\pi\!\left[\sum_{t=0}^{N-1}r(X_t,A_t)\right],
\]
and a policy $\pi^{*}$ is \emph{average-reward optimal} if $\rho^{\pi^*}(x)=\rho^*(x):=\sup_{\pi\in\Pi}\rho^\pi(x)$ for every $x\in S$. Under the minorization condition introduced below, which is assumed throughout, the controlled chain is ergodic under every policy, the $\liminf$ is a limit, and $\rho^\pi(x)$ does not depend on $x$.

The optimal average reward and an optimal policy are characterized by the average-reward optimality equation (AROE), which we write directly in terms of Q-functions: a pair $(\rho,Q)\in\mathbb{R}\times\mathbb{R}^{S\times A}$ solves the \emph{Q-AROE} if
\begin{equation}\label{eq:Q_ACOE}
\rho+Q(x,a)
=
r(x,a)+\sum_{y\in S}P(y\mid x,a) \, \max_{b\in A}Q(y,b),
\qquad (x,a)\in S\times A .
\end{equation}
Setting $h(x):=\max_{a\in A}Q(x,a)-\rho$, \eqref{eq:Q_ACOE} is equivalent to the familiar bias form $\rho+h(x)=\max_{a\in A}\{r(x,a)+\sum_yP(y\mid x,a)h(y)\}$ of the AROE, with $Q(x,a)=r(x,a)+\sum_yP(y\mid x,a)h(y)$. If $(\rho,Q)$ solves~\eqref{eq:Q_ACOE}, then $\rho=\rho^*(x)$ for every $x\in S$ and every stationary policy that selects at each $x$ an action in $\arg\max_{a\in A}Q(x,a)$ is average-reward optimal; see \citet[Theorem~5.2.4]{HeLa96} and \citet[Chapter~8]{Put94}. The scalar $\rho$ is therefore unique, whereas $Q$ is unique only up to an additive constant: since $\sum_yP(y\mid x,a)=1$, the pair $(\rho,Q+c\1)$ solves~\eqref{eq:Q_ACOE} whenever $(\rho,Q)$ does. This is why the span seminorm is the natural metric for~\eqref{eq:Q_ACOE}. Working with Q-functions rather than with the bias $h$ is also what makes the theory compatible with learning: the optimal policy $\arg\max_aQ(x,a)$ needs no knowledge of $P$ at decision time, and the sample-based version of the right-hand side of~\eqref{eq:Q_ACOE} requires no maximization inside the expectation.

\subsection{Minorization and the span-seminorm contraction}
\label{sec:span}

The following condition is a standard tool for the geometric ergodicity of Markov chains and for average-reward control \citep[Section~3.3]{Her89} and is natural for finite or compact state spaces, and has been used for average-cost mean-field games with compact state spaces \citep[Assumption~A.3]{Wie19}.

\begin{assumption}[Minorization condition]
\label{def:minor}
The controlled transition kernel satisfies a \emph{minorization condition} if
there exists a sub-probability measure $\nu$ on $S$ with total mass
$
\varepsilon:=\sum_{y\in S}\nu(y)\in(0,1)
$
such that
$$P(y\mid x,a)\ge\nu(y)$$
for all $x\in S$, $a\in A$,
$y\in S$.
The \emph{residual kernel}
$\widehat{P}(y\mid x,a):=P(y\mid x,a)-\nu(y)\ge 0$ then satisfies
$\sum_y\widehat{P}(y\mid x,a)=1-\varepsilon \in (0,1)$.
\end{assumption}

Under Assumption~\ref{def:minor} there are two ways to solve~\eqref{eq:Q_ACOE} by a contraction argument \citep[Section~5.5]{HeLa96}. The operator obtained by replacing $P$ with the residual kernel $\widehat P$ in the right-hand side of~\eqref{eq:Q_ACOE} is a contraction in the sup-norm with modulus $1-\varepsilon$, but it involves the minorizing measure $\nu$ explicitly, both in the operator and in the formula $\rho=\sum_y\nu(y)\max_bQ(y,b)$ for the optimal average reward; this is unavailable when the model is unknown. The operator built on the full kernel $P$ is a contraction only in the span seminorm, but it uses $P$ and $r$ alone. Since our ultimate goal is learning, we follow the second route; the measure $\nu$ enters the proofs but never the algorithms.

\begin{definition}[Q-operator]
\label{def:Q_operators}
For $Q:S\times A\to\mathbb{R}$ and $(x,a)\in S\times A$, we define
\begin{equation}
(LQ)(x,a)
:=r(x,a)+\sum_{y\in S}P(y\mid x,a)\, \max_{b\in A}Q(y,b).
\label{eq:L_op}
\end{equation}
\end{definition}

Note that $L(Q+c\1)=LQ+c\1$ for every $c\in\R$, and that the Q-AROE~\eqref{eq:Q_ACOE} reads $\rho\1+Q=LQ$.

\begin{theorem}[$L$ is a contraction in $\spn(\cdot)$]
\label{thm:Lhat_span}
Under Assumption~\ref{def:minor}, for all $Q,Q':S\times A\to\mathbb{R}$,
\[
\spn\!\left(LQ-LQ'\right)
\le(1-\varepsilon)\,\spn(Q-Q').
\]
\end{theorem}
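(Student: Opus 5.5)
The plan is to reduce the claim to a difference of two state-action pairs and then exploit the common lower bound $\nu$ shared by all rows of $P$. Write $D:=Q-Q'$ and define the state functions
\[
g(y):=\max_{b\in A}Q(y,b)-\max_{b\in A}Q'(y,b),\qquad y\in S .
\]
Then $(LQ-LQ')(x,a)=\sum_y P(y\mid x,a)\,g(y)$, because the rewards cancel. The first step is the elementary bound
\[
\min_{(z,b)}D(z,b)\;\le\; g(y)\;\le\; \max_{(z,b)}D(z,b).
\]
For the upper bound, pick $b^*\in\arg\max_b Q(y,b)$; then $g(y)\le Q(y,b^*)-Q'(y,b^*)$. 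The lower bound is symmetric, using a maximizer of $Q'(y,\cdot)$. Consequently $\spn(g)\le\spn(D)$.

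The second step is the span contraction of the kernel under minorization. Fix two pairs $(x,a)$ and $(x',a')$. The difference
\[
(LQ-LQ')(x,a)-(LQ-LQ')(x',a')=\sum_y\bigl[P(y\mid x,a)-P(y\mid x',a')\bigr]g(y)
\]
can be rewritten with the residual kernel, since the $\nu$ terms cancel:
\[
\sum_y\widehat P(y\mid x,a)\,g(y)-\sum_y\widehat P(y\mid x',a')\,g(y).
\]
Both residual rows are nonnegative with total mass $1-\varepsilon$. Hence the first sum is at most $(1-\varepsilon)\max g$ and the second is at least $(1-\varepsilon)\min g$. The difference is therefore at most $(1-\varepsilon)\spn(g)$.

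Finally, take the maximum over both pairs to obtain $\spn(LQ-LQ')\le(1-\varepsilon)\spn(g)\le(1-\varepsilon)\spn(Q-Q')$. The only delicate point is the first step: the max-difference bound must be stated with a minimum and a maximum of $D$, not an absolute value, so that it controls the span rather than the sup-norm. The rest is routine.
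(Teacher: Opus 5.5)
Your proof is correct and follows the paper's argument. You use the same function $g$ (the paper's $\Phi_{\max}$), cancel the $\nu$-part via $P=\widehat P+\nu$ so that the nonnegative residual rows of mass $1-\varepsilon$ give the factor $(1-\varepsilon)\spn(g)$, and bound $\spn(g)\le\spn(Q-Q')$ by the same two-sided max-difference inequality; the only differences are that you write out the pairwise comparison explicitly and present the two steps in the opposite order.
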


The proof is given in Appendix~\ref{app:thm:Lhat_span}.

We now explain how a solution of~\eqref{eq:Q_ACOE} is obtained from this theorem. Fix any $Q_0:S\times A \to \mathbb{R}$ and define the iterates
$Q_{n+1}:=LQ_n$. Since $L$ is invariant under additive constants, the sequence $(Q_n)$ may drift by constants, but the successive
differences contract in the span seminorm:
$\spn(Q_{n+1}-Q_n)\le(1-\varepsilon)^n\,\spn(Q_1-Q_0)\to 0$,
so that $Q_{n+1}(x,a)-Q_n(x,a)$ converges to a constant $\rho^{*}$
independently of $(x,a)$ and of $Q_0$. To state this precisely, fix $(x_0,a_0)\in S\times A$ and define
\[
\mathbb{R}^{S\times A}_{(x_0,a_0)}
:=
\{Q:S\times A\to\mathbb{R}: Q(x_0,a_0)=0\},
\]
a complete metric space
under the span seminorm (on which $\spn(\cdot)$ is a norm equivalent to $\|\cdot\|_\infty$), and the normalized operator
$\widetilde{L}Q(x,a):=LQ(x,a)-LQ(x_0,a_0)$.

\begin{corollary}[Solution to Q-AROE via $L$: relative value iteration]
\label{cor:Lhat_banach}
There exists a unique fixed point $Q^*$ of $\widetilde{L}$ in $\mathbb{R}^{S\times A}_{(x_0,a_0)}$. Hence, the pair $(\rho^*,Q^*)$ solves the Q-AROE,
where
$\rho^* := LQ^*(x_0,a_0)$.
Moreover, the optimal value $\rho^*$ can be obtained as the limit of
iterate differences:
\[
\rho^* = \lim_{n\to\infty}
\big(L^{n+1}Q_0(x,a)-L^{n}Q_0(x,a)\big)
\]
for any initial $Q_0$ and any $(x,a)\in S\times A$.
\end{corollary}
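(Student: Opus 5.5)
The plan is to apply the Banach fixed-point theorem to $\widetilde L$ on the complete space $\mathbb{R}^{S\times A}_{(x_0,a_0)}$ with the span seminorm, using Theorem~\ref{thm:Lhat_span}.

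\emph{Self-map and contraction.} First check that $\widetilde L$ maps the space into itself: $\widetilde LQ(x_0,a_0)=LQ(x_0,a_0)-LQ(x_0,a_0)=0$. Since $\widetilde LQ$ differs from $LQ$ only by the constant $LQ(x_0,a_0)\1$, and $\spn$ is invariant under constants, Theorem~\ref{thm:Lhat_span} gives
\[
\spn(\widetilde LQ-\widetilde LQ')=\spn(LQ-LQ')\le(1-\varepsilon)\spn(Q-Q').
\]
On this subspace $\spn$ is a genuine norm: if $\spn(Q-Q')=0$ then $Q-Q'$ is constant, and that constant vanishes at $(x_0,a_0)$. Completeness holds as stated in the text, because $\spn$ is equivalent to $\|\cdot\|_\infty$ on this finite-dimensional subspace. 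Banach's theorem then yields a unique fixed point $Q^*$.

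\emph{The fixed point solves the Q-AROE.} The equation $\widetilde LQ^*=Q^*$ reads $LQ^*=Q^*+LQ^*(x_0,a_0)\1$, which is exactly $\rho^*\1+Q^*=LQ^*$ with $\rho^*:=LQ^*(x_0,a_0)$. By the characterization recalled after \eqref{eq:Q_ACOE}, this $\rho^*$ equals the optimal average reward.

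\emph{Limit of iterate differences.} Fix $Q_0$, write $Q_n=L^nQ_0$, and set $D_n:=Q_n-Q^*-c_n\1$ with $c_n:=Q_n(x_0,a_0)-Q^*(x_0,a_0)$ chosen so that $D_n(x_0,a_0)=0$. Using $L(Q+c\1)=LQ+c\1$, Theorem~\ref{thm:Lhat_span}, and the fact that $LQ^*=Q^*+\rho^*\1$,
\[
\spn\bigl(Q_{n+1}-Q^*-\rho^*\1-c_n\1\bigr)=\spn(LQ_n-LQ^*)\le(1-\varepsilon)^{n}\spn(Q_1-LQ^*\cdots),
\]
so that $\spn(Q_n-Q^*)\le(1-\varepsilon)^n\spn(Q_0-Q^*)\to0$. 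Hence $Q_n-Q^*=c_n\1+e_n$ with $\|e_n\|_\infty\le\spn(e_n)\to0$, where $e_n$ is normalized to vanish at $(x_0,a_0)$.

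Then
\[
Q_{n+1}-Q_n=L Q_n-Q_n=LQ^*+c_n\1+(LQ_n-LQ^*-c_n\1)-Q^*-c_n\1-e_n=\rho^*\1+(L(Q^*+e_n)-LQ^*)-e_n.
\]
The map $L$ is sup-norm nonexpansive, because $P$ is stochastic and $\max$ is $1$-Lipschitz. Therefore $\|Q_{n+1}-Q_n-\rho^*\1\|_\infty\le2\|e_n\|_\infty\to0$, pointwise for every $(x,a)$.

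The only delicate point is the bookkeeping of the drifting constants $c_n$, which the translation-equivariance $L(Q+c\1)=LQ+c\1$ handles cleanly. The conclusion is independent of $Q_0$ because $\rho^*$ is determined by the fixed point alone.
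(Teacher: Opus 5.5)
Your proof is correct and follows the paper's route: Banach's theorem for $\widetilde L$ on $\mathbb{R}^{S\times A}_{(x_0,a_0)}$ gives the fixed point. For the iterate differences, your $Q^*+e_n$ is, for $n\ge1$, exactly the paper's normalized iterate $\widetilde L^nQ_0=L^nQ_0-L^nQ_0(x_0,a_0)\1$, so that $L^{n+1}Q_0-L^nQ_0=L\widetilde L^nQ_0-\widetilde L^nQ_0\to LQ^*-Q^*=\rho^*\1$.

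The only difference is that the paper uses continuity of $L$ at $(x_0,a_0)$ together with $\spn(\Delta_n)\to0$, whereas you use sup-norm nonexpansiveness of $L$; this gives uniform convergence directly, with the explicit rate $2(1-\varepsilon)^n\spn(Q_0-Q^*)$. Your garbled display with ``$\cdots$'' should simply read $\spn(Q_{n+1}-Q^*)=\spn(LQ_n-LQ^*)\le(1-\varepsilon)\spn(Q_n-Q^*)$.
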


The proof is given in Appendix~\ref{app:cor:Lhat_banach}.

\section{Entropy-Regularized MDPs and the Soft Q-AROE}
\label{sec:entropy}

In the Q-AROE~\eqref{eq:Q_ACOE} the agent commits at each state to a single best action through a hard maximum. This is enough for optimality, but the hard maximum is discontinuous in the problem data, it leaves no room for exploration in a learning setting, and it does not provide the Lipschitz dependence of the best response on the population distribution that the fixed-point analysis of mean-field games requires. Entropy regularization addresses these three issues at once \citep{GeScPi19,NeJoGo17}: the hard maximization is replaced by a maximization over randomized actions with a bonus of $\delta$ times the Shannon entropy of the action distribution. This is a strictly concave program on the simplex whose unique solution is the softmax (Boltzmann) distribution and whose optimal value is the log-sum-exp, a smooth surrogate of the maximum. The temperature $\delta>0$ controls the trade-off between reward and randomization, and the hard maximization is recovered as $\delta\downarrow0$. In this section we introduce the resulting soft Q-AROE, show that its operator is a span-seminorm contraction at the same rate and under the same condition as in Section~\ref{sec:classical}, and derive the Lipschitz properties of the softmax that drive the analysis of Section~\ref{sec:mfg}. The proofs for this section are collected in Appendix~\ref{app:entropy}.

\subsection{The entropy-regularized problem and the softmax}
\label{sec:variational}

For a finite set $E$ and $\pi\in\Delta(E)$, the \emph{Shannon entropy} is $$H(\pi):=-\sum_{a\in E}\pi(a)\log\pi(a),$$ with $0\log0:=0$; it satisfies $0\le H(\pi)\le\log|E|$, is strictly concave, and its gradient diverges at the boundary of the simplex \citep{CoTh06}. Let $\delta>0$ be the regularization parameter (temperature). The entropy-regularized one-stage reward of a policy $\pi\in\Pi$ at state $x$ is $$r^\pi(x)+\delta H(\pi(\cdot\mid x))=\sum_{a\in A(x)}\pi(a\mid x)\bigl(r(x,a)-\delta\log\pi(a\mid x)\bigr),$$ and the entropy-regularized average reward of $\pi$ is
\[
\rho_\delta^\pi(x)
:=
\liminf_{N\to\infty}
\frac{1}{N}
\E_x^\pi\!\left[
\sum_{t=0}^{N-1}
\left(
r(X_t,A_t)
-
\delta\log\pi(A_t\mid X_t)
\right)
\right].
\]
The classical problem is recovered as $\delta\downarrow 0$, while for $\delta>0$ the agent is rewarded for randomizing. The building block of the theory is the following variational identity, sometimes called the Gibbs variational principle; see, e.g., \citet[Proposition~1.4.2]{DuEl97} or \citet{GaPa18}.

\begin{proposition}[Softmax best response]
\label{prop:softmax}
Let $E$ be a finite set, $\delta>0$ and $q\in\mathbb{R}^E$. The problem
\begin{equation}
\label{eq:ent_var}
\max_{\pi\in\Delta(E)}
\Bigl\{
\sum_{a\in E}\pi(a)\,q(a)
+
\delta H(\pi)
\Bigr\}
\end{equation}
has the unique maximizer
\begin{equation}\label{eq:softmax}
\sigma_\delta(q)(a)
:=
\frac{\exp(q(a)/\delta)}
     {\sum_{b\in E}\exp(q(b)/\delta)},
\qquad a\in E,
\end{equation}
and its optimal value is the log-sum-exp
$ \displaystyle
\lse_\delta(q):=\delta\log\Bigl(
\sum_{a\in E}\exp(q(a)/\delta)
\Bigr).
$
\end{proposition}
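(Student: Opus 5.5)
The plan is to prove Proposition~\ref{prop:softmax} through the Kullback--Leibler (Gibbs) inequality, which yields the optimal value, the identity of the maximizer and its uniqueness in one argument, without Lagrange multipliers or a separate treatment of the boundary of the simplex. Write $Z:=\sum_{b\in E}\exp(q(b)/\delta)$ and $\sigma:=\sigma_\delta(q)$. Every coordinate of $\sigma$ is strictly positive, and taking logarithms gives
\[
q(a)=\delta\log\sigma(a)+\lse_\delta(q),\qquad a\in E .
\]

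Fix an arbitrary $\pi\in\Delta(E)$ and substitute this expression for $q(a)$ into the objective of~\eqref{eq:ent_var}. Using $\sum_a\pi(a)=1$ and the convention $0\log 0=0$, the objective becomes
\[
\sum_{a}\pi(a)q(a)+\delta H(\pi)
=\lse_\delta(q)+\delta\sum_{a}\pi(a)\log\sigma(a)-\delta\sum_a\pi(a)\log\pi(a)
=\lse_\delta(q)-\delta\,\mathrm{KL}(\pi\,\|\,\sigma),
\]
where $\mathrm{KL}(\pi\|\sigma)=\sum_{a:\pi(a)>0}\pi(a)\log\bigl(\pi(a)/\sigma(a)\bigr)$. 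This relative entropy is finite because $\sigma>0$ everywhere. Only terms with $\pi(a)>0$ contribute, so the boundary of the simplex needs no special care.

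It remains to invoke Gibbs' inequality: $\mathrm{KL}(\pi\|\sigma)\ge0$, with equality if and only if $\pi=\sigma$. I would prove this briefly by applying Jensen's inequality to the strictly concave function $\log$ over the support of $\pi$:
\[
-\mathrm{KL}(\pi\|\sigma)=\sum_{\pi(a)>0}\pi(a)\log\frac{\sigma(a)}{\pi(a)}\le\log\sum_{\pi(a)>0}\sigma(a)\le0 .
\]
If equality holds throughout, strict concavity forces $\sigma/\pi$ to be constant on the support of $\pi$, and the second inequality forces that support to be all of $E$. Together with normalization this gives $\pi=\sigma$.

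Combining the two steps, the objective is at most $\lse_\delta(q)$ for every $\pi$, with equality exactly at $\pi=\sigma_\delta(q)$. This establishes both the unique maximizer~\eqref{eq:softmax} and the optimal value. The only delicate point is the equality case of Jensen's inequality when $\pi$ lies on the boundary of the simplex, which the support argument above handles.
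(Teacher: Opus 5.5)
Your proof is correct and complete. The identity $\sum_a\pi(a)q(a)+\delta H(\pi)=\lse_\delta(q)-\delta\,\mathrm{KL}(\pi\,\|\,\sigma_\delta(q))$, together with the support-based equality case of Gibbs' inequality, yields the optimal value, the maximizer and its uniqueness, and it handles the boundary of the simplex properly.

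The paper gives no proof of its own. It cites the Gibbs variational principle \citep[Proposition~1.4.2]{DuEl97}, whose standard proof is exactly this relative-entropy rewriting, so your argument coincides with the intended one. As a bonus, it shows that the gap $d(x)$ in the proof of Corollary~\ref{cor:MFNE} equals $\delta\,\mathrm{KL}\bigl(\pi(\cdot\mid x)\,\|\,\pi^*_m(\cdot\mid x)\bigr)$.
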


\begin{remark}[Properties of the log-sum-exp]
\label{rem:lse}
The map $q\mapsto\lse_\delta(q)$ is sometimes itself called the ``softmax,'' though we reserve that name for the distribution~\eqref{eq:softmax}. It is a smooth convex function whose gradient is $\sigma_\delta$ (Lemma~\ref{lem:hessian_lse} below), it is monotone in $q$, it is shift-equivariant, $\lse_\delta(q+c\1)=\lse_\delta(q)+c$ for $c\in\R$, while $\sigma_\delta(q+c\1)=\sigma_\delta(q)$, and it satisfies the two-sided bound
\[
\max_{a}q(a)\le\lse_\delta(q)\le\max_{a}q(a)+\delta\log|E| .
\]
Hence $\lse_\delta$ converges to the hard maximum uniformly as $\delta\downarrow 0$.
\end{remark}

\subsection{The soft Q-AROE and its span-seminorm solution}
\label{sec:soft_bellman_eqn}

Replacing the hard maximum in the Q-AROE~\eqref{eq:Q_ACOE} by the entropy-regularized maximization~\eqref{eq:ent_var} over $\pi(\cdot\mid y)\in\Delta(A(y))$ gives, by Proposition~\ref{prop:softmax}, the \emph{soft Q-AROE}
\begin{equation}\label{eq:soft_Q_AROE}
\rho+Q(x,a)
=r(x,a)+\sum_{y\in S}P(y\mid x,a)\,\delta\log\!\sum_{b\in A}\exp\!\left(\frac{Q(y,b)}{\delta}\right),
\qquad (x,a)\in S\times A,
\end{equation}
with the associated stochastic policy
\begin{equation}\label{eq:soft_policy}
\pi^{*}_\delta(a\mid y)
=
\sigma_\delta\bigl(Q(y,\cdot)\bigr)(a)=
\frac{\exp(Q(y,a)/\delta)}
     {\sum_{b\in A}\exp(Q(y,b)/\delta)} .
\end{equation}
Repeating the classical verification argument with the one-stage reward $r(x,a)-\delta\log\pi(a\mid x)$ and using Proposition~\ref{prop:softmax} shows that if $(\rho_\delta^*,Q_\delta^*)$ solves~\eqref{eq:soft_Q_AROE}, then $\rho_\delta^*=\sup_{\pi\in\Pi}\rho^\pi_\delta(x)$ for every $x\in S$ and the softmax policy~\eqref{eq:soft_policy} built on $Q^*_\delta$ is optimal for the entropy-regularized problem; see also \citet{GeScPi19,NeJoGo17}. Compared with~\eqref{eq:Q_ACOE}, the only change is that the hard $\max_b$ has been replaced by the smooth $\delta\log\sum_b\exp(\cdot/\delta)$, so that the right-hand side is now an analytic function of $Q$ and the optimal policy is given in closed form. As in the classical case, $Q^*_\delta$ is determined only up to an additive constant, which does not affect the policy~\eqref{eq:soft_policy} since the softmax is shift-invariant. The contraction argument of Section~\ref{sec:classical} carries over verbatim, because the log-sum-exp inherits the key property of the hard maximum that drives it.

\begin{lemma}[Log-sum-exp is $1$-Lipschitz in sup-norm]
\label{lem:lse_lip}
Let $E$ be a finite set. For any $u,v\in\mathbb{R}^E$, $\delta>0$,
\[
\left|\delta\log\sum_{a\in E}\exp(u(a)/\delta)
-\delta\log\sum_{a\in E}\exp(v(a)/\delta)\right|
\le\max_{a\in E}|u(a)-v(a)|.
\]
\end{lemma}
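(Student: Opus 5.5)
The plan is to prove the bound with two one-line comparisons, using only the monotonicity and shift-equivariance of $\lse_\delta$ recorded in Remark~\ref{rem:lse}. Set $c:=\max_{a\in E}|u(a)-v(a)|$, so that pointwise $v-c\1\le u\le v+c\1$.

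The first step checks monotonicity. If $u\le w$ componentwise, then $\exp(u(a)/\delta)\le\exp(w(a)/\delta)$ for every $a$. Summing over $a$ and applying the increasing map $t\mapsto\delta\log t$ gives $\lse_\delta(u)\le\lse_\delta(w)$.

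The second step checks shift-equivariance. We have
\[
\sum_a\exp\bigl((w(a)+c)/\delta\bigr)=e^{c/\delta}\sum_a\exp(w(a)/\delta),
\]
so that $\lse_\delta(w+c\1)=\lse_\delta(w)+c$.

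The third step combines the two. Applying monotonicity to $u\le v+c\1$ and then shift-equivariance yields $\lse_\delta(u)\le\lse_\delta(v)+c$. Exchanging the roles of $u$ and $v$ gives $\lse_\delta(v)\le\lse_\delta(u)+c$. Together these two inequalities are exactly the claimed estimate.

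There is no genuine obstacle here. The only point to watch is that the constant $c$ factors out of the sum exactly as $e^{c/\delta}$, which is what makes the resulting bound independent of $\delta$.

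An alternative route goes through the mean value theorem. Lemma~\ref{lem:hessian_lse} identifies $\sigma_\delta$ as the gradient of $\lse_\delta$, and a gradient lying in the simplex has $\ell^1$-norm equal to $1$. By Hölder's inequality this gives the same Lipschitz bound with respect to the sup-norm. I would present the elementary argument above, since it does not rely on results stated later in the paper.
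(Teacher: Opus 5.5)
Your proof is correct and is essentially the paper's argument. The paper starts from the same sandwich $v(a)-M\le u(a)\le v(a)+M$, exponentiates, factors $e^{\pm M/\delta}$ out of the sum and applies $\delta\log$. That is your monotonicity-plus-shift-equivariance step, carried out inline for both directions at once.
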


The proof is given in Appendix~\ref{app:lem:lse_lip}.

\begin{definition}[Soft Q-operator]
\label{def:soft_Q_operators}
For $Q:S\times A\to\mathbb{R}$ and $(x,a)\in S\times A$,
\begin{equation*}
(L_\delta Q)(x,a)
:=r(x,a)+\sum_{y\in S}\delta\log\!\sum_{b\in A}\exp(Q(y,b)/\delta)\,P(y\mid x,a) .
\label{eq:L_delta}
\end{equation*}
In the notation of Proposition~\ref{prop:softmax}, $(L_\delta Q)(x,a)= r(x,a)+\sum_{y\in S} P(y\mid x,a)\,\lse_\delta\bigl(Q(y,\cdot)\bigr)$.
\end{definition}

Again $L_\delta(Q+c\1)=L_\delta Q+c\1$ for every $c\in\R$, and the soft Q-AROE reads $\rho\1+Q=L_\delta Q$.

\begin{theorem}[$L_\delta$ is a span-seminorm contraction]
\label{thm:Lhat_delta_span}
Under Assumption~\ref{def:minor}, %
\[
\spn\!\left(L_\delta Q-L_\delta Q'\right)
\le(1-\varepsilon)\,\spn(Q-Q'), \quad Q,Q':S\times A\to\mathbb{R}.
\]
\end{theorem}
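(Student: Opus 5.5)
The plan is to reproduce the classical span-seminorm argument for $L$ (Theorem~\ref{thm:Lhat_span}) with the hard maximum replaced by the log-sum-exp. The only property of the maximum that argument uses is that $y\mapsto \max_b Q(y,b)-\max_b Q'(y,b)$ has span at most $\spn(Q-Q')$. So the main work is to establish the analogous statement for $\lse_\delta$.

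Fix $Q,Q'$ and set $g(y):=\lse_\delta(Q(y,\cdot))-\lse_\delta(Q'(y,\cdot))$. Then
\[
(L_\delta Q-L_\delta Q')(x,a)=\sum_{y\in S}P(y\mid x,a)\,g(y),
\]
because the reward cancels. Let $m:=\min(Q-Q')$ and $M:=\max(Q-Q')$, so $M-m=\spn(Q-Q')$. From $Q'+m\1\le Q\le Q'+M\1$, monotonicity of $\lse_\delta$ and shift-equivariance (Remark~\ref{rem:lse}) give $m\le g(y)\le M$ for every $y$. Hence $\spn(g)\le\spn(Q-Q')$. This is the key step: it is stronger than what Lemma~\ref{lem:lse_lip} alone gives, namely $\|g\|_\infty\le\|Q-Q'\|_\infty$, which would only yield $\spn(g)\le 2\|Q-Q'\|_\infty$. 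I would use this one-sided sandwich, or equivalently apply Lemma~\ref{lem:lse_lip} to $Q-c\1$ with $c=(M+m)/2$.

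The remaining step is the standard minorization estimate for the span of a kernel average. For any two pairs $(x,a)$ and $(x',a')$, write $P(\cdot\mid x,a)=\nu+\widehat P(\cdot\mid x,a)$ and likewise for $(x',a')$. The $\nu$-terms cancel in the difference, leaving
\[
\sum_y \widehat P(y\mid x,a)g(y)-\sum_y\widehat P(y\mid x',a')g(y).
\]
Both residual kernels are nonnegative with total mass $1-\varepsilon$. The first sum is therefore at most $(1-\varepsilon)\max g$ and the second at least $(1-\varepsilon)\min g$, so the difference is at most $(1-\varepsilon)\spn(g)$. Taking the maximum over the two pairs gives
\[
\spn(L_\delta Q-L_\delta Q')\le(1-\varepsilon)\spn(g)\le(1-\varepsilon)\spn(Q-Q').
\]

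I expect the only potential obstacle to be the first step: one must bound the span of $g$ by the span of $Q-Q'$ rather than by a sup-norm, and this is where the monotonicity and shift-equivariance of $\lse_\delta$ are essential. Everything else is identical to the unregularized proof. In particular, $\nu$ enters only the estimate, not the operator.
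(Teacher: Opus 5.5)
Your proposal is correct and follows essentially the same argument as the paper: the paper also splits $P=\widehat P+\nu$, cancels the $\nu$-term, uses the residual mass $1-\varepsilon$ to get $(1-\varepsilon)\spn(g)$, and bounds $\spn(g)\le\spn(Q-Q')$ by the monotonicity/shift-equivariance sandwich for $\lse_\delta$. The only difference is cosmetic: the paper sandwiches each $g(y)$ between $\min_b$ and $\max_b$ of $(Q-Q')(y,\cdot)$ before taking extremes over $y$, whereas you use the global $\min$ and $\max$ directly.
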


The proof is given in Appendix~\ref{app:thm:Lhat_delta_span}.

Recall that $\mathbb{R}^{S\times A}_{(x_0,a_0)}$ denotes the set of functions
$Q:S\times A\to\mathbb{R}$ such that
$Q(x_0,a_0)=0$ for a fixed $(x_0,a_0)\in S\times A$, a complete metric space with respect to the span seminorm, and define the normalized operator
$\widetilde{L}_{\delta}Q(x,a)
:=
L_{\delta}Q(x,a)
-
L_{\delta}Q(x_0,a_0)$.

\begin{corollary}[Solution to the soft Q-AROE via $L_\delta$: soft relative value iteration]
\label{cor:Lhat_soft_banach}
There exists a unique fixed point $Q_\delta^*$ of $\widetilde{L}_\delta$
in $\mathbb{R}^{S\times A}_{(x_0,a_0)}$. Consequently, the pair
$(\rho_\delta^*,Q_\delta^*)$ solves the soft Q-AROE, where
$
\rho_\delta^*
:=
L_\delta Q_\delta^*(x_0,a_0).
$
Moreover, the optimal average reward can be obtained from the iterate differences:
\[
\rho_\delta^*
=
\lim_{n\to\infty}
\left(
L_\delta^{\,n+1}Q_0(x,a)
-
L_\delta^{\,n}Q_0(x,a)
\right)
\]
for every initial condition $Q_0$ and every $(x,a)\in S\times A$.
\end{corollary}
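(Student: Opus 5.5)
The plan is to apply the Banach fixed-point theorem to $\widetilde{L}_\delta$ on $\mathbb{R}^{S\times A}_{(x_0,a_0)}$, using Theorem~\ref{thm:Lhat_delta_span} together with the shift-equivariance $L_\delta(Q+c\1)=L_\delta Q+c\1$. This mirrors the proof of Corollary~\ref{cor:Lhat_banach}.

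\emph{Step 1: $\widetilde{L}_\delta$ is a contraction on $\mathbb{R}^{S\times A}_{(x_0,a_0)}$.} By construction, $\widetilde{L}_\delta Q(x_0,a_0)=0$, so $\widetilde{L}_\delta$ maps this space into itself. Moreover, $\widetilde{L}_\delta Q-\widetilde{L}_\delta Q'$ differs from $L_\delta Q-L_\delta Q'$ only by the constant $L_\delta Q'(x_0,a_0)-L_\delta Q(x_0,a_0)$. Since $\spn$ is invariant under adding constants, Theorem~\ref{thm:Lhat_delta_span} gives
\[
\spn(\widetilde{L}_\delta Q-\widetilde{L}_\delta Q')\le(1-\varepsilon)\spn(Q-Q').
\]
The space is complete under $\spn$, which is a norm there: if $Q(x_0,a_0)=0$ then $\|Q\|_\infty\le\spn(Q)\le2\|Q\|_\infty$. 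Banach's theorem then yields a unique fixed point $Q^*_\delta$. Writing out $\widetilde{L}_\delta Q^*_\delta=Q^*_\delta$ gives $L_\delta Q^*_\delta=Q^*_\delta+L_\delta Q^*_\delta(x_0,a_0)\1$. With $\rho^*_\delta:=L_\delta Q^*_\delta(x_0,a_0)$, this is exactly the soft Q-AROE $\rho^*_\delta\1+Q^*_\delta=L_\delta Q^*_\delta$.

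\emph{Step 2: the iterate differences converge to $\rho^*_\delta$.} Fix any $Q_0$ and put $Q_n:=L_\delta^nQ_0$. By shift-equivariance and the fixed-point relation, $L_\delta(Q^*_\delta+c\1)=Q^*_\delta+(c+\rho^*_\delta)\1$. Hence $D_n:=Q_n-Q^*_\delta-n\rho^*_\delta\1$ satisfies
\[
D_{n+1}=L_\delta Q_n-L_\delta(Q^*_\delta+n\rho^*_\delta\1),
\]
so Theorem~\ref{thm:Lhat_delta_span} gives $\spn(D_{n+1})\le(1-\varepsilon)\spn(D_n)$ and therefore $\spn(D_n)\le(1-\varepsilon)^n\spn(Q_0-Q^*_\delta)$.

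The main obstacle is that a span bound alone does not control $D_n$ pointwise, since the iterates may drift by constants. What we need is that $Q_{n+1}-Q_n-\rho^*_\delta\1=D_{n+1}-D_n$ tends to zero in sup-norm, not merely in span.

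The route I would try first uses monotonicity. Both $L_\delta$ and $\lse_\delta$ are monotone, which gives
\[
\min D_n\le\min D_{n+1}\le\max D_{n+1}\le\max D_n.
\]
Indeed, $Q_n\le Q^*_\delta+n\rho^*_\delta\1+(\max D_n)\1$, and applying $L_\delta$ gives $Q_{n+1}\le Q^*_\delta+(n+1)\rho^*_\delta\1+(\max D_n)\1$; the lower bound is symmetric. Thus $\max D_n$ is nonincreasing, $\min D_n$ is nondecreasing, and their gap $\spn(D_n)$ tends to $0$. Both therefore converge to a common constant $c$, so $D_n\to c\1$ uniformly. It follows that
\[
Q_{n+1}(x,a)-Q_n(x,a)=\rho^*_\delta+D_{n+1}(x,a)-D_n(x,a)\to\rho^*_\delta
\]
for every $(x,a)$, and the convergence is geometric, with rate $(1-\varepsilon)^n$.

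Finally, Step 2 shows that the limit of the iterate differences does not depend on $Q_0$ or on the normalization point $(x_0,a_0)$. This confirms that $\rho^*_\delta$ is well defined.
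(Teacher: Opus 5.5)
Your proof is correct. Step~1 is the paper's argument, but Step~2 takes a different route.

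\textbf{The paper's route.} The paper never looks at the drift-corrected error. It uses the identity $\widetilde{L}_\delta^{\,n}Q_0=L_\delta^{\,n}Q_0-L_\delta^{\,n}Q_0(x_0,a_0)\1$, so that $L_\delta\widetilde{L}_\delta^{\,n}Q_0(x_0,a_0)=L_\delta^{\,n+1}Q_0(x_0,a_0)-L_\delta^{\,n}Q_0(x_0,a_0)$. It then passes to the limit using the Banach convergence $\widetilde{L}_\delta^{\,n}Q_0\to Q_\delta^*$ and the continuity of $L_\delta$, which identifies the limit at $(x_0,a_0)$ as $L_\delta Q^*_\delta(x_0,a_0)=\rho^*_\delta$. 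Finally it transfers the limit to every $(x,a)$ via $\spn(L_\delta^{\,n+1}Q_0-L_\delta^{\,n}Q_0)\le(1-\varepsilon)^n\spn(L_\delta Q_0-Q_0)$.

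\textbf{Your route.} You track $D_n=L_\delta^{\,n}Q_0-Q^*_\delta-n\rho^*_\delta\1$ and resolve the constant-drift issue with the monotonicity of $L_\delta$, which the paper's proof does not use.

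\textbf{What each buys.} Your route gives more information. The nested intervals $[\min D_{n+1},\max D_{n+1}]\subseteq[\min D_n,\max D_n]$ yield the explicit uniform bound $|L_\delta^{\,n+1}Q_0(x,a)-L_\delta^{\,n}Q_0(x,a)-\rho^*_\delta|\le(1-\varepsilon)^n\spn(Q_0-Q^*_\delta)$. They also show that $L_\delta^{\,n}Q_0-n\rho^*_\delta\1$ converges to $Q^*_\delta+c\1$, i.e.\ un-normalized soft value iteration converges modulo its linear drift. The paper only asserts the limit, without a rate. Its advantage is economy: it reuses the Banach iteration just constructed and needs only continuity of $L_\delta$.

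\textbf{A side remark.} The obstacle you flag does not actually require monotonicity. Write $D_n=c_n\1+R_n$ with $\|R_n\|_\infty=\spn(D_n)/2$. Shift-equivariance and the sup-norm nonexpansiveness of $L_\delta$ (from Lemma~\ref{lem:lse_lip}) then give directly $\|D_{n+1}-D_n\|_\infty=\|L_\delta(Q^*_\delta+R_n)-L_\delta Q^*_\delta-R_n\|_\infty\le\spn(D_n)$, which is the same rate.
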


The proof is given in Appendix~\ref{app:cor:Lhat_soft_banach}.

\begin{remark}[Computational advantage]
The structure is identical to the unregularized case: same contraction factor $1-\varepsilon$, same full kernel $P$, and the same relative value iteration, with $\max_b$ replaced by $\delta\log\sum_b\exp(\cdot/\delta)$. Beyond the smoothness gained, the regularization has a computational benefit: each iteration of soft relative value iteration is a closed-form evaluation of a log-sum-exp, and the optimal policy is obtained in closed form from~\eqref{eq:soft_policy}, without any search over actions. In Section~\ref{sec:learning} the same decomposition $P=\widehat P+\nu$ that drives the proof of Theorem~\ref{thm:Lhat_delta_span} will be used, in the analysis only, to obtain finite-sample bounds for a sample-based version of $L_\delta$.
\end{remark}

\subsection{Lipschitz properties of the softmax}
\label{sec:softmax_lip}

The mean-field analysis of Section~\ref{sec:mfg} requires that the softmax policy depend Lipschitz continuously on its input vector, with an explicit constant. The map $m\mapsto\pi^*_m$ studied there is the composition of $m\mapsto Q^*_m$ (Lipschitz with a constant determined by the regularity of the model) and of the softmax (Lipschitz with constant $1/\delta$), so that the temperature controls the sensitivity of the best response and, for $\delta$ large enough, makes the mean-field operator a contraction.

We now state the result formally. Let
$\sigma_\delta:\mathbb{R}^{n}\to\Delta^{n-1}$ be the softmax map (where
$n:=|A|$ for short and $\Delta^{n-1}$ denotes the simplex in $\R^n$)
\[
\sigma_\delta(z)(a):=\frac{\exp(z(a)/\delta)}{\sum_{b=1}^n\exp(z(b)/\delta)}.
\]
By Proposition~\ref{prop:softmax}, $\sigma_\delta$ is the optimizer of
the entropy-regularized inner problem~\eqref{eq:ent_var}, and it is
also the gradient of the log-sum-exp function:
\[
\sigma_\delta(z)
=
\nabla_z\!\left[\delta\log\sum_{b=1}^n\exp(z(b)/\delta)\right].
\]
This identification---the softmax as the gradient of a convex
function---is the structural reason for its Lipschitz continuity: we compute the Hessian of the log-sum-exp, bound its operator norm, and deduce the Lipschitz bound for the gradient via the mean-value theorem. Similar results are well known \citep{GaPa18}; the short derivation is included because the $\ell_\infty\to\ell_1$ form of the bound is exactly what the mean-field analysis needs (not the $\ell_2\to\ell_2$ form as in \citep{GaPa18}).

\begin{lemma}[Hessian computation]
\label{lem:hessian_lse}
Recall $\lse_\delta(z):=\delta\log\sum_{b=1}^n\exp(z(b)/\delta)$ for
$z\in\mathbb{R}^n$ and $\delta>0$. Then $\lse_\delta$ is twice
continuously differentiable, with gradient
$\nabla\lse_\delta(z)=\sigma_\delta(z)$
and Hessian
\begin{equation}\label{eq:hessian_form}
\nabla^2\lse_\delta(z)
=
\frac{1}{\delta}\bigl(\diag(\sigma_\delta(z))
-\sigma_\delta(z)\sigma_\delta(z)^\top\bigr).
\end{equation}
\end{lemma}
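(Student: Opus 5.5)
The plan is a direct computation. Write $Z(z):=\sum_{b=1}^n\exp(z(b)/\delta)$, so that $\lse_\delta(z)=\delta\log Z(z)$. Since $Z$ is a finite sum of exponentials, it is $C^\infty$ and strictly positive. Because $\log$ is smooth on $(0,\infty)$, the composition $\lse_\delta$ is $C^\infty$, and in particular twice continuously differentiable. Regularity therefore needs no further argument, and all that remains is to identify the first and second derivatives.

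For the gradient, I differentiate with respect to $z(a)$ using the chain rule. This gives
\[
\partial_a\lse_\delta(z)=\delta\,\frac{\partial_a Z(z)}{Z(z)}=\delta\cdot\frac{\delta^{-1}\exp(z(a)/\delta)}{Z(z)}=\sigma_\delta(z)(a),
\]
so $\nabla\lse_\delta=\sigma_\delta$.

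For the Hessian, I differentiate $\sigma_\delta(z)(a)=\exp(z(a)/\delta)/Z(z)$ with respect to $z(c)$ using the quotient rule. The numerator contributes $\delta^{-1}\mathbf{1}\{a=c\}\exp(z(a)/\delta)/Z(z)=\delta^{-1}\mathbf{1}\{a=c\}\,\sigma_\delta(z)(a)$. The denominator contributes
\[
-\exp(z(a)/\delta)\,\partial_c Z(z)/Z(z)^2=-\delta^{-1}\sigma_\delta(z)(a)\,\sigma_\delta(z)(c).
\]
Adding the two terms gives the entries $\delta^{-1}\bigl(\mathbf{1}\{a=c\}\sigma_\delta(z)(a)-\sigma_\delta(z)(a)\sigma_\delta(z)(c)\bigr)$, which is exactly the matrix form in \eqref{eq:hessian_form}.

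There is no genuine obstacle. The only step needing care is the bookkeeping of the $1/\delta$ factors: the outer factor $\delta$ in $\lse_\delta$ cancels the inner $1/\delta$ at the gradient level, while at the Hessian level a single factor $1/\delta$ survives. That surviving factor is what later produces the Lipschitz constant $1/\delta$ for the softmax.

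As an optional consistency check, the Hessian should annihilate $\1$: each row sums to $\delta^{-1}\bigl(\sigma_\delta(z)(a)-\sigma_\delta(z)(a)\sum_c\sigma_\delta(z)(c)\bigr)=0$. This agrees with the shift-equivariance of $\lse_\delta$ noted in Remark~\ref{rem:lse}.
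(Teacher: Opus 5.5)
Your proposal is correct and follows essentially the same route as the paper's proof: the chain rule for the gradient, then the quotient rule for the second derivatives. You merge the paper's separate diagonal and off-diagonal cases into one computation using the indicator $\mathbf{1}\{a=c\}$, and your explicit observation that $\lse_\delta$ is $C^\infty$ (a smooth composition with $Z>0$) supplies the regularity claim that the paper's proof leaves implicit.
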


The proof is given in Appendix~\ref{app:lem:hessian_lse}.

\begin{remark}[Probabilistic interpretation of the Hessian]
The matrix $M(p):=\diag(p)-pp^\top$ is the \emph{covariance
matrix} of a vector-valued random variable distributed according to
$p\in\Delta^{n-1}$. Indeed, if $X\in\{e_1,\ldots,e_n\}$ takes value
$e_a$ with probability $p(a)$ (where $e_a$ is the $a$-th standard
basis vector), then $\mathbb{E}[X]=p$ and
$\Cov(X)=\mathbb{E}[XX^\top]-pp^\top
=\diag(p)-pp^\top=M(p)$.
Covariance matrices are always symmetric positive semi-definite, which
already gives us $\nabla^2\lse_\delta(z)\succeq 0$ for free.
This makes $\lse_\delta$ a convex function.
\end{remark}

The next step is to bound the $\ell_\infty \to \ell_1$ operator norm of the Hessian.

\begin{lemma}[$\ell_\infty \to \ell_1$ operator norm of the covariance]
\label{lem:opnorm_M_linf_l1}
For every $p\in\Delta^{n-1}$, let
$
M(p):=\diag(p)-pp^\top.
$
Then
$\|M(p)\|_{\infty\to 1}\le 1$,
where
$\|M(p)\|_{\infty\to 1}
:=
\sup_{\|v\|_\infty\le 1}\|M(p)v\|_1$.
\end{lemma}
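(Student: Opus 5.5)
Fix $p\in\Delta^{n-1}$ and $v\in\R^n$ with $\|v\|_\infty\le1$, and write $\bar v:=p^\top v=\sum_b p(b)v(b)$. The plan is to compute $M(p)v$ explicitly and then bound its $\ell^1$-norm by a one-line mean-absolute-deviation estimate. Componentwise,
\[
(M(p)v)(a)=p(a)v(a)-p(a)\sum_{b}p(b)v(b)=p(a)\bigl(v(a)-\bar v\bigr),
\]
so that
\[
\|M(p)v\|_1=\sum_{a}p(a)\,\bigl|v(a)-\bar v\bigr| .
\]
In the probabilistic language of the preceding remark, this is the mean absolute deviation of the random variable $v(X)$, where $X$ is distributed according to $p$.

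It remains to show that $\sum_a p(a)|v(a)-\bar v|\le1$. Since $v(a)\in[-1,1]$ for every $a$, the mean $\bar v$ is a convex combination of the $v(a)$ and also lies in $[-1,1]$. A crude bound $|v(a)-\bar v|\le2$ would only give the constant $2$, so I will use a sharper estimate.

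By Cauchy--Schwarz (or Jensen),
\[
\sum_a p(a)|v(a)-\bar v|\le\Bigl(\sum_a p(a)(v(a)-\bar v)^2\Bigr)^{1/2}.
\]
The variance on the right equals $\sum_a p(a)v(a)^2-\bar v^2\le 1-\bar v^2\le1$, because $v(a)^2\le1$. Taking the supremum over $\|v\|_\infty\le1$ then gives $\|M(p)\|_{\infty\to1}\le1$.

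An alternative that avoids the square root goes through positive and negative parts. The deviations $d(a):=v(a)-\bar v$ satisfy $\sum_a p(a)d(a)=0$. Hence $\sum_a p(a)|d(a)|=2\sum_{a:d(a)>0}p(a)d(a)$. Setting $q:=\sum_{d(a)>0}p(a)$ and using $d(a)\le 1-\bar v$, this sum is at most $2q(1-\bar v)$. The mean-zero condition gives the matching bound $2(1-q)(1+\bar v)$ for the same quantity. Taking the minimum of these two expressions yields at most $1$; I would keep this version only if the variance route turned out to be less clean.

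The only step I expect to need care is avoiding the lossy bound $|v(a)-\bar v|\le2$. The variance argument handles this, so there is no real obstacle, and the constant $1$ is attained by $p$ uniform on two points with $v=\pm1$ there.
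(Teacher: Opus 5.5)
Your proof is correct. It shares the paper's first step, rewriting $\|M(p)v\|_1$ as the mean absolute deviation $\sum_a p(a)|v(a)-\bar v|$, but bounds that quantity by a different argument. The paper notes that $v\mapsto\sum_a p(a)|v(a)-p^\top v|$ is convex, because each $v(a)-p^\top v$ is linear in $v$. Its maximum over the cube $\{\|v\|_\infty\le 1\}$ is therefore attained at a vertex $v\in\{-1,1\}^n$. For such $v$, with $\alpha$ the $p$-mass of $\{v=1\}$, the paper computes the value exactly as $4\alpha(1-\alpha)\le 1$.

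You instead bound the mean absolute deviation by the standard deviation (Jensen) and then use $\sum_a p(a)v(a)^2-\bar v^2\le 1-\bar v^2$. Your route is shorter and needs no extreme-point reduction. It also gives the pointwise refinement $\|M(p)v\|_1\le\sqrt{v^\top M(p)v}\le\sqrt{1-(p^\top v)^2}$, which ties neatly to the covariance interpretation.

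The paper's route, in exchange, yields the exact operator norm $\|M(p)\|_{\infty\to 1}=\max_{B}4p(B)\bigl(1-p(B)\bigr)$ over subsets $B$ of the index set. This shows that the constant $1$ is attained precisely when some subset has $p$-mass $\tfrac12$; your two-point example is one instance.

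Your positive/negative-parts alternative also works. Its last step needs one more line: the minimum of $q(1-\bar v)$ and $(1-q)(1+\bar v)$ is at most their geometric mean $\sqrt{q(1-q)(1-\bar v^2)}\le\tfrac12$.
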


The proof is given in Appendix~\ref{app:lem:opnorm_M_linf_l1}.

\begin{corollary}[$\ell_\infty\to\ell_1$ operator norm of the Hessian]
\label{cor:hessian_opnorm}
For every $z\in\mathbb{R}^n$ and $\delta>0$,
$\bigl\|\nabla^2\lse_\delta(z)\bigr\|_{\infty\to 1}
\le
1/\delta$.
\end{corollary}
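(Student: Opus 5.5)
The corollary follows by combining the explicit Hessian formula of Lemma~\ref{lem:hessian_lse} with the operator-norm bound of Lemma~\ref{lem:opnorm_M_linf_l1}. No new estimate is needed; the only task is to check that the scalar factor $1/\delta$ passes through the $\ell_\infty\to\ell_1$ norm correctly.

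First, fix $z\in\R^n$ and $\delta>0$, and set $p:=\sigma_\delta(z)$. By the definition~\eqref{eq:softmax}, $p$ has strictly positive entries that sum to one, so $p\in\Delta^{n-1}$. By~\eqref{eq:hessian_form}, the Hessian is
\[
\nabla^2\lse_\delta(z)=\frac{1}{\delta}\,M(p),
\qquad M(p)=\diag(p)-pp^\top .
\]

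Second, I will use the fact that the operator norm $\|\cdot\|_{\infty\to1}$ is absolutely homogeneous. For any $c>0$ and any matrix $B$ we have
\[
\sup_{\|v\|_\infty\le1}\|cBv\|_1=c\sup_{\|v\|_\infty\le1}\|Bv\|_1 ,
\]
because the $\ell^1$-norm is homogeneous. Applying this with $c=1/\delta$ and $B=M(p)$ gives
\[
\bigl\|\nabla^2\lse_\delta(z)\bigr\|_{\infty\to1}=\frac{1}{\delta}\,\|M(p)\|_{\infty\to1}.
\]

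Third, Lemma~\ref{lem:opnorm_M_linf_l1} holds for every $p\in\Delta^{n-1}$, and in particular for $p=\sigma_\delta(z)$. It gives $\|M(p)\|_{\infty\to1}\le1$, and therefore $\bigl\|\nabla^2\lse_\delta(z)\bigr\|_{\infty\to1}\le1/\delta$.

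None of these steps is a real obstacle, since the substantive work is already contained in the two lemmas. The only point that needs checking is that $\sigma_\delta(z)$ actually lies in the simplex, so that Lemma~\ref{lem:opnorm_M_linf_l1} applies; this is immediate from the normalization in~\eqref{eq:softmax}.
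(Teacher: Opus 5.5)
Your proposal is correct and matches the paper's proof: write $\nabla^2\lse_\delta(z)=\frac{1}{\delta}M(\sigma_\delta(z))$ by Lemma~\ref{lem:hessian_lse}, pull the factor $1/\delta$ out of the $\ell_\infty\to\ell_1$ norm, and apply Lemma~\ref{lem:opnorm_M_linf_l1} with $p=\sigma_\delta(z)\in\Delta^{n-1}$. Your explicit check of homogeneity and of membership in the simplex only spells out what the paper uses implicitly.
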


The proof is given in Appendix~\ref{app:cor:hessian_opnorm}.

We now combine the Hessian bound with the mean-value theorem to obtain
a Lipschitz estimate for $\sigma_\delta=\nabla\lse_\delta$ in
the $\ell_\infty\to\ell_1$ setting.

\begin{lemma}[Lipschitz bound in $\|\cdot\|_\infty$ and $\|\cdot\|_1$]
\label{lem:softmax_linf_l1}
For every $\delta>0$ and $z,z'\in\mathbb{R}^n$,
\[
\|\sigma_\delta(z)-\sigma_\delta(z')\|_1
\le
\frac{1}{\delta}\,\|z-z'\|_\infty .
\]
Moreover, since $\sigma_\delta(z+c\1)=\sigma_\delta(z)$ for every $c\in\R$,
\begin{equation}\label{eq:softmax_span_lip}
\|\sigma_\delta(z)-\sigma_\delta(z')\|_1
\le
\frac{1}{\delta}\,\inf_{c\in\mathbb{R}}\|z-z'-c\1\|_\infty
= \frac{\spn(z-z')}{2\delta}.
\end{equation}
\end{lemma}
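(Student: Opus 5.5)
The plan is to use the integral form of the mean-value theorem for the gradient map $\sigma_\delta=\nabla\lse_\delta$ from Lemma~\ref{lem:hessian_lse}. Since $\lse_\delta$ is twice continuously differentiable, for $z,z'\in\R^n$ and $z_t:=z'+t(z-z')$ we have
\[
\sigma_\delta(z)-\sigma_\delta(z')=\int_0^1\nabla^2\lse_\delta(z_t)\,(z-z')\,dt .
\]
Taking $\ell^1$-norms, pulling the norm inside the integral, and applying the operator-norm bound of Corollary~\ref{cor:hessian_opnorm} pointwise in $t$ gives
\[
\|\sigma_\delta(z)-\sigma_\delta(z')\|_1\le\sup_{t\in[0,1]}\|\nabla^2\lse_\delta(z_t)\|_{\infty\to1}\,\|z-z'\|_\infty\le\frac1\delta\|z-z'\|_\infty .
\]
This proves the first inequality. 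Its only ingredients are the Hessian formula and Lemma~\ref{lem:opnorm_M_linf_l1}, which are already available. Because the target is vector-valued, I will use the integral identity rather than a scalar mean-value point, which avoids the usual pitfall of the mean-value theorem for vector maps.

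For~\eqref{eq:softmax_span_lip}, I use the shift invariance $\sigma_\delta(z'+c\1)=\sigma_\delta(z')$, which is immediate from~\eqref{eq:softmax} since the factor $e^{c/\delta}$ cancels. Applying the first bound to $z$ and $z'+c\1$ gives $\|\sigma_\delta(z)-\sigma_\delta(z')\|_1\le\delta^{-1}\|z-z'-c\1\|_\infty$ for every $c\in\R$. Taking the infimum over $c$ yields the middle expression.

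The final equality is the identity $\min_{c}\|h-c\1\|_\infty=\spn(h)/2$ recorded in the Notation paragraph, applied to $h=z-z'$. To justify it, note that $c=(\max h+\min h)/2$ achieves $\spn(h)/2$. Conversely, for any $c$ the larger of $\max h-c$ and $c-\min h$ is at least half their sum, which is $\spn(h)/2$.

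There is no genuine obstacle, since the difficult estimate, the $\ell_\infty\to\ell_1$ bound on the covariance matrix, is Lemma~\ref{lem:opnorm_M_linf_l1} and is assumed. The only point needing care is the justification of the integral form. It holds because $t\mapsto\sigma_\delta(z_t)$ is continuously differentiable with derivative $\nabla^2\lse_\delta(z_t)(z-z')$, by the chain rule and Lemma~\ref{lem:hessian_lse}.
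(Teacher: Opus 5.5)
Your proof is correct and follows the paper's argument essentially step for step. Both use the integral form of the mean-value theorem for $\sigma_\delta=\nabla\lse_\delta$ with the bound $\|\nabla^2\lse_\delta\|_{\infty\to1}\le 1/\delta$ from Corollary~\ref{cor:hessian_opnorm}, then apply the first bound to $z$ and $z'+c\1$ and minimize over $c$ (optimum at $c^*=\tfrac12(\max(z-z')+\min(z-z'))$). Your explicit justification of $\min_c\|h-c\1\|_\infty=\spn(h)/2$ goes slightly beyond the paper, which only cites it from the Notation paragraph.
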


The proof is given in Appendix~\ref{app:lem:softmax_linf_l1}.

In mean-field games, one typically requires Lipschitz continuity with respect to the $\ell_1$-norm on the space of probability measures and the $\ell_\infty$-norm on the space of Q-functions. This is precisely why we carried out the above analysis in the $\ell_\infty \to \ell_1$ framework.

\section{Entropy-Regularized Mean-Field Games}
\label{sec:mfg}

Mean-field games (MFGs) extend MDPs to a large population of symmetric agents whose environment is shaped by the aggregate distribution of their states: a representative agent optimizes against a population distribution $m$, and equilibrium requires that $m$ coincide with the stationary distribution generated when all agents use the resulting optimal policy. In this section we formulate the entropy-regularized mean-field game under the average-reward criterion and show that the equilibrium is the fixed point of a contraction on $\Delta(S)$ when the temperature is large enough. The contraction yields existence and uniqueness, but its main role is to provide a provably convergent algorithm, which is made model-free in Section~\ref{sec:learning}. The idea that entropy regularization restores the regularity that classical finite mean-field games lack has been used in the discounted and finite-horizon settings \citep{CuKo21,guan2022,AnKaSa23}; in particular, \citet{guan2022} construct a contractive entropy-regularized mean-field operator for finite-horizon games with decoupled dynamics and a Kullback--Leibler regularizer. Here the dynamics depend on the population, the horizon is infinite, and the contraction constant is governed by the minorization mass $\varepsilon$ rather than by the horizon length.

Following \citet{AnKaSa23}, we impose Lipschitz regularity on the reward and the kernel that is uniform in the population distribution and in the state and action arguments (measured in the discrete metric). Two consequences of this uniformity drive the analysis: the policy-averaged kernel is $K_1$-Lipschitz in the action distribution, and the span of the reward is bounded by $2L_1$, so that the whole analysis is expressed through the Lipschitz constants $K_1$ and $L_1$, the minorization mass $\varepsilon$ and the temperature $\delta$.

\subsection{Finite-state mean-field model}

Recall that $S$ is a finite state space, $A$ is a finite action space, and
$\Delta(S)$ is the probability simplex over $S$. A \emph{mean-field
game model} is specified by a controlled transition kernel
    $P(\cdot\mid x,a,m)\in\Delta(S)$ depending on the population
    distribution $m\in\Delta(S)$, and a one-stage reward $r(x,a,m)\in\mathbb{R}$ also depending on
    $m$.

Given a stationary Markov stochastic policy $\pi\in\Pi$ and a
population distribution $m\in\Delta(S)$, the policy-averaged kernel
and reward~\eqref{eq:induced} extend to the mean-field
model as
\begin{equation}
\label{eq:policy_averaged_mfg}
P_m^\pi(y\mid x)
:=
\sum_{a\in A}\pi(a\mid x)\,P(y\mid x,a,m),
\qquad
r_m^\pi(x)
:=
\sum_{a\in A}\pi(a\mid x)\,r(x,a,m).
\end{equation}
The entropy-regularized one-stage reward of
Section~\ref{sec:variational} is then
$r_m^\pi(x)+\delta H(\pi(\cdot\mid x))$; in the terminology of
\citet{AnKaSa23}, this corresponds to the regularizer
$\Omega(\pi(\cdot\mid x)) := -\delta H(\pi(\cdot\mid x))$, which is
$\delta$-strongly convex on $\Delta(A)$. We do not need the general
strongly convex machinery of \citet{AnKaSa23}, since the explicit
softmax/log-sum-exp analysis of Section~\ref{sec:entropy} gives the
sharper, dimension-free Lipschitz constant $1/\delta$
(Lemma~\ref{lem:softmax_linf_l1}) in place of the generic
$\sqrt{|A|}/\rho$ bound obtained there via Legendre--Fenchel duality.

\subsection{Mean-field equilibrium}

In the stationary (time-homogeneous) setting the population
distribution is taken to be constant in time, $m_t\equiv m$.

\begin{definition}[Mean-field equilibrium]
\label{def:mfe}
A pair $(\pi^{*},m^{*})\in\Pi\times\Delta(S)$ is a \emph{stationary
mean-field equilibrium} if:
\begin{enumerate}
  \item \textbf{Optimality.} Given $m^{*}$, the policy $\pi^{*}$ is
    optimal for the entropy-regularized average-reward MDP with
    transition kernel $P(\cdot\mid x,a,m^{*})$ and reward
    $r(x,a,m^{*})$.
  \item \textbf{Consistency.} $m^{*}$ is an invariant distribution of
    the induced Markov chain:
    \[
    m^{*}(y)
    =
    \sum_{x\in S}m^{*}(x)\,P_{m^{*}}^{\pi^{*}}(y\mid x),
    \qquad y\in S.
    \]
\end{enumerate}
\end{definition}

In stationary mean-field games, for each $m\in\Delta(S)$, the representative agent faces an MDP whose
entropy-regularized AROE admits, by Proposition~\ref{prop:softmax},
the closed-form softmax policy
\begin{equation}\label{eq:mfg_softmax}
\pi_m^{*}(a\mid x)
=
\frac{\exp(Q_m^{*}(x,a)/\delta)}
     {\sum_{b\in A}\exp(Q_m^{*}(x,b)/\delta)},
\end{equation}
where $Q_m^{*}$ is the (unique up to an additive constant) fixed point
of the soft Q-operator $L_{\delta,m}$ of
Definition~\ref{def:soft_Q_operators} with data
$(P(\cdot\mid\cdot,\cdot,m), r(\cdot,\cdot,m))$, solving the soft
Q-AROE~\eqref{eq:soft_Q_AROE} at population distribution $m$. Since the
softmax map is invariant under adding a constant to all entries of its
argument, the policy $\pi_m^{*}$ in~\eqref{eq:mfg_softmax} is
well-defined despite the additive-constant ambiguity in $Q_m^{*}$.

\begin{definition}[Mean-field operator]
\label{def:mfe_op}
Define the map $\mathcal{F}:\Delta(S)\to\Delta(S)$ by
\begin{equation}\label{eq:mfe_op}
\mathcal{F}(m)(y)
:=
\sum_{x\in S}P_m^{\pi_m^{*}}(y\mid x)\,m(x)
=
\sum_{x\in S}\sum_{a\in A}P(y\mid x,a,m)\,\pi_m^{*}(a\mid x)\,m(x),
\qquad y\in S.
\end{equation}
A pair $(\pi^{*},m^{*})$ with $\pi^{*}=\pi_{m^{*}}^{*}$ is a mean-field
equilibrium if and only if $m^{*}$ is a fixed point of
$\mathcal{F}$: the consistency condition of Definition~\ref{def:mfe} is
the above fixed-point identity, while the optimality condition holds
because $\pi_{m^{*}}^{*}$ is an optimal policy of the
entropy-regularized average-reward MDP at population $m^{*}$. Hence, in the remainder of this section, our goal is to compute the fixed point of $\mathcal{F}$.
\end{definition}

\subsection{Lipschitz assumptions}
\label{sec:mfg_lip}

We now state the standing regularity assumptions of this section,
in the form used by \citet{AnKaSa23}. Write
$\|\mu\|_1:=\sum_x|\mu(x)|$ for $\mu$ a signed measure on $S$, and let
$\1_{\{\cdot\}}$ denote the indicator.

\begin{assumption}[Lipschitz kernel and reward, uniform in $(x,a)$]
\label{ass:lip_primitives}
There exist constants $K_1,L_1\ge 0$ such that for every
$x,\hat{x}\in S$, $a,\hat{a}\in A$, and $m,\hat{m}\in\Delta(S)$,
\begin{align}
\bigl\|P(\cdot\mid x,a,m)-P(\cdot\mid \hat{x},\hat{a},\hat{m})\bigr\|_1
&\le
K_1\Bigl(\1_{\{x\neq\hat{x}\}} + 2\cdot\1_{\{a\neq\hat{a}\}}
+ \|m-\hat{m}\|_1\Bigr),
\label{eq:lip_kernel}\\[2pt]
\bigl|r(x,a,m)-r(\hat{x},\hat{a},\hat{m})\bigr|
&\le
L_1\Bigl(\1_{\{x\neq\hat{x}\}} + \1_{\{a\neq\hat{a}\}}
+ \|m-\hat{m}\|_1\Bigr).
\label{eq:lip_reward}
\end{align}
\end{assumption}

\begin{assumption}[Uniform minorization]
\label{ass:minor_mfg}
There exists a sub-probability measure $\nu$ on $S$ with total mass
$\varepsilon\in(0,1)$ such that
$P(y\mid x,a,m)\ge\nu(y)$ for every $x\in S$, $a\in A$, $y\in S$
and every $m\in\Delta(S)$.
\end{assumption}

\noindent Note that Assumption~\ref{ass:minor_mfg} allows us to apply the results established in Sections~\ref{sec:classical} and~\ref{sec:entropy} uniformly in $m$.

The following bound on the reward span, an immediate consequence of the
uniformity in Assumption~\ref{ass:lip_primitives}, will be used in
Lemma~\ref{lem:span-Qm}.

\begin{lemma}[Reward-span bound]
\label{lem:span_r}
Under Assumption~\ref{ass:lip_primitives}, for every $m\in\Delta(S)$,
\begin{equation}
\label{eq:span_r}
\spn\bigl(r(\cdot,\cdot,m)\bigr)
:=\max_{(x,a)\in S\times A}r(x,a,m)-\min_{(x,a)\in S\times A}r(x,a,m)
\le 2L_1 .
\end{equation}
\end{lemma}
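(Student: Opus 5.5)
The bound is a direct specialization of the reward Lipschitz estimate~\eqref{eq:lip_reward} with the population argument frozen. Fix $m\in\Delta(S)$. Because $S\times A$ is finite, the maximum and the minimum of $r(\cdot,\cdot,m)$ are attained, say at $(x,a)$ and $(\hat x,\hat a)$. Then
\[
\spn\bigl(r(\cdot,\cdot,m)\bigr)=r(x,a,m)-r(\hat x,\hat a,m)\le\bigl|r(x,a,m)-r(\hat x,\hat a,m)\bigr|.
\]

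Next apply~\eqref{eq:lip_reward} with $\hat m=m$. This makes the population term $\|m-\hat m\|_1$ vanish. Each of the two remaining indicators $\1_{\{x\neq\hat x\}}$ and $\1_{\{a\neq\hat a\}}$ is at most $1$. This gives
\[
\bigl|r(x,a,m)-r(\hat x,\hat a,m)\bigr|\le L_1(1+1+0)=2L_1,
\]
which is~\eqref{eq:span_r}.

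There is no real obstacle here. The point is only that Assumption~\ref{ass:lip_primitives} is stated uniformly in the discrete metric on $S$ and $A$, so any two state--action pairs lie at "distance" at most $2$. Equivalently, one may take an arbitrary pair of arguments, bound their difference by $2L_1$ using the same estimate, and then take the supremum over both pairs. Since neither step depends on $m$, the bound holds for every $m\in\Delta(S)$.
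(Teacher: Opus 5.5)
Your proof is correct and essentially the paper's argument: specialize \eqref{eq:lip_reward} with $\hat m=m$ at a maximizer and a minimizer of $r(\cdot,\cdot,m)$. The paper goes through the intermediate point $(x_2,a_1)$ and bounds the state change and the action change separately, but since Assumption~\ref{ass:lip_primitives} is stated for arbitrary pairs $(x,a),(\hat x,\hat a)$, your single direct application giving $L_1(1+1+0)=2L_1$ is equally valid and slightly shorter.
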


\begin{proof}{Proof.}
Fix $m\in\Delta(S)$. Since $S\times A$ is finite, the maximum and minimum
of $r(\cdot,\cdot,m)$ are attained; let $(x_1,a_1)$ and $(x_2,a_2)$ be a
maximizer and a minimizer, respectively, so that
$\spn\bigl(r(\cdot,\cdot,m)\bigr) = r(x_1,a_1,m) - r(x_2,a_2,m)$.
Inserting the intermediate point $(x_2,a_1)$ and applying the triangle
inequality,
\[
r(x_1,a_1,m) - r(x_2,a_2,m)
= \bigl(r(x_1,a_1,m) - r(x_2,a_1,m)\bigr)
+ \bigl(r(x_2,a_1,m) - r(x_2,a_2,m)\bigr).
\]
We bound each difference using the reward part~\eqref{eq:lip_reward} of
Assumption~\ref{ass:lip_primitives} with $\hat{m}=m$, so that the population
term $\|m-\hat{m}\|_1$ vanishes. In the first difference only the state
changes ($a$ is fixed at $a_1$), so it is at most
$L_1\,\1_{\{x_1\neq x_2\}}\le L_1$; in the second only the action changes
($x$ is fixed at $x_2$), so it is at most $L_1\,\1_{\{a_1\neq a_2\}}\le L_1$.
Adding the two bounds gives
$\spn\bigl(r(\cdot,\cdot,m)\bigr)\le L_1 + L_1 = 2L_1$,
as claimed.\Halmos
\end{proof}

\begin{lemma}[Averaging against mean-zero weights]
\label{lem:span-tv}
Let $E$ be a finite set, $(V,\|\cdot\|)$ a normed space,
$g:E\to V$, and $\varphi:E\to\mathbb{R}$ with
$\sum_{e\in E}\varphi(e)=0$. Then
\[
\Bigl\|\sum_{e\in E} g(e)\,\varphi(e)\Bigr\|
\;\le\;
\frac{1}{2}\,
\max_{e,e'\in E}\bigl\|g(e)-g(e')\bigr\|
\;\|\varphi\|_1 .
\]
In particular, for $V=\mathbb{R}$ and $\mu,\nu\in\Delta(E)$,
\( \displaystyle
\Bigl|\sum_{e\in E} g(e)\bigl(\mu(e)-\nu(e)\bigr)\Bigr|
\le
\frac{\spn(g)}{2}\,\|\mu-\nu\|_1 .
\)
\end{lemma}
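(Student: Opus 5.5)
The idea is to exploit the zero-sum property of $\varphi$: subtracting a fixed reference vector from $g$ does not change the sum. Then we pick that reference vector to minimize the resulting sup of distances.

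Fix any $e_0\in E$. Since $\sum_e\varphi(e)=0$, we have $\sum_e g(e)\varphi(e)=\sum_e (g(e)-g(e_0))\varphi(e)$. This alone only gives the constant $\max_{e,e'}\|g(e)-g(e')\|$ without the factor $\tfrac12$. To recover the factor $\tfrac12$, we split $\varphi=\varphi^+-\varphi^-$ into positive and negative parts. The zero-sum condition gives $\sum_e\varphi^+(e)=\sum_e\varphi^-(e)=\tfrac12\|\varphi\|_1=:\theta$. If $\theta=0$ there is nothing to prove, so assume $\theta>0$.

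Then
\[
\sum_e g(e)\varphi(e)=\theta\Bigl(\sum_e g(e)\tfrac{\varphi^+(e)}{\theta}-\sum_{e'} g(e')\tfrac{\varphi^-(e')}{\theta}\Bigr).
\]
Write $\alpha:=\varphi^+/\theta$ and $\beta:=\varphi^-/\theta$; both are probability vectors on $E$. Using $\sum_{e'}\beta(e')=1$ and $\sum_e\alpha(e)=1$, the difference equals the double sum $\sum_{e,e'}\alpha(e)\beta(e')\bigl(g(e)-g(e')\bigr)$. The triangle inequality then bounds its norm by $\max_{e,e'}\|g(e)-g(e')\|$. Multiplying back by $\theta=\tfrac12\|\varphi\|_1$ gives the claim.

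For the particular case, take $V=\mathbb{R}$ with the absolute value and $\varphi=\mu-\nu$, which sums to zero because both are probability vectors. Note that $\max_{e,e'}|g(e)-g(e')|=\spn(g)$, and this yields $\tfrac{\spn(g)}{2}\|\mu-\nu\|_1$.

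The only delicate step is the coupling via the product weights $\alpha(e)\beta(e')$. This is where the factor $\tfrac12$ comes from, and it must be done with vector-valued $g$, so only the triangle inequality is used, with no order structure. The remaining steps are routine.
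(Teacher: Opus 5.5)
Your proof is correct and follows the paper's argument essentially step for step: you split $\varphi$ into positive and negative parts of equal mass $\tfrac12\|\varphi\|_1$, normalize them to probability vectors, rewrite the difference as the double sum with weights $\alpha(e)\beta(e')$, and apply the triangle inequality. The opening remark about subtracting $g(e_0)$ is only motivation and is not needed for the argument.
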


\begin{proof}{Proof.}
Decompose $\varphi=\varphi^{+}-\varphi^{-}$ into its positive and
negative parts. Since $\sum_e\varphi(e)=0$, the two parts have equal
mass $c:=\sum_e\varphi^{+}(e)=\sum_e\varphi^{-}(e)
=\tfrac{1}{2}\|\varphi\|_1$. If $c=0$ the claim is trivial; otherwise
$\alpha:=\varphi^{+}/c$ and $\beta:=\varphi^{-}/c$ are probability
vectors on $E$, and
\[
\sum_e g(e)\varphi(e)
= c\Bigl(\sum_e \alpha(e)\,g(e) - \sum_{e'} \beta(e')\,g(e')\Bigr)
= c\sum_{e,e'}\alpha(e)\beta(e')\bigl(g(e)-g(e')\bigr),
\]
using $\sum_{e'}\beta(e')=\sum_e\alpha(e)=1$. The triangle inequality
gives
\[
\Bigl\|\sum_e g(e)\varphi(e)\Bigr\|
\le c\,\max_{e,e'}\|g(e)-g(e')\|
= \frac{\|\varphi\|_1}{2}\,\max_{e,e'}\|g(e)-g(e')\| .
\]
The scalar case follows with $\varphi=\mu-\nu$ and
$\max_{e,e'}|g(e)-g(e')|=\spn(g)$.\Halmos
\end{proof}

A first consequence of the averaging lemma is the classical
contraction property of transition kernels acting on differences of
probability measures, which we will use for the measure-perturbation
term in Theorem~\ref{thm:F-contraction}.

\begin{corollary}[Dobrushin contraction; cf.\
{\citealp[Lemma~A.2]{KoRa08}}]
\label{cor:dobrushin}
Let $K(\cdot\mid x)$ be a transition kernel on the finite set $S$,
with \emph{Dobrushin coefficient}
\[
\theta_K
:=
\frac{1}{2}\max_{x,z\in S}
\bigl\|K(\cdot\mid x)-K(\cdot\mid z)\bigr\|_1 \;\in[0,1].
\]
Then, for every $\mu,\mu'\in\Delta(S)$,
\( \displaystyle
\|\mu K - \mu' K\|_1 \;\le\; \theta_K\,\|\mu-\mu'\|_1\),
where
\( \displaystyle (\mu K)(y):=\sum_{x\in S}K(y\mid x)\,\mu(x) \).
\
Moreover, if $K(\cdot\mid x)\ge\nu_0$ pointwise for every $x\in S$,
for some sub-probability measure $\nu_0$ on $S$ with total mass
$\varepsilon_0\in(0,1)$, then $\theta_K\le 1-\varepsilon_0$.
\end{corollary}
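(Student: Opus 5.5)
The first claim follows directly from Lemma~\ref{lem:span-tv}, applied with $E=S$, the normed space $V=\R^S$ carrying the $\ell^1$-norm, $g(x):=K(\cdot\mid x)$, and mean-zero weights $\varphi:=\mu-\mu'$. Since $\sum_x(\mu(x)-\mu'(x))=0$, we have
\[
\mu K-\mu'K=\sum_{x\in S}K(\cdot\mid x)\bigl(\mu(x)-\mu'(x)\bigr).
\]
The averaging lemma then bounds the $\ell^1$-norm of this vector by $\tfrac12\max_{x,z}\|K(\cdot\mid x)-K(\cdot\mid z)\|_1\,\|\mu-\mu'\|_1=\theta_K\|\mu-\mu'\|_1$. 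The range $\theta_K\in[0,1]$ comes from the triangle inequality: each of $K(\cdot\mid x)$ and $K(\cdot\mid z)$ has $\ell^1$-norm one, so their difference has $\ell^1$-norm at most $2$.

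For the minorization bound, write $K(\cdot\mid x)=\nu_0+\widehat K(\cdot\mid x)$, where $\widehat K(\cdot\mid x):=K(\cdot\mid x)-\nu_0\ge0$ has total mass $1-\varepsilon_0$. In the difference $K(\cdot\mid x)-K(\cdot\mid z)=\widehat K(\cdot\mid x)-\widehat K(\cdot\mid z)$ the common part $\nu_0$ cancels. Because the residual parts are nonnegative, the triangle inequality gives $\|\widehat K(\cdot\mid x)-\widehat K(\cdot\mid z)\|_1\le 2(1-\varepsilon_0)$, and therefore $\theta_K\le 1-\varepsilon_0$.

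Every step is routine given Lemma~\ref{lem:span-tv}. The only point that needs attention is confirming that the vector-valued form of that lemma applies with the $\ell^1$-norm on $\R^S$, which it does since the lemma allows an arbitrary normed space.
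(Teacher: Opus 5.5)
Your proposal is correct and follows the paper's proof essentially step for step: the averaging lemma with $g(x)=K(\cdot\mid x)$ in $(\R^S,\|\cdot\|_1)$ and $\varphi=\mu-\mu'$, then cancellation of $\nu_0$ and the triangle inequality on the nonnegative residuals of mass $1-\varepsilon_0$. One wording point: the identity $\mu K-\mu'K=\sum_x K(\cdot\mid x)\bigl(\mu(x)-\mu'(x)\bigr)$ holds by linearity alone. The zero-sum property is what lets you apply Lemma~\ref{lem:span-tv}, not what gives the identity.
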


\begin{proof}{Proof.}
The function $\varphi:=\mu-\mu'$ satisfies $\sum_x\varphi(x)=0$, so
Lemma~\ref{lem:span-tv} with $E=S$ and
$g(x):=K(\cdot\mid x)\in(\mathbb{R}^S,\|\cdot\|_1)$ gives
\[
\|\mu K-\mu' K\|_1
=\Bigl\|\sum_{x\in S} g(x)\,\varphi(x)\Bigr\|_1
\le\frac{1}{2}\max_{x,z\in S}
\bigl\|K(\cdot\mid x)-K(\cdot\mid z)\bigr\|_1\,\|\varphi\|_1
=\theta_K\,\|\mu-\mu'\|_1 .
\]
For the second claim, write
$\widehat{K}(\cdot\mid x):=K(\cdot\mid x)-\nu_0\ge 0$, which has total
mass $1-\varepsilon_0$ for every $x$. Since the $\nu_0$-parts cancel,
$K(\cdot\mid x)-K(\cdot\mid z)
=\widehat{K}(\cdot\mid x)-\widehat{K}(\cdot\mid z)$, and the triangle
inequality gives
\[
\bigl\|K(\cdot\mid x)-K(\cdot\mid z)\bigr\|_1
\le\bigl\|\widehat{K}(\cdot\mid x)\bigr\|_1
+\bigl\|\widehat{K}(\cdot\mid z)\bigr\|_1
=2(1-\varepsilon_0),
\]
so $\theta_K\le 1-\varepsilon_0$.\Halmos
\end{proof}

The averaging lemma also converts the action-uniformity in
Assumption~\ref{ass:lip_primitives} into Lipschitz continuity of the
policy-averaged kernel~\eqref{eq:policy_averaged_mfg} with respect to
the action distribution prescribed by the policy. This is the
analogue of \citet[Proposition~1]{AnKaSa23}, with the optimal-coupling
argument there replaced by the elementary mean-zero decomposition
above. Only the kernel bound is needed in the sequel; the analogous
bound for the policy-averaged reward holds by the same argument but
plays no role in the contraction proof, so we omit it.

\begin{proposition}[Lipschitz continuity of the policy-averaged
kernel]
\label{prop:policy-avg-lip}
Under Assumption~\ref{ass:lip_primitives}, for every $x\in S$, every
pair of policies $\pi,\hat{\pi}\in\Pi$, and every
$m,\hat{m}\in\Delta(S)$,
\begin{equation}
\label{eq:policy_avg_kernel_lip}
\bigl\|P_m^{\pi}(\cdot\mid x)-P_{\hat{m}}^{\hat{\pi}}(\cdot\mid x)
\bigr\|_1
\le
K_1\Bigl(\bigl\|\pi(\cdot\mid x)-\hat{\pi}(\cdot\mid x)\bigr\|_1
+ \|m-\hat{m}\|_1\Bigr).
\end{equation}
\end{proposition}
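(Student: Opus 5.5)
We split the difference by inserting the intermediate kernel $P_{\hat m}^{\pi}(\cdot\mid x)$, which uses the policy $\pi$ but the population $\hat m$:
\[
P_m^{\pi}(\cdot\mid x)-P_{\hat m}^{\hat\pi}(\cdot\mid x)
=\bigl(P_m^{\pi}-P_{\hat m}^{\pi}\bigr)(\cdot\mid x)
+\bigl(P_{\hat m}^{\pi}-P_{\hat m}^{\hat\pi}\bigr)(\cdot\mid x).
\]
We then bound the two terms separately in $\|\cdot\|_1$ and add the bounds.

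For the population term, write $P_m^{\pi}(\cdot\mid x)-P_{\hat m}^{\pi}(\cdot\mid x)=\sum_a\pi(a\mid x)\bigl(P(\cdot\mid x,a,m)-P(\cdot\mid x,a,\hat m)\bigr)$. The triangle inequality, combined with~\eqref{eq:lip_kernel} at $\hat x=x$ and $\hat a=a$, bounds each summand by $K_1\|m-\hat m\|_1$. Since $\sum_a\pi(a\mid x)=1$, this term is at most $K_1\|m-\hat m\|_1$.

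For the policy term, we apply Lemma~\ref{lem:span-tv} with $E=A$, $V=(\mathbb{R}^S,\|\cdot\|_1)$, $g(a):=P(\cdot\mid x,a,\hat m)$ and $\varphi(a):=\pi(a\mid x)-\hat\pi(a\mid x)$. The weights $\varphi$ have zero sum because both $\pi(\cdot\mid x)$ and $\hat\pi(\cdot\mid x)$ are probability vectors. The lemma gives
\[
\bigl\|P_{\hat m}^{\pi}(\cdot\mid x)-P_{\hat m}^{\hat\pi}(\cdot\mid x)\bigr\|_1
\le\tfrac12\max_{a,a'}\bigl\|P(\cdot\mid x,a,\hat m)-P(\cdot\mid x,a',\hat m)\bigr\|_1\,\|\pi(\cdot\mid x)-\hat\pi(\cdot\mid x)\|_1 .
\]
By~\eqref{eq:lip_kernel} with the same state and the same population, the maximum is at most $2K_1$. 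The action weight $2$ in~\eqref{eq:lip_kernel} is exactly what cancels the factor $\tfrac12$ from the averaging lemma. This term is therefore at most $K_1\|\pi(\cdot\mid x)-\hat\pi(\cdot\mid x)\|_1$.

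Adding the two bounds yields~\eqref{eq:policy_avg_kernel_lip}. The argument has no real obstacle. The only point needing care is to apply the averaging lemma to the mean-zero difference of action distributions, rather than using a naive triangle inequality, which would lose the factor $\tfrac12$ and give $2K_1$ in front of the policy term.
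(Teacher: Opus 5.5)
Your proof is correct and is the paper's argument: the same two-term split, the triangle inequality with the population part of \eqref{eq:lip_kernel} for one term, and Lemma~\ref{lem:span-tv} with the action weight $2$ cancelling the factor $\tfrac12$ for the other. The only difference is the intermediate point. You insert $P_{\hat m}^{\pi}$, while the paper inserts $P_{m}^{\hat\pi}$, so that its policy difference is averaged against the kernel at $m$ and its population difference under $\hat\pi$; this choice is symmetric and immaterial.
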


\begin{proof}{Proof.}
Splitting
\begin{align*}
	P_m^{\pi}(\cdot\mid x)-P_{\hat{m}}^{\hat{\pi}}(\cdot\mid x)
	=
	\sum_{a\in A}
		\bigl(\pi(a\mid x)-\hat{\pi}(a\mid x)\bigr)\,
		P(\cdot\mid x,a,m)
	+
	\sum_{a\in A} \hat{\pi}(a\mid x)\,
		\bigl(P(\cdot\mid x,a,m)-P(\cdot\mid x,a,\hat{m})\bigr),
\end{align*}
for the second summand on the right-hand side, the triangle inequality and the $m$-part of
\eqref{eq:lip_kernel} give
\begin{align*}
\Bigl\|\sum_a \hat{\pi}(a\mid x)
\bigl(P(\cdot\mid x,a,m)-P(\cdot\mid x,a,\hat{m})\bigr)\Bigr\|_1
\le \sum_a \hat{\pi}(a\mid x)\,K_1\|m-\hat{m}\|_1
= K_1\|m-\hat{m}\|_1 ,
\end{align*}
and for the first summand, $\varphi(a):=\pi(a\mid x)-\hat{\pi}(a\mid x)$
satisfies $\sum_a\varphi(a)=0$, so Lemma~\ref{lem:span-tv} with
$g(a):=P(\cdot\mid x,a,m)\in(\mathbb{R}^S,\|\cdot\|_1)$ and the
action part of \eqref{eq:lip_kernel} (with $x=\hat{x}$, $m=\hat{m}$)
yield
\begin{align*}
\Bigl\|\sum_a \varphi(a)\,P(\cdot\mid x,a,m)\Bigr\|_1
&\le \frac{1}{2}\max_{a,\hat{a}}
\bigl\|P(\cdot\mid x,a,m)-P(\cdot\mid x,\hat{a},m)\bigr\|_1\,
\|\varphi\|_1\\
&\le \frac{1}{2}\cdot 2K_1\,\|\varphi\|_1
= K_1\,\bigl\|\pi(\cdot\mid x)-\hat{\pi}(\cdot\mid x)\bigr\|_1 .
\tag*{\Halmos}
\end{align*}
\end{proof}

\begin{remark}
The constant $K_1$ in front of the policy difference
in~\eqref{eq:policy_avg_kernel_lip} is where the action-uniformity of
Assumption~\ref{ass:lip_primitives} enters the analysis: without it,
the best available bound on the policy part is the trivial constant
$1$ (since $\sum_y P(y\mid x,a,m)=1$). It is this factor $K_1$ that
multiplies the policy-perturbation term in
Theorem~\ref{thm:F-contraction} below.
\end{remark}

\subsection{Lipschitz continuity of \texorpdfstring{$Q_m^*$ and $\pi_m^*$}{Q\_m* and pi\_m*}}
\label{sec:lipschitz}

The proof that $\mathcal{F}$ is a contraction proceeds through two
Lipschitz estimates: one for the soft Q-function $m \mapsto Q_m^*$ in
span seminorm, and one for the softmax policy $m \mapsto \pi_m^*$ in
$\ell^1$-norm. The strategy follows the perturbed fixed-point argument
of \citet{AnKaSa23}, adapted from the discounted-reward setting to the
entropy-regularized average-reward setting of this paper. The soft Q-operator
$L_{\delta,m}$ is a span-seminorm contraction uniformly in
$m$ by Theorem~\ref{thm:Lhat_delta_span} and
Assumption~\ref{ass:minor_mfg}:
\begin{equation}
\label{eq:Lhat-uniform-contraction}
\spn\bigl(L_{\delta,m}Q - L_{\delta,m}Q'\bigr)
\leq (1-\varepsilon)\,\spn(Q-Q'),
\qquad \forall m\in\Delta(S),\ \forall Q,Q' .
\end{equation}
Therefore, for each $m$, the operator $L_{\delta,m}$ has a unique (up to
additive constant) fixed point $Q_m^*$ in the span quotient, or equivalently the operator $\widetilde{L}_{\delta,m}Q(x,a) := L_{\delta,m}Q(x,a) - L_{\delta,m}Q(x_0,a_0)$ has a unique fixed point $Q_m^*$ in $\R^{S\times A}_{(x_0,a_0)}$, which
together with $\rho_m^* := L_{\delta,m} Q_m^*(x_0,a_0)$
solves the soft Q-AROE~\eqref{eq:soft_Q_AROE} at population
distribution $m$.

We first record a uniform span bound on $Q_m^*$, expressed through the
Lipschitz constant $L_1$ via~\eqref{eq:span_r}.

\begin{lemma}[Uniform span bound on $Q_m^*$]
\label{lem:span-Qm}
Under Assumptions~\ref{ass:lip_primitives} and~\ref{ass:minor_mfg}, for every
$m\in\Delta(S)$,
\[
\spn(Q_m^*)
\;\leq\;
\frac{\spn\bigl(r(\cdot,\cdot,m)\bigr)}{\varepsilon}
\;\leq\;
\frac{2L_1}{\varepsilon}.
\]
\end{lemma}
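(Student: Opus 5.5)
The plan is to use the fixed-point identity $\rho_m^*\1+Q_m^*=L_{\delta,m}Q_m^*$ together with the span contraction of Theorem~\ref{thm:Lhat_delta_span}. The one subtlety is that $L_{\delta,m}$ is affine-like rather than linear. So I would compare $Q_m^*$ with the function $Q\equiv 0$ instead of bounding $\spn(L_{\delta,m}Q)$ directly.

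\textbf{Step 1: compare with $Q\equiv0$.} Since $\spn$ is invariant under adding constants, $\spn(Q_m^*)=\spn(L_{\delta,m}Q_m^*)$. By the triangle inequality for the seminorm,
\[
\spn(L_{\delta,m}Q_m^*)\le \spn\bigl(L_{\delta,m}Q_m^*-L_{\delta,m}0\bigr)+\spn(L_{\delta,m}0).
\]
Applying \eqref{eq:Lhat-uniform-contraction} to the first term bounds it by $(1-\varepsilon)\spn(Q_m^*-0)=(1-\varepsilon)\spn(Q_m^*)$.

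\textbf{Step 2: compute $L_{\delta,m}0$.} Evaluating the soft Q-operator at zero gives
\[
(L_{\delta,m}0)(x,a)=r(x,a,m)+\sum_y P(y\mid x,a,m)\,\delta\log|A|=r(x,a,m)+\delta\log|A|.
\]
This is $r(\cdot,\cdot,m)$ shifted by a constant, so $\spn(L_{\delta,m}0)=\spn(r(\cdot,\cdot,m))$.

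\textbf{Step 3: conclude.} Combining the two steps gives $\spn(Q_m^*)\le(1-\varepsilon)\spn(Q_m^*)+\spn(r(\cdot,\cdot,m))$. Because $\spn(Q_m^*)$ is finite (the space is finite-dimensional) and $\varepsilon>0$, this rearranges to $\spn(Q_m^*)\le\spn(r(\cdot,\cdot,m))/\varepsilon$. Lemma~\ref{lem:span_r} then yields the second inequality, $\spn(r(\cdot,\cdot,m))/\varepsilon\le 2L_1/\varepsilon$.

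There is no real obstacle here. The one point to get right is that the span is a seminorm, so the triangle inequality holds and it can be applied after the constant $\rho_m^*\1$ is removed. The only step needing care is noticing that the log-sum-exp of the zero vector is the constant $\delta\log|A|$. This makes the bound independent of $\delta$.
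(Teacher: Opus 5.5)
Your proof is correct, but it is organized differently from the paper's.

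You use the span contraction of Theorem~\ref{thm:Lhat_delta_span} as a black box. The bound then becomes a generic a priori estimate for the fixed point of a span contraction, $\spn(Q_m^*)\le\spn(L_{\delta,m}0)/\varepsilon$. You obtain it from the reference point $Q\equiv 0$, whose image $L_{\delta,m}0=r(\cdot,\cdot,m)+\delta\log|A|\,\1$ has span exactly $\spn(r(\cdot,\cdot,m))$.

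The paper instead works directly with the soft Q-AROE at two pairs $(x_1,a_1)$ and $(x_2,a_2)$:
\begin{itemize}
\item it subtracts the two equations, so $\rho_m^*$ cancels;
\item it splits $P=\widehat P+\nu$, so the $\nu$-parts cancel;
\item it bounds the residual part by $(1-\varepsilon)\spn(V_m^*)$;
\item it applies Lemma~\ref{lem:lse_lip} across different states to get $\spn(V_m^*)\le\spn(Q_m^*)$.
\end{itemize}
In effect, the paper re-runs the contraction argument at the fixed point rather than citing it.

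Your version is shorter and reuses a result already proved. The paper's version is self-contained and shows explicitly where the common action set enters, namely in comparing $Q_m^*(y_1,\cdot)$ with $Q_m^*(y_2,\cdot)$. In your argument the same hypothesis is used implicitly: it is what makes $\lse_\delta(0)=\delta\log|A|$ independent of $y$. Both routes give the same constant.
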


\begin{proof}{Proof.}
Write $V_m^*(y) := \lse_\delta\bigl(Q_m^*(y,\cdot)\bigr)$. The
soft Q-AROE~\eqref{eq:soft_Q_AROE} at population distribution $m$
reads
\[
\rho_m^* + Q_m^*(x,a)
= r(x,a,m) + \sum_{y\in S} P(y\mid x,a,m)\,V_m^*(y),
\qquad (x,a)\in S\times A.
\]
Fix $(x_1,a_1),(x_2,a_2)\in S\times A$ and subtract the corresponding
two equations; the constant $\rho_m^*$ cancels. Decomposing
$P(\cdot\mid x_i,a_i,m) = \widehat{P}(\cdot\mid x_i,a_i,m) + \nu$
(Assumption~\ref{ass:minor_mfg}), the $\nu$-parts of the two sums are
identical and cancel as well. Since each residual kernel is
non-negative with total mass $1-\varepsilon$,
\[
Q_m^*(x_1,a_1) - Q_m^*(x_2,a_2)
\leq \spn\bigl(r(\cdot,\cdot,m)\bigr)
+ (1-\varepsilon)\Bigl(\max_{y\in S} V_m^*(y)
- \min_{y\in S} V_m^*(y)\Bigr).
\]
Because the action set is the same at every state, Lemma~\ref{lem:lse_lip} applies
to the pair of vectors $Q_m^*(y_1,\cdot),\,Q_m^*(y_2,\cdot)\in
\mathbb{R}^{A}$ and yields, for any $y_1,y_2\in S$,
\[
\bigl|V_m^*(y_1) - V_m^*(y_2)\bigr|
\leq \max_{b\in A}\bigl|Q_m^*(y_1,b)-Q_m^*(y_2,b)\bigr|
\leq \spn(Q_m^*),
\]
i.e.\ $\spn(V_m^*) \leq \spn(Q_m^*)$. Taking the maximum over
$(x_1,a_1)$ and the minimum over $(x_2,a_2)$ in the previous display
and rearranging,
$\varepsilon\,\spn(Q_m^*) \leq \spn\bigl(r(\cdot,\cdot,m)\bigr)
\leq 2L_1$,
where the last inequality is~\eqref{eq:span_r}.\Halmos
\end{proof}

The next lemma is the heart of this section.

\begin{lemma}[Lipschitz soft Q-function]
\label{lem:lipschitz-Q}
Under Assumptions~\ref{ass:lip_primitives}
and~\ref{ass:minor_mfg}, for every $m,m'\in\Delta(S)$,
\[
\spn(Q_m^* - Q_{m'}^*) \leq C_Q\,\|m-m'\|_1,
\qquad
C_Q := \frac{2L_1}{\varepsilon}
\left(1 + \frac{K_1}{\varepsilon}\right).
\]
\end{lemma}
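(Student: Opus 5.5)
We use the perturbed fixed-point argument: compare $Q_m^*$ and $Q_{m'}^*$ through the operator $L_{\delta,m}$, which contracts in the span seminorm by \eqref{eq:Lhat-uniform-contraction}. Since $Q_m^*=L_{\delta,m}Q_m^*-\rho_m^*\1$ and $Q_{m'}^*=L_{\delta,m'}Q_{m'}^*-\rho_{m'}^*\1$, and the span ignores constants, the triangle inequality gives
\[
\spn(Q_m^*-Q_{m'}^*)\le \spn\bigl(L_{\delta,m}Q_m^*-L_{\delta,m}Q_{m'}^*\bigr)+\spn\bigl(L_{\delta,m}Q_{m'}^*-L_{\delta,m'}Q_{m'}^*\bigr).
\]
The first term is at most $(1-\varepsilon)\spn(Q_m^*-Q_{m'}^*)$ by \eqref{eq:Lhat-uniform-contraction}. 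Rearranging, it then suffices to show that the perturbation term is at most $\varepsilon\, C_Q\|m-m'\|_1=2L_1(1+K_1/\varepsilon)\|m-m'\|_1$.

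For the perturbation term, write $V(y):=\lse_\delta(Q_{m'}^*(y,\cdot))$. Then
\[
(L_{\delta,m}Q_{m'}^*-L_{\delta,m'}Q_{m'}^*)(x,a)=\bigl(r(x,a,m)-r(x,a,m')\bigr)+\sum_y V(y)\bigl(P(y\mid x,a,m)-P(y\mid x,a,m')\bigr).
\]
We bound the span by twice the sup-norm of this difference.

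\textbf{Reward part.} By \eqref{eq:lip_reward} it is bounded pointwise by $L_1\|m-m'\|_1$, contributing at most $2L_1\|m-m'\|_1$ to the span.

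\textbf{Kernel part.} Here $P(\cdot\mid x,a,m)-P(\cdot\mid x,a,m')$ has mean zero. Lemma~\ref{lem:span-tv} therefore bounds it by $\tfrac12\spn(V)\,K_1\|m-m'\|_1$. As in the proof of Lemma~\ref{lem:span-Qm}, Lemma~\ref{lem:lse_lip} gives $\spn(V)\le\spn(Q_{m'}^*)\le 2L_1/\varepsilon$. So the kernel part is at most $(L_1K_1/\varepsilon)\|m-m'\|_1$ pointwise, hence at most $(2L_1K_1/\varepsilon)\|m-m'\|_1$ in span.

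Summing the two parts gives exactly $2L_1(1+K_1/\varepsilon)\|m-m'\|_1$. Dividing by $\varepsilon$ then yields $C_Q$.

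The main obstacle is bookkeeping of the factors of $2$ so that the constant matches exactly. Bounding the span of each piece by twice its sup-norm is what produces $2L_1$ and $2L_1K_1/\varepsilon$. A sharper estimate would be possible for the reward part, but it is not needed. One must also check that the fixed points are taken in the normalized space while the contraction is applied in the span quotient, so the additive constants $\rho_m^*,\rho_{m'}^*$ drop out legitimately.
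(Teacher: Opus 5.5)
Your proof is correct and is essentially the paper's argument: the paper inserts the intermediate point $L_{\delta,m'}Q_m^*$, perturbing at $Q_m^*$ and contracting with $L_{\delta,m'}$, whereas you insert $L_{\delta,m}Q_{m'}^*$. Because the span bound of Lemma~\ref{lem:span-Qm} is uniform in the population, the two choices are symmetric and give the same constant $C_Q$, with the same factor-of-two bookkeeping.
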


\begin{proof}{Proof.}
The proof has three steps: a one-step perturbation estimate, a
triangle inequality with the uniform
contraction~\eqref{eq:Lhat-uniform-contraction}, and an application of
Lemma~\ref{lem:span-Qm}.

\smallskip
\noindent\emph{Step 1 (one-step perturbation).}
We claim that for every $Q : S\times A\to\mathbb{R}$ and every
$m,m'\in\Delta(S)$,
\begin{equation}
\label{eq:one-step-perturbation}
\spn\bigl(L_{\delta,m} Q - L_{\delta,m'} Q\bigr)
\leq \bigl(2L_1 + K_1\,\spn(Q)\bigr)\,\|m-m'\|_1 .
\end{equation}
Fix $(x,a)\in S\times A$ and write
\begin{align*}
u(x,a) &:= r(x,a,m) + \sum_y P(y\mid x,a,m)\,
\lse_\delta(Q(y,\cdot)),\\
v(x,a) &:= r(x,a,m') + \sum_y P(y\mid x,a,m')\,
\lse_\delta(Q(y,\cdot)),
\end{align*}
so that $(L_{\delta,m} Q)(x,a) = u(x,a)$ and
$(L_{\delta,m'} Q)(x,a) = v(x,a)$. Then
\[
u(x,a) - v(x,a)
= \bigl(r(x,a,m) - r(x,a,m')\bigr)
+ \sum_y \bigl(P(y\mid x,a,m) - P(y\mid x,a,m')\bigr)\,
  \lse_\delta(Q(y,\cdot)).
\]
By the $m$-part of Assumption~\ref{ass:lip_primitives}, the reward
term is bounded in absolute value by $L_1\|m-m'\|_1$. For the kernel
term, $P(\cdot\mid x,a,m)$ and $P(\cdot\mid x,a,m')$ are
probability measures, so the scalar case of Lemma~\ref{lem:span-tv}
gives
\[
\left|\sum_y \bigl(P(y\mid x,a,m) - P(y\mid x,a,m')\bigr)\,
\lse_\delta(Q(y,\cdot))\right|
\leq \frac{\spn\bigl(\lse_\delta(Q(\cdot,\cdot))\bigr)}{2}\,
      K_1\|m-m'\|_1 .
\]
Since the action set is common to all states, Lemma~\ref{lem:lse_lip} applies
to the vectors $Q(y_1,\cdot),\,Q(y_2,\cdot)\in\mathbb{R}^A$ for any
$y_1,y_2\in S$ and yields
$\spn\bigl(\lse_\delta(Q(\cdot,\cdot))\bigr)\leq\spn(Q)$.
Combining the two bounds,
\[
|u(x,a) - v(x,a)|
\leq \left(L_1 + \frac{K_1\,\spn(Q)}{2}\right)\|m-m'\|_1
\qquad \text{uniformly in } (x,a).
\]
Since $L_{\delta,m} Q - L_{\delta,m'} Q = u - v$
and $\spn(u-v) \leq 2\max_{(x,a)}|(u-v)(x,a)|$, the
claim~\eqref{eq:one-step-perturbation} follows.

\smallskip
\noindent\emph{Step 2 (perturbed fixed-point inequality).}
By the soft Q-AROE, the fixed points satisfy
$L_{\delta,m} Q_m^* = Q_m^* + \rho_m^*\1$ and
$L_{\delta,m'} Q_{m'}^* = Q_{m'}^* + \rho_{m'}^*\1$. Since the span
seminorm is invariant under additive constants, the triangle
inequality in the span seminorm gives
\begin{align*}
	\spn(Q_m^* - Q_{m'}^*)
	= \spn\bigl(L_{\delta,m} Q_m^*
	- L_{\delta,m'} Q_{m'}^*\bigr)
	\leq \spn\bigl(L_{\delta,m} Q_m^*
	- L_{\delta,m'} Q_m^*\bigr)
	+ \spn\bigl(L_{\delta,m'} Q_m^*
	- L_{\delta,m'} Q_{m'}^*\bigr).
\end{align*}
By~\eqref{eq:one-step-perturbation} applied with $Q = Q_m^*$, the
first term is at most
$\bigl(2L_1 + K_1\,\spn(Q_m^*)\bigr)\|m-m'\|_1$. By the uniform
contraction~\eqref{eq:Lhat-uniform-contraction}, the second term is at
most $(1-\varepsilon)\,\spn(Q_m^* - Q_{m'}^*)$. Therefore
\[
\varepsilon\,\spn(Q_m^* - Q_{m'}^*)
\leq \bigl(2L_1 + K_1\,\spn(Q_m^*)\bigr)\|m-m'\|_1 .
\]

\smallskip
\noindent\emph{Step 3 (uniform bound and conclusion).}
By Lemma~\ref{lem:span-Qm}, $\spn(Q_m^*) \leq 2L_1/\varepsilon$.
Substituting,
\[
\spn(Q_m^* - Q_{m'}^*)
\leq \frac{1}{\varepsilon}
\left(2L_1 + \frac{2K_1 L_1}{\varepsilon}\right)\|m-m'\|_1
= \frac{2L_1}{\varepsilon}
\left(1 + \frac{K_1}{\varepsilon}\right)\|m-m'\|_1 . \Halmos
\]
\end{proof}

\begin{remark}
The constant $C_Q$ depends on $\varepsilon$, $K_1$, $L_1$, but
\emph{not} on the temperature $\delta$. This is because the
log-sum-exp operator is $1$-Lipschitz in its input regardless of
$\delta$ (Lemma~\ref{lem:lse_lip}). The temperature enters the
analysis only at the next step, when we pass from the Lipschitz
Q-function to the Lipschitz policy via the softmax.
\end{remark}

The Lipschitz continuity of the softmax policy now follows from
Lemma~\ref{lem:lipschitz-Q} and the softmax Lipschitz bound of
Section~\ref{sec:softmax_lip}. We exploit the shift-invariance of the
softmax to gain a factor $\tfrac{1}{2}$, which will propagate into the
final temperature threshold.

\begin{lemma}[Lipschitz softmax policy]
\label{lem:lipschitz-policy}
Under Assumptions~\ref{ass:lip_primitives}
and~\ref{ass:minor_mfg}, for every $x\in S$ and every
$m,m'\in\Delta(S)$,
\[
\|\pi_m^*(\cdot\mid x) - \pi_{m'}^*(\cdot\mid x)\|_1
\leq \frac{C_Q}{2\delta}\,\|m-m'\|_1 .
\]
\end{lemma}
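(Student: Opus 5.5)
The plan is to compose the two Lipschitz estimates already available: the span-seminorm Lipschitz bound for $m\mapsto Q_m^*$ from Lemma~\ref{lem:lipschitz-Q}, and the shift-invariant softmax bound~\eqref{eq:softmax_span_lip} of Lemma~\ref{lem:softmax_linf_l1}.

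Fix $x\in S$ and $m,m'\in\Delta(S)$. By~\eqref{eq:mfg_softmax},
\[
\pi_m^*(\cdot\mid x)=\sigma_\delta\bigl(Q_m^*(x,\cdot)\bigr),
\qquad
\pi_{m'}^*(\cdot\mid x)=\sigma_\delta\bigl(Q_{m'}^*(x,\cdot)\bigr),
\]
and neither side depends on which representative of $Q_m^*$ or $Q_{m'}^*$ modulo constants is used. Applying~\eqref{eq:softmax_span_lip} with $z=Q_m^*(x,\cdot)$ and $z'=Q_{m'}^*(x,\cdot)$ in $\mathbb{R}^A$ gives
\[
\|\pi_m^*(\cdot\mid x)-\pi_{m'}^*(\cdot\mid x)\|_1\le \frac{1}{2\delta}\,\spn_A\bigl(Q_m^*(x,\cdot)-Q_{m'}^*(x,\cdot)\bigr),
\]
where $\spn_A$ denotes the span taken over actions only.

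The next observation is that the span of a restriction is at most the span of the whole function: the maximum and minimum over $\{x\}\times A$ lie between the maximum and minimum over $S\times A$. Hence
\[
\spn_A\bigl(Q_m^*(x,\cdot)-Q_{m'}^*(x,\cdot)\bigr)\le \spn(Q_m^*-Q_{m'}^*).
\]
Lemma~\ref{lem:lipschitz-Q} then bounds the right-hand side by $C_Q\|m-m'\|_1$. Chaining the three inequalities yields $\frac{C_Q}{2\delta}\|m-m'\|_1$, as claimed.

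There is no real obstacle. The only point needing care is that we must use the shift-invariant form~\eqref{eq:softmax_span_lip} rather than the plain $\|\cdot\|_\infty$ bound. The difference $Q_m^*-Q_{m'}^*$ is controlled only in span, since each function is normalized arbitrarily, and the shift-invariant form is also what produces the factor $\tfrac12$.
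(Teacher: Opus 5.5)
Your proposal is correct and follows the paper's proof essentially step for step. You apply the shift-invariant softmax bound~\eqref{eq:softmax_span_lip} at the fixed state $x$, pass from the span over actions to the span over $S\times A$, and invoke Lemma~\ref{lem:lipschitz-Q}, including the remark that the bound is unaffected by the additive-constant ambiguity of $Q_m^*$ and $Q_{m'}^*$.
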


\begin{proof}{Proof.}
By~\eqref{eq:mfg_softmax},
$\pi_m^*(\cdot\mid x) = \sigma_\delta(Q_m^*(x,\cdot))$. By the shift-invariant form~\eqref{eq:softmax_span_lip} of the softmax Lipschitz bound in
Lemma~\ref{lem:softmax_linf_l1},
\[
\|\sigma_\delta(z)-\sigma_\delta(z')\|_1
\leq \frac{\spn(z-z')}{2\delta}.
\]
Applying this with $z=Q_m^*(x,\cdot)$, $z'=Q_{m'}^*(x,\cdot)$ and
using $\spn\bigl(Q_m^*(x,\cdot)-Q_{m'}^*(x,\cdot)\bigr)
\leq\spn(Q_m^*-Q_{m'}^*)$ together with Lemma~\ref{lem:lipschitz-Q}
gives the claim. Note that the bound is well-posed despite the
additive-constant ambiguity of $Q_m^*$ and $Q_{m'}^*$, since both the
softmax and the span seminorm are shift-invariant.\Halmos
\end{proof}

\subsection{Banach contraction of the mean-field operator}
\label{sec:banach-contraction}

We are now in a position to prove that the mean-field operator
$\mathcal{F}:\Delta(S)\to\Delta(S)$ defined in~\eqref{eq:mfe_op} is a
strict contraction when the temperature $\delta$ is sufficiently
large. The argument splits the difference
$\mathcal{F}(m)(y) - \mathcal{F}(m')(y)$ into three pieces
corresponding to perturbations of the kernel, the policy, and the
population distribution, and bounds each piece in $\ell^1$-norm using
the estimates established so far.

\begin{theorem}[Contraction of $\mathcal{F}$]
\label{thm:F-contraction}
Under Assumptions~\ref{ass:lip_primitives}
and~\ref{ass:minor_mfg}, the mean-field operator
$\mathcal{F}:\Delta(S)\to\Delta(S)$ satisfies
\[
\|\mathcal{F}(m) - \mathcal{F}(m')\|_1
\leq C_{\mathcal{F}}\,\|m - m'\|_1,
\qquad \forall m,m'\in\Delta(S),
\]
where
\[
C_{\mathcal{F}} := K_1 + \frac{K_1\,C_Q}{2\delta}
+ 1 - \varepsilon,
\qquad
C_Q = \frac{2L_1}{\varepsilon}
\Bigl(1 + \frac{K_1}{\varepsilon}\Bigr).
\]
In particular, if $K_1 < \varepsilon$ and
\begin{equation}
\label{eq:delta-large}
\delta \;>\; \frac{K_1\,C_Q}{2(\varepsilon - K_1)}
\;=\;
\frac{K_1 L_1\,(\varepsilon + K_1)}
{\varepsilon^2(\varepsilon - K_1)},
\end{equation}
then $C_{\mathcal{F}} < 1$ and $\mathcal{F}$ is a strict contraction
on the complete metric space $(\Delta(S),\|\cdot\|_1)$.
\end{theorem}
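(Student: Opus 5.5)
The plan is to write $\mathcal{F}(m)=m P_m^{\pi_m^*}$ and $\mathcal{F}(m')=m' P_{m'}^{\pi_{m'}^*}$, and to split the difference by inserting two intermediate kernels:
\begin{align*}
\mathcal{F}(m)-\mathcal{F}(m')
&= m\bigl(P_m^{\pi_m^*}-P_{m'}^{\pi_m^*}\bigr)
+ m\bigl(P_{m'}^{\pi_m^*}-P_{m'}^{\pi_{m'}^*}\bigr)
+ \bigl(m-m'\bigr)P_{m'}^{\pi_{m'}^*}.
\end{align*}
The three terms correspond to perturbations of the kernel, of the policy, and of the population measure. Each is bounded in $\ell^1$ separately, and the bounds are added by the triangle inequality.

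\textbf{Kernel term.} For each $x$, Proposition~\ref{prop:policy-avg-lip} with $\pi=\hat\pi=\pi_m^*$ gives $\|P_m^{\pi_m^*}(\cdot\mid x)-P_{m'}^{\pi_m^*}(\cdot\mid x)\|_1\le K_1\|m-m'\|_1$. Averaging over $x$ against the probability vector $m$, and using convexity of the norm, bounds the first term by $K_1\|m-m'\|_1$.

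\textbf{Policy term.} Apply Proposition~\ref{prop:policy-avg-lip} again, now with a common measure $m'$ and different policies. This bounds the rowwise difference by $K_1\|\pi_m^*(\cdot\mid x)-\pi_{m'}^*(\cdot\mid x)\|_1$. Lemma~\ref{lem:lipschitz-policy} then bounds that by $K_1 C_Q\|m-m'\|_1/(2\delta)$, uniformly in $x$. Averaging against $m$ gives the second contribution $\frac{K_1C_Q}{2\delta}\|m-m'\|_1$.

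\textbf{Measure term.} Apply Corollary~\ref{cor:dobrushin} to the kernel $K=P_{m'}^{\pi_{m'}^*}$. Each row is a convex combination of $P(\cdot\mid x,a,m')\ge\nu$, so $K(\cdot\mid x)\ge\nu$ with mass $\varepsilon$ by Assumption~\ref{ass:minor_mfg}. Hence $\theta_K\le 1-\varepsilon$, and the third term is at most $(1-\varepsilon)\|m-m'\|_1$.

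Summing the three bounds gives $C_{\mathcal{F}}$. For the second claim, $C_{\mathcal{F}}<1$ is equivalent to $K_1C_Q/(2\delta)<\varepsilon-K_1$. When $K_1<\varepsilon$ this is exactly $\delta>K_1C_Q/(2(\varepsilon-K_1))$. Substituting $C_Q$ gives the algebraic identity $K_1C_Q/2=K_1L_1(\varepsilon+K_1)/\varepsilon^2$, which is the displayed threshold. Completeness of $(\Delta(S),\|\cdot\|_1)$ holds because it is a closed subset of $\R^S$, and $\mathcal{F}$ maps it into itself.

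The substantive work is already contained in Lemma~\ref{lem:lipschitz-policy} and Proposition~\ref{prop:policy-avg-lip}. The only point needing care is the choice of split. The measure difference $m-m'$ must act on a single fixed kernel, so that the Dobrushin bound with the minorization applies. The kernel and policy perturbations must be weighted by a probability measure, so that their per-row bounds average without loss. The split above does both, and I expect no other obstacle.
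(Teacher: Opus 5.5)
Your proposal is correct and follows the paper's proof essentially step for step. It uses the same three-term split into kernel, policy and measure perturbations, bounded respectively by Proposition~\ref{prop:policy-avg-lip}, by Proposition~\ref{prop:policy-avg-lip} combined with Lemma~\ref{lem:lipschitz-policy}, and by Corollary~\ref{cor:dobrushin} with the minorization inherited by the policy-averaged kernel, followed by the same algebra for the temperature threshold.
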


\begin{proof}{Proof.}
Fix $m,m'\in\Delta(S)$. By the definition~\eqref{eq:mfe_op} of
$\mathcal{F}$ and the policy-averaged
notation~\eqref{eq:policy_averaged_mfg},
\[
\mathcal{F}(m)(y) - \mathcal{F}(m')(y)
=
\sum_{x\in S}\Bigl[
P_m^{\pi_m^{*}}(y\mid x)\,m(x)
- P_{m'}^{\pi_{m'}^{*}}(y\mid x)\,m'(x)
\Bigr].
\]
Adding and subtracting two intermediate terms decomposes the
summand:
\begin{multline*}
P_m^{\pi_m^{*}}(y\mid x)\,m(x)
- P_{m'}^{\pi_{m'}^{*}}(y\mid x)\,m'(x)
=
 \underbrace{\Bigl[P_m^{\pi_m^{*}}(y\mid x)
- P_{m'}^{\pi_m^{*}}(y\mid x)\Bigr]m(x)}_{\text{(I) kernel term}} \\
+
\underbrace{\Bigl[P_{m'}^{\pi_m^{*}}(y\mid x)
- P_{m'}^{\pi_{m'}^{*}}(y\mid x)\Bigr]m(x)}_{\text{(II) policy term}}
\;+\;
\underbrace{P_{m'}^{\pi_{m'}^{*}}(y\mid x)
\bigl[m(x) - m'(x)\bigr]}_{\text{(III) measure term}} .
\end{multline*}
Summing over $y\in S$ and applying the triangle inequality,
$\|\mathcal{F}(m) - \mathcal{F}(m')\|_1
\leq \mathrm{(I)} + \mathrm{(II)} + \mathrm{(III)}$,
where $\mathrm{(I)}$, $\mathrm{(II)}$, $\mathrm{(III)}$ denote the
$\ell^1$-norms of the corresponding sums over $x$. We bound each term
in turn.

\medskip
\noindent\emph{Term (I) --- kernel perturbation.}
By Proposition~\ref{prop:policy-avg-lip} with the same policy
$\pi=\hat{\pi}=\pi_m^{*}$ and populations $m,m'$,
\[
\mathrm{(I)}
\leq \sum_{x} m(x)\,
\bigl\|P_m^{\pi_m^{*}}(\cdot\mid x)
- P_{m'}^{\pi_m^{*}}(\cdot\mid x)\bigr\|_1
\leq K_1\|m-m'\|_1 \sum_{x} m(x)
= K_1\|m-m'\|_1 .
\]

\medskip
\noindent\emph{Term (II) --- policy perturbation.}
By Proposition~\ref{prop:policy-avg-lip} with the same population
$m=\hat{m}=m'$ and policies $\pi_m^{*},\pi_{m'}^{*}$, followed by
Lemma~\ref{lem:lipschitz-policy} (uniformly in $x$),
\begin{align*}
	\mathrm{(II)}
	\leq \sum_{x} m(x)\,
	\bigl\|P_{m'}^{\pi_m^{*}}(\cdot\mid x)
	- P_{m'}^{\pi_{m'}^{*}}(\cdot\mid x)\bigr\|_1
	\leq K_1 \sum_{x} m(x)\,
	\bigl\|\pi_m^{*}(\cdot\mid x)-\pi_{m'}^{*}(\cdot\mid x)\bigr\|_1
	\leq \frac{K_1 C_Q}{2\delta}\,\|m-m'\|_1 .
\end{align*}

\medskip
\noindent\emph{Term (III) --- measure perturbation.}
The policy-averaged kernel $P_{m'}^{\pi_{m'}^{*}}$ is exactly the
transition kernel on $S$ that a single agent experiences when the
population distribution is fixed at $m'$ and the agent uses the
equilibrium policy $\pi_{m'}^*$; for brevity write
$K_{m'}(y\mid x) := P_{m'}^{\pi_{m'}^{*}}(y\mid x)$. By
Assumption~\ref{ass:minor_mfg}, $P(\cdot\mid x,a,m') \geq \nu$
pointwise for every $a$; averaging over $\pi_{m'}^*(\cdot\mid x)$, the
same minorization is inherited by $K_{m'}$:
\[
K_{m'}(y\mid x)
= \sum_{a\in A}\pi_{m'}^{*}(a\mid x)\,P(y\mid x,a,m')
\geq \nu(y)\sum_{a\in A}\pi_{m'}^*(a\mid x) = \nu(y),
\quad \forall x,y\in S.
\]
Term (III) is precisely $\|m K_{m'} - m' K_{m'}\|_1$, so
Corollary~\ref{cor:dobrushin}, applied with the minorizing measure
$\nu$ of total mass $\varepsilon$, yields
\[
\mathrm{(III)}
= \|m K_{m'} - m' K_{m'}\|_1
\leq \theta_{K_{m'}}\,\|m-m'\|_1
\leq (1-\varepsilon)\,\|m-m'\|_1 .
\]

\medskip
\noindent\emph{Combining.} Adding the three bounds,
\[
\|\mathcal{F}(m) - \mathcal{F}(m')\|_1
\leq \Bigl(K_1 + \frac{K_1 C_Q}{2\delta}
+ 1-\varepsilon\Bigr)\|m-m'\|_1
= C_{\mathcal{F}}\,\|m-m'\|_1 .
\]
For $\mathcal{F}$ to be a strict contraction we need
$C_{\mathcal{F}} < 1$, i.e.\
$K_1 + K_1 C_Q/(2\delta) < \varepsilon$. If $K_1 < \varepsilon$,
this is equivalent to~\eqref{eq:delta-large}.\Halmos
\end{proof}

\begin{remark}[On the smallness condition]
\label{rem:smallness}
The contraction condition~\eqref{eq:delta-large} requires two things:
the kernel Lipschitz constant $K_1$ must be small relative to the
minorization mass ($K_1 < \varepsilon$), and the temperature $\delta$
must exceed $K_1 C_Q/(2(\varepsilon - K_1))$. The first condition is
structural: when the kernel depends too strongly on the population
distribution, the mean-field operator is genuinely non-contractive
and the equilibrium need not be unique. The second condition is the
trade-off purchased by entropy regularization: by smoothing the
policy at temperature $\delta$, its sensitivity to $m$ decays at rate
$1/\delta$, and choosing $\delta$ large compensates for the remaining
policy-sensitivity term in $C_{\mathcal{F}}$. Note that the
threshold~\eqref{eq:delta-large} scales with $K_1$: when the dynamics
are weakly coupled to the action and the population ($K_1$ small),
even small temperatures suffice, and the regularized equilibrium is
close to a hard-max equilibrium. As $\delta\downarrow 0$,
$C_{\mathcal{F}}$ blows up (unless $K_1 = 0$, in which case the
dynamics do not react to the policy perturbation at all) and the
Banach argument fails --- consistent with the well-known fact that
classical mean-field equilibria need not be unique.
\end{remark}

The contraction of $\mathcal{F}$ now yields existence, uniqueness, and
a constructive computational scheme via Banach iteration.

\begin{corollary}[Existence, uniqueness, and computability of the mean-field equilibrium]
\label{cor:MFNE}
Under Assumptions~\ref{ass:lip_primitives}
and~\ref{ass:minor_mfg}, suppose $K_1<\varepsilon$ and $\delta$
satisfies~\eqref{eq:delta-large}, so that $C_{\mathcal{F}} < 1$ in
Theorem~\ref{thm:F-contraction}. Then:
\begin{enumerate}
\item The map $\mathcal{F}$ has a unique fixed point $m^*\in\Delta(S)$. The canonical equilibrium policy $\pi^*:=\pi^*_{m^*}$ is given by the softmax~\eqref{eq:mfg_softmax}, and $(\pi^*,m^*)$ is a mean-field equilibrium. Every mean-field equilibrium $(\pi,m)$ satisfies $m=m^*$, and
\(
\pi(\cdot\mid x)=\pi^*(\cdot\mid x)\) whenever $m^*(x)>0$.
\item For any initial $m^{(0)} \in \Delta(S)$, the iterates
$m^{(k+1)} := \mathcal{F}(m^{(k)})$ converge to $m^*$ geometrically:
\[
\|m^{(k)} - m^*\|_1 \leq C_{\mathcal{F}}^{\,k}\,\|m^{(0)} - m^*\|_1 .
\]
\end{enumerate}
\end{corollary}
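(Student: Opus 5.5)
The plan is to apply the Banach fixed-point theorem to $\mathcal{F}$ on $(\Delta(S),\|\cdot\|_1)$ and then handle the equilibrium characterization separately. First, $\Delta(S)$ is a closed subset of $\mathbb{R}^S$, so it is complete under $\|\cdot\|_1$. Theorem~\ref{thm:F-contraction} gives $C_{\mathcal{F}}<1$ under the stated hypotheses. Banach's theorem then yields a unique fixed point $m^*$ and the geometric estimate $\|m^{(k)}-m^*\|_1\le C_{\mathcal{F}}^k\|m^{(0)}-m^*\|_1$. The estimate follows by induction from $\|\mathcal{F}(m^{(k)})-\mathcal{F}(m^*)\|_1\le C_{\mathcal{F}}\|m^{(k)}-m^*\|_1$ together with $\mathcal{F}(m^*)=m^*$. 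This settles part~2.

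For existence in part~1, set $\pi^*:=\pi^*_{m^*}$. Optimality holds because $\pi^*_{m^*}$ is the softmax built on a solution of the soft Q-AROE at $m^*$. Corollary~\ref{cor:Lhat_soft_banach} applies uniformly in $m$ by Assumption~\ref{ass:minor_mfg}, and together with the verification argument in Section~\ref{sec:soft_bellman_eqn} it shows this policy is optimal. Consistency is exactly $\mathcal{F}(m^*)=m^*$, as noted after Definition~\ref{def:mfe_op}.

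The main obstacle is uniqueness over arbitrary equilibria $(\pi,m)$, because $\pi$ need not equal $\pi^*_m$ a priori. I would show that any optimal policy at $m$ agrees with $\pi^*_m$ at every state $x$ with $m(x)>0$. Fix such a $(\pi,m)$ and let $(\rho^*_m,Q^*_m)$ solve the soft Q-AROE at $m$, with $V(y):=\lse_\delta(Q^*_m(y,\cdot))$. By Proposition~\ref{prop:softmax}, for each $x$ we have
\[\sum_a\pi(a\mid x)\bigl(Q^*_m(x,a)-\delta\log\pi(a\mid x)\bigr)=V(x)-g(x),\qquad g(x)\ge0,\]
with $g(x)=0$ iff $\pi(\cdot\mid x)=\pi^*_m(\cdot\mid x)$. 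In fact $g(x)=\delta\,\mathrm{KL}(\pi(\cdot\mid x)\,\|\,\pi^*_m(\cdot\mid x))$.

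Substituting the soft Q-AROE and averaging gives
\[\rho^*_m+V(x)-g(x)=r^\pi_m(x)+\delta H(\pi(\cdot\mid x))+\sum_yP^\pi_m(y\mid x)V(y).\]
Integrate this against the invariant distribution $m$ of $P^\pi_m$; invariance holds by consistency. The $V$ terms cancel, leaving
\[\rho^\pi_\delta=\rho^*_m-\sum_xm(x)g(x).\]
Here I use that under minorization the chain is ergodic with unique invariant law $m$, and that the average reward equals the stationary average. Optimality gives $\rho^\pi_\delta=\rho^*_m$, so $\sum_xm(x)g(x)=0$. Hence $\pi(\cdot\mid x)=\pi^*_m(\cdot\mid x)$ whenever $m(x)>0$.

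Consequently $P^\pi_m(\cdot\mid x)=P^{\pi^*_m}_m(\cdot\mid x)$ on the support of $m$. Then
\[m=\sum_x m(x)P^\pi_m(\cdot\mid x)=\sum_x m(x)P^{\pi^*_m}_m(\cdot\mid x)=\mathcal{F}(m),\]
so $m=m^*$ by uniqueness of the fixed point. Finally, $\pi=\pi^*$ on $\{m^*>0\}$.
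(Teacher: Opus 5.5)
Your proposal is correct and follows essentially the same route as the paper. You use Banach's theorem on $(\Delta(S),\|\cdot\|_1)$ for the fixed point and the geometric rate; then, for an arbitrary equilibrium $(\pi,m)$, you average the soft Q-AROE against $m(x)\pi(a\mid x)$ using $mP^\pi_m=m$, so that the nonnegative gap $g$ (the paper's $d$) vanishes on the support of $m$, which forces $\mathcal{F}(m)=m$ and hence $m=m^*$.
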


\begin{proof}{Proof.}
$(\Delta(S),\|\cdot\|_1)$
is a complete metric space (Section~\ref{sec:policies}). By Theorem~\ref{thm:F-contraction},
$\mathcal{F}$ is a strict contraction on this space, so Banach's
fixed point theorem yields a unique fixed point $m^* \in \Delta(S)$
with $\mathcal{F}(m^*) = m^*$ and geometric convergence of iterates at
rate $C_{\mathcal{F}}$. Setting $\pi^* := \pi_{m^*}^*$, the pair
$(\pi^*, m^*)$ satisfies both conditions of Definition~\ref{def:mfe} by
construction of $\mathcal{F}$: optimality holds because
$\pi^*_{m^*}$ is the soft optimal policy for the MDP at population
$m^*$, and consistency holds because $m^* = \mathcal{F}(m^*)$.

Let $(\pi,m)$ be any mean-field equilibrium, and let $(\rho,Q)$ solve the soft Q-AROE~\eqref{eq:soft_Q_AROE} at population $m$, so that $\rho$ is the optimal entropy-regularized average reward (Section~\ref{sec:soft_bellman_eqn}). Define
\[
d(x):=\lse_\delta(Q(x,\cdot))-\sum_{a\in A}\pi(a\mid x)\,Q(x,a)-\delta H(\pi(\cdot\mid x)),\qquad x\in S.
\]
By Proposition~\ref{prop:softmax}, $d(x)\ge0$, with equality precisely when $\pi(\cdot\mid x)=\pi^*_m(\cdot\mid x)$. By consistency, $m$ is invariant for $P^\pi_m$; by uniform minorization and Corollary~\ref{cor:dobrushin}, it is the unique invariant distribution, so the average reward of $\pi$ from every initial state equals its stationary reward under $m$. Averaging~\eqref{eq:soft_Q_AROE} against $m(x)\pi(a\mid x)$ and using $mP^\pi_m=m$ gives
\[
\sum_{x\in S}m(x)\bigl[r^\pi_m(x)+\delta H(\pi(\cdot\mid x))\bigr]=\rho-\sum_{x\in S}m(x)\,d(x).
\]
By optimality, the left-hand side equals $\rho$. Thus $d=0$ on the support of $m$, so $\pi=\pi^*_m$ there and
\[
\mathcal{F}(m)=mP^{\pi^*_m}_m=mP^\pi_m=m.
\]
Uniqueness of the fixed point yields $m=m^*$, and hence the asserted policy agreement.\Halmos
\end{proof}
\subsection{Algorithmic interpretation}
\label{sec:algorithm}

Corollary~\ref{cor:MFNE} yields a constructive algorithm to
compute the mean-field equilibrium:
\begin{enumerate}
  \item Initialize $m^{(0)}\in\Delta(S)$ arbitrarily.
  \item At iteration $k$, given $m^{(k)}$:
    \begin{enumerate}
      \item Solve the soft Q-AROE~\eqref{eq:soft_Q_AROE}
        with kernel $P(\cdot\mid\cdot,\cdot,m^{(k)})$ and reward
        $r(\cdot,\cdot,m^{(k)})$ to obtain
        $Q_{m^{(k)}}^{*}$ (up to an additive constant).
      \item Form the softmax policy $\pi_{m^{(k)}}^{*}$ in closed form
        via~\eqref{eq:mfg_softmax}.
      \item Compute $m^{(k+1)}=\mathcal{F}(m^{(k)})$ by pushing
        $m^{(k)}$ forward through the policy-averaged kernel
        $P_{m^{(k)}}^{\pi_{m^{(k)}}^{*}}$.
    \end{enumerate}
  \item Return $m^{*}=\lim_k m^{(k)}$ and
    $\pi^{*}=\pi_{m^{*}}^{*}$.
\end{enumerate}
Each step is computable in polynomial time in $|S|$ and $|A|$ (step 2a
is a Banach iteration in its own right, by
Corollary~\ref{cor:Lhat_soft_banach}). The overall algorithm converges
geometrically at rate $C_{\mathcal{F}}$. Steps 2a and 2c are the only ones that use the model $(P,r)$; in Section~\ref{sec:learning} they are replaced by soft Q-learning and by an empirical estimate of the kernel, respectively.

\section{Model-Free Learning of the Mean-Field Equilibrium}
\label{sec:learning}

The algorithm of Section~\ref{sec:algorithm} uses the model $(P,r)$ in exactly two places: to solve the soft Q-AROE at the current population (step~2a) and to push the population forward through the policy-averaged kernel (step~2c). In this section we replace both steps by sample-based counterparts. Throughout, the learner does not know $P$ or $r$ but has access to a \emph{simulator} of the mean-field environment, as in \citet{GuHuXuZh19} and \citet{AnKaSa23}: for any query $(x,a,m)\in S\times A\times\Delta(S)$ the simulator returns the reward $r(x,a,m)$ and an independent sample $Y\sim P(\cdot\mid x,a,m)$. The soft Q-function at a frozen population is then learned by a synchronous soft Q-learning scheme (Section~\ref{sec:softQL}), and the population update is performed with an empirical estimate of the transition kernel (Section~\ref{sec:mfg-learning}). The main results are a finite-sample bound for soft Q-learning in the span seminorm (Theorem~\ref{thm:softQL-rate}) and a convergence rate for the resulting model-free mean-field iteration (Theorem~\ref{thm:mfg-learning}).

Two features of the preceding theory make this possible with little additional effort. First, the softmax policy depends on $Q$ only through its equivalence class modulo constants, so it suffices to learn $Q^*_\delta$ in the span seminorm; this is precisely what relative value iteration, and its stochastic version below, delivers. Second, the minorization decomposition $P=\widehat P+\nu$ that drives all the contraction proofs of Sections~\ref{sec:classical}--\ref{sec:entropy} can be used once more, in the analysis only, to convert the span-seminorm contraction of $L_\delta$ into a supremum-norm contraction of an auxiliary operator, so that the finite-sample theory of \citet{ChMaShSh20} for norm-contractive stochastic approximation applies verbatim. The measure $\nu$ never enters the algorithm.

\subsection{Synchronous soft Q-learning at a fixed population}
\label{sec:softQL}

In this subsection the population is frozen and we consider a single entropy-regularized average-reward MDP $(S,A,P,r)$ with temperature $\delta>0$. We assume the minorization condition (Assumption~\ref{def:minor}) with minorizing measure $\nu$ of mass $\varepsilon\in(0,1)$, and we write $d:=|S||A|$; to avoid trivialities we assume $d\ge 2$. Let $(\rho^*_\delta,Q^*_\delta)$ be a solution of the soft Q-AROE~\eqref{eq:soft_Q_AROE}; $Q^*_\delta$ is unique up to an additive constant, and every statement below is invariant under $Q^*_\delta\mapsto Q^*_\delta+c\1$. For $Q\in\R^{S\times A}$ we use the shorthand
\[
V_Q(y):=\lse_\delta\bigl(Q(y,\cdot)\bigr)=\delta\log\sum_{b\in A}\exp\bigl(Q(y,b)/\delta\bigr),\qquad y\in S,
\]
so that $(L_\delta Q)(x,a)=r(x,a)+\sum_y P(y\mid x,a)V_Q(y)$, and we recall from Remark~\ref{rem:lse} and Lemma~\ref{lem:lse_lip} that $V_{Q+s\1}=V_Q+s$ for $s\in\R$ and that $|V_Q(y)-V_{Q'}(y)|\le\max_b|Q(y,b)-Q'(y,b)|$.

The synchronous sampling protocol is used: at every iteration the learner queries the simulator once at every state-action pair. The resulting algorithm is the entropy-regularized analogue of RVI Q-learning \citep{AbBeBo01}.

\begin{algorithm}[t]
\caption{Synchronous soft RVI Q-learning}
\label{alg:softQL}
\begin{algorithmic}[1]
\Require temperature $\delta>0$; initial $Q_0\in\R^{S\times A}$; step sizes $(\alpha_n)_{n\ge0}\subset(0,1]$; offset function $f:\R^{S\times A}\to\R$ (e.g.\ $f(Q)=Q(x_0,a_0)$, $f(Q)=\frac1d\sum_{x,a}Q(x,a)$, or $f\equiv 0$)
\For{$n=0,1,2,\dots$}
  \For{each $(x,a)\in S\times A$}
    \State query the simulator at $(x,a)$: observe $r(x,a)$ and $Y_n^{x,a}\sim P(\cdot\mid x,a)$
    \State form the target $\tau_n(x,a):=r(x,a)+\delta\log\sum_{b\in A}\exp\bigl(Q_n(Y_n^{x,a},b)/\delta\bigr)-f(Q_n)$
    \State $Q_{n+1}(x,a)\gets(1-\alpha_n)\,Q_n(x,a)+\alpha_n\,\tau_n(x,a)$
  \EndFor
\EndFor
\Ensure policy $\pi_n:=\sigma_\delta(Q_n)$, i.e.\ $\pi_n(a\mid x)=\exp(Q_n(x,a)/\delta)\big/\sum_{b}\exp(Q_n(x,b)/\delta)$
\end{algorithmic}
\end{algorithm}

We assume that the samples $\{Y_n^{x,a}:n\ge0,\ (x,a)\in S\times A\}$ are independent, with $Y_n^{x,a}\sim P(\cdot\mid x,a)$, and independent of $Q_0$. Let $\calF_n:=\sigma\bigl(Q_0,\,Y_j^{x,a}:\ j<n,\ (x,a)\in S\times A\bigr)$, so that $Q_n$ is $\calF_n$-measurable. In operator form, Algorithm~\ref{alg:softQL} reads
\begin{equation}\label{eq:softQL-operator-form}
Q_{n+1}=(1-\alpha_n)Q_n+\alpha_n\bigl[L_\delta Q_n+M_{n+1}-f(Q_n)\1\bigr],
\end{equation}
where
\[
M_{n+1}(x,a):=V_{Q_n}(Y_n^{x,a})-\sum_{y\in S}P(y\mid x,a)V_{Q_n}(y),\qquad (x,a)\in S\times A,
\]
is the sampling noise. The offset $f(Q_n)$ plays the role of the normalization $L_\delta Q(x_0,a_0)$ in soft relative value iteration (Corollary~\ref{cor:Lhat_soft_banach}): without it the iterates drift by roughly $\rho^*_\delta\sum_{j<n}\alpha_j$. For the quantities we care about, the span of the error and the softmax policy, the offset is irrelevant.

\begin{lemma}[The offset does not affect the span or the policy]
\label{lem:offset-invariance}
Let $(Q_n)$ and $(Q_n')$ be generated by Algorithm~\ref{alg:softQL} from the same $Q_0$, the same samples and the same step sizes, but with offset functions $f$ and $f'$, respectively. Then $Q_n-Q_n'\in\R\1$ for every $n\ge 0$. Consequently, $\spn(Q_n-Q)=\spn(Q_n'-Q)$ for every $Q\in\R^{S\times A}$ and $\sigma_\delta(Q_n)=\sigma_\delta(Q_n')$.
\end{lemma}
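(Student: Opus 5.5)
We argue by induction on $n$, proving that $Q_n'=Q_n+c_n\1$ for a real scalar $c_n$ that depends on the samples but is the same at every state-action pair. The base case $n=0$ holds with $c_0=0$, since both runs start from the same $Q_0$.

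For the inductive step, assume $Q_n'=Q_n+c_n\1$. We use the shift-equivariance of the log-sum-exp (Remark~\ref{rem:lse}), namely $V_{Q+s\1}=V_Q+s$. This gives, for every $(x,a)$ and the \emph{same} sample $Y_n^{x,a}$ used in both runs,
\[
\delta\log\sum_{b\in A}\exp\bigl(Q_n'(Y_n^{x,a},b)/\delta\bigr)=\delta\log\sum_{b\in A}\exp\bigl(Q_n(Y_n^{x,a},b)/\delta\bigr)+c_n .
\]
The reward $r(x,a)$ is identical in both runs. Hence the targets satisfy
\[
\tau_n'(x,a)=\tau_n(x,a)+c_n-f'(Q_n')+f(Q_n).
\]
The correction $c_n-f'(Q_n')+f(Q_n)$ does not depend on $(x,a)$. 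Substituting into the update, with the same step size $\alpha_n$ in both runs, yields
\[
Q_{n+1}'=(1-\alpha_n)(Q_n+c_n\1)+\alpha_n\bigl(\tau_n+(c_n-f'(Q_n')+f(Q_n))\1\bigr)=Q_{n+1}+c_{n+1}\1,
\]
where $c_{n+1}:=c_n-\alpha_n\bigl(f'(Q_n')-f(Q_n)\bigr)$. This closes the induction. No assumption on $f$ or $f'$ is needed, since their values enter only as scalars multiplying $\1$.

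The two consequences follow directly. First, $Q_n'-Q=(Q_n-Q)+c_n\1$, and the span seminorm is invariant under adding constants, so $\spn(Q_n'-Q)=\spn(Q_n-Q)$. Second, the softmax is shift-invariant, $\sigma_\delta(z+c\1)=\sigma_\delta(z)$, and this applies row by row to each $Q_n(x,\cdot)$ with the same constant $c_n$, so $\sigma_\delta(Q_n')=\sigma_\delta(Q_n)$.

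The only point that needs care is that the coupling uses identical samples $Y_n^{x,a}$ in both runs. That is exactly what the hypothesis provides, so the argument has no real obstacle.
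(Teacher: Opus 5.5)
Your proof is correct and follows the paper's argument essentially verbatim: induction on $n$, where the shift-equivariance $V_{Q+s\1}=V_Q+s$ under the common samples makes the two runs differ by a scalar multiple of $\1$ at each step. The two consequences then follow from the shift-invariance of $\spn(\cdot)$ and of $\sigma_\delta$.
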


\begin{proof}{Proof.}
We argue by induction; the case $n=0$ is trivial. If $Q_n=Q_n'+s_n\1$ for some $s_n\in\R$, then $V_{Q_n}=V_{Q_n'}+s_n$, so that
\[
Q_{n+1}-Q_{n+1}'
=(1-\alpha_n)s_n\1+\alpha_n\bigl[s_n-f(Q_n)+f'(Q_n')\bigr]\1\in\R\1 .
\]
The remaining claims follow from the invariance of $\spn(\cdot)$ and of $\sigma_\delta$ under additive constants (Lemma~\ref{lem:softmax_linf_l1}).\Halmos
\end{proof}

By Lemma~\ref{lem:offset-invariance} we may analyze the un-normalized recursion ($f\equiv0$). The key observation is that, after subtracting a suitable scalar sequence, the error $Q_n-Q^*_\delta$ follows a stochastic approximation whose drift is a supremum-norm contraction. To see this, decompose $L_\delta$ according to $P=\widehat P+\nu$: for $Q\in\R^{S\times A}$ let
\begin{equation}\label{eq:hatL-def}
(\widehat L_\delta Q)(x,a):=r(x,a)+\sum_{y\in S}\widehat P(y\mid x,a)\,V_Q(y),
\qquad
c(Q):=\sum_{y\in S}\nu(y)\,V_Q(y),
\end{equation}
so that $L_\delta Q=\widehat L_\delta Q+c(Q)\1$. Note that $\widehat L_\delta$ is a $(1-\varepsilon)$-contraction in $\|\cdot\|_\infty$: for each $(x,a)$, $|(\widehat L_\delta Q-\widehat L_\delta Q')(x,a)|\le\sum_y\widehat P(y\mid x,a)|V_Q(y)-V_{Q'}(y)|\le(1-\varepsilon)\|Q-Q'\|_\infty$ by Lemma~\ref{lem:lse_lip}. This is the operator of the supremum-norm approach mentioned in Section~\ref{sec:span}; it cannot be used by the learner, since it involves $\nu$, but it can be used by the analyst.

Before stating the reduction we explain the idea behind it. Two obstacles prevent a
direct application of the stochastic approximation theory of \citet{ChMaShSh20} to the
error $Q_n-Q^*_\delta$: that theory requires the drift to be a contraction in a norm,
whereas $L_\delta$ contracts only in the span seminorm and does not contract constant
functions at all ($L_\delta(Q+s\1)=L_\delta Q+s\1$); and the error does not even
converge, since $L_\delta Q^*_\delta=Q^*_\delta+\rho^*_\delta\1$ implies that each step
pushes $Q_n$ by roughly $\alpha_n\rho^*_\delta\1$, so that $Q_n-Q^*_\delta$ grows along
$\1$ like $\rho^*_\delta\sum_{j<n}\alpha_j$. Both obstacles concern the one-dimensional
direction spanned by $\1$, which is invisible to the two quantities of interest, the span
of the error and the softmax policy. This suggests splitting the error as
\begin{equation}\label{eq:error-split}
Q_n-Q^*_\delta=\kappa_n\1+\widetilde E_n ,
\end{equation}
where the scalar $\kappa_n$ collects the component along $\1$, and choosing $\kappa_n$ so
that the remainder $\widetilde E_n$ follows a stochastic approximation whose drift is a
supremum-norm contraction; nothing is lost in doing so, because
$\spn(Q_n-Q^*_\delta)=\spn(\widetilde E_n)\le2\|\widetilde E_n\|_\infty$ and
$\sigma_\delta(Q_n)=\sigma_\delta(Q^*_\delta+\widetilde E_n)$ whatever the choice of
$\kappa_n$. The decomposition $L_\delta=\widehat L_\delta+c(\cdot)\1$ singles out the
right choice. Subtracting $Q^*_\delta$ from~\eqref{eq:softQL-operator-form} and using the
soft Q-AROE, the shift-equivariance of $L_\delta$ and this decomposition, one finds that
$Q_{n+1}-Q^*_\delta$ is the sum of
$(1-\alpha_n)\widetilde E_n+\alpha_n\bigl[\widehat L_\delta(Q^*_\delta+\widetilde E_n)-\widehat L_\delta Q^*_\delta\bigr]+\alpha_nM_{n+1}$
and of a multiple of $\1$ with coefficient
$\kappa_n+\alpha_n\bigl(\rho^*_\delta+c(Q^*_\delta+\widetilde E_n)-c(Q^*_\delta)\bigr)$.
Defining $\kappa_{n+1}$ to be this coefficient---that is, letting $\kappa_n$ absorb every
multiple of $\1$ that the recursion generates---leaves for $\widetilde E_{n+1}$ exactly
the first sum, which is a stochastic approximation driven by the martingale-difference
noise $M_{n+1}$ and with drift
$G(E):=\widehat L_\delta(Q^*_\delta+E)-\widehat L_\delta Q^*_\delta$, a
$(1-\varepsilon)$-contraction in $\|\cdot\|_\infty$ with fixed point $E=0$. The increment
of $\kappa_n$ has a transparent meaning: $\rho^*_\delta$ is the systematic drift that is
present even at the exact solution (the drift removed by the offset in relative value
iteration), and $c(Q^*_\delta+\widetilde E_n)-c(Q^*_\delta)$ is the $\nu$-average of the
effect of the current error on the continuation values, i.e.\ the part of the response of
$L_\delta$ to the error that is uniform over $(x,a)$ and would otherwise be carried by
$\widetilde E_n$ in the one direction in which $L_\delta$ does not contract. The initial
value $\kappa_0$ is chosen to minimize $\|\widetilde E_0\|_\infty$.

\begin{lemma}[Reduction to a sup-norm contractive stochastic approximation]
\label{lem:reduction}
Let $(Q_n)$ be generated by Algorithm~\ref{alg:softQL} with $f\equiv0$, and let $Q^*:=Q^*_\delta$. Define $\kappa_0:=\tfrac12\bigl[\max_{(x,a)}(Q_0-Q^*)(x,a)+\min_{(x,a)}(Q_0-Q^*)(x,a)\bigr]$, $\widetilde E_0:=Q_0-Q^*-\kappa_0\1$, and recursively, for $n\ge0$,
\begin{equation}\label{eq:kappa-rec}
\kappa_{n+1}:=\kappa_n+\alpha_n\bigl(\rho^*_\delta+c(Q^*+\widetilde E_n)-c(Q^*)\bigr),
\qquad
\widetilde E_{n+1}:=Q_{n+1}-Q^*-\kappa_{n+1}\1 .
\end{equation}
Let $G(E):=\widehat L_\delta(Q^*+E)-\widehat L_\delta Q^*$ for $E\in\R^{S\times A}$, and let $M_{n+1}$ be as in~\eqref{eq:softQL-operator-form}. Then:
\begin{enumerate}[label=(\alph*)]
\item for every $n\ge0$,
\begin{equation}\label{eq:SA-form}
\widetilde E_{n+1}=\widetilde E_n+\alpha_n\bigl(G(\widetilde E_n)-\widetilde E_n+M_{n+1}\bigr);
\end{equation}
\item $G(0)=0$ and $\|G(E)-G(E')\|_\infty\le(1-\varepsilon)\|E-E'\|_\infty$ for all $E,E'$;
\item $\widetilde E_n$ and $\kappa_n$ are $\calF_n$-measurable, $\E[M_{n+1}\mid\calF_n]=0$, and $\|M_{n+1}\|_\infty\le\spn(Q^*)+2\|\widetilde E_n\|_\infty$; in particular
\[
\E\bigl[\|M_{n+1}\|_\infty^2\,\big|\,\calF_n\bigr]\le 2\,\spn(Q^*)^2+8\,\|\widetilde E_n\|_\infty^2 ;
\]
\item $\spn(Q_n-Q^*)=\spn(\widetilde E_n)\le2\|\widetilde E_n\|_\infty$ for all $n\ge0$, and $\|\widetilde E_0\|_\infty=\tfrac12\spn(Q_0-Q^*)$.
\end{enumerate}
\end{lemma}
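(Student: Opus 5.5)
The plan is to verify the four parts by direct algebra from the operator form~\eqref{eq:softQL-operator-form} with $f\equiv0$. We combine the soft Q-AROE $L_\delta Q^*=Q^*+\rho^*_\delta\1$, the shift-equivariance $L_\delta(Q+s\1)=L_\delta Q+s\1$, and the decomposition $L_\delta=\widehat L_\delta+c(\cdot)\1$ from~\eqref{eq:hatL-def}.

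\emph{Part (a).} Write $Q_n=Q^*+\kappa_n\1+\widetilde E_n$. Shift-equivariance gives $L_\delta Q_n=L_\delta(Q^*+\widetilde E_n)+\kappa_n\1$. The decomposition then gives
\[
L_\delta(Q^*+\widetilde E_n)=\widehat L_\delta(Q^*+\widetilde E_n)+c(Q^*+\widetilde E_n)\1 .
\]
The AROE gives $Q^*=\widehat L_\delta Q^*+c(Q^*)\1-\rho^*_\delta\1$. Substituting into $Q_{n+1}-Q^*=(1-\alpha_n)(Q_n-Q^*)+\alpha_n(L_\delta Q_n-Q^*)+\alpha_nM_{n+1}$ and collecting terms, the non-constant part is $(1-\alpha_n)\widetilde E_n+\alpha_n G(\widetilde E_n)+\alpha_nM_{n+1}$. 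The multiple of $\1$ is exactly $\kappa_n+\alpha_n(\rho^*_\delta+c(Q^*+\widetilde E_n)-c(Q^*))=\kappa_{n+1}$. Subtracting $\kappa_{n+1}\1$ yields~\eqref{eq:SA-form}.

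\emph{Part (b).} $G(0)=0$ holds trivially. The contraction bound is the $(1-\varepsilon)$ sup-norm contraction of $\widehat L_\delta$ already noted after~\eqref{eq:hatL-def}, since $G(E)-G(E')=\widehat L_\delta(Q^*+E)-\widehat L_\delta(Q^*+E')$.

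\emph{Part (c).} Measurability follows by induction from~\eqref{eq:kappa-rec}, since $\kappa_{n+1}$ depends on $\kappa_n,\widetilde E_n$ and $Q_{n+1}$ is $\calF_{n+1}$-measurable. The identity $\E[M_{n+1}\mid\calF_n]=0$ holds because $Y_n^{x,a}$ is independent of $\calF_n$ with law $P(\cdot\mid x,a)$ and $V_{Q_n}$ is $\calF_n$-measurable.

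For the noise bound, which is the only mildly delicate step, note that $M_{n+1}(x,a)$ is a value of $V_{Q_n}$ minus an average of $V_{Q_n}$, so $|M_{n+1}(x,a)|\le\spn(V_{Q_n})$. Shift-invariance gives $\spn(V_{Q_n})=\spn(V_{Q^*+\widetilde E_n})$. Lemma~\ref{lem:lse_lip} gives $|V_{Q}(y_1)-V_Q(y_2)|\le\max_b|Q(y_1,b)-Q(y_2,b)|$, hence $\spn(V_Q)\le\spn(Q)$. Then
\[
\spn(Q^*+\widetilde E_n)\le\spn(Q^*)+\spn(\widetilde E_n)\le\spn(Q^*)+2\|\widetilde E_n\|_\infty .
\]
Squaring with $(u+v)^2\le2u^2+2v^2$ gives the conditional second-moment bound $2\spn(Q^*)^2+8\|\widetilde E_n\|_\infty^2$.

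\emph{Part (d).} Since $Q_n-Q^*-\widetilde E_n\in\R\1$, we have $\spn(Q_n-Q^*)=\spn(\widetilde E_n)\le2\|\widetilde E_n\|_\infty$. The choice of $\kappa_0$ as the midrange of $Q_0-Q^*$ attains $\min_c\|Q_0-Q^*-c\1\|_\infty=\spn(Q_0-Q^*)/2$, as recorded in the Notation paragraph.

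The main obstacle is simply bookkeeping in (a): one must make sure every $\1$-multiple is routed into $\kappa$. All other steps follow from earlier lemmas.
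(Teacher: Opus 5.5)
Your proposal is correct and follows the paper's proof essentially step for step. The steps match: the substitution $Q_n=Q^*+\widetilde E_n+\kappa_n\1$ combined with shift-equivariance and $L_\delta=\widehat L_\delta+c(\cdot)\1$ in (a), the sup-norm contraction of $\widehat L_\delta$ in (b), and the chain $|M_{n+1}(x,a)|\le\spn(V_{Q_n})\le\spn(Q^*)+2\|\widetilde E_n\|_\infty$ in (c). The only difference is cosmetic: you remove the $\kappa_n$-shift at the level of $V$, whereas the paper bounds $\spn(V_{Q_n})\le\spn(Q_n)$ first and removes the shift afterwards.
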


\begin{proof}{Proof.}
(a) By~\eqref{eq:softQL-operator-form} with $f\equiv0$ and the soft Q-AROE $L_\delta Q^*=Q^*+\rho^*_\delta\1$,
\[
Q_{n+1}-Q^*=(1-\alpha_n)(Q_n-Q^*)+\alpha_n\bigl[L_\delta Q_n-L_\delta Q^*+\rho^*_\delta\1+M_{n+1}\bigr].
\]
Write $Q_n=Q^*+\widetilde E_n+\kappa_n\1$. Using $L_\delta(Q+s\1)=L_\delta Q+s\1$ and the decomposition $L_\delta=\widehat L_\delta+c(\cdot)\1$,
\[
L_\delta Q_n-L_\delta Q^*
=L_\delta(Q^*+\widetilde E_n)-L_\delta Q^*+\kappa_n\1
=G(\widetilde E_n)+\bigl[c(Q^*+\widetilde E_n)-c(Q^*)+\kappa_n\bigr]\1 .
\]
Substituting and collecting the multiples of $\1$,
\[
Q_{n+1}-Q^*
=\Bigl[(1-\alpha_n)\widetilde E_n+\alpha_n G(\widetilde E_n)+\alpha_n M_{n+1}\Bigr]
+\Bigl[\kappa_n+\alpha_n\bigl(\rho^*_\delta+c(Q^*+\widetilde E_n)-c(Q^*)\bigr)\Bigr]\1 ,
\]
and the second bracket is $\kappa_{n+1}$ by~\eqref{eq:kappa-rec}. Hence $\widetilde E_{n+1}=Q_{n+1}-Q^*-\kappa_{n+1}\1=(1-\alpha_n)\widetilde E_n+\alpha_n G(\widetilde E_n)+\alpha_n M_{n+1}$, which is~\eqref{eq:SA-form}.

(b) $G(0)=0$ is immediate, and $G(E)-G(E')=\widehat L_\delta(Q^*+E)-\widehat L_\delta(Q^*+E')$, so the claim is the $(1-\varepsilon)$-contraction of $\widehat L_\delta$ noted before the lemma.

(c) Measurability follows by induction from~\eqref{eq:kappa-rec}, since $Q_n$ is $\calF_n$-measurable. For fixed $(x,a)$, $M_{n+1}(x,a)$ is a function of $Q_n$ and of $Y_n^{x,a}$, and $Y_n^{x,a}\sim P(\cdot\mid x,a)$ is independent of $\calF_n$; hence $\E[M_{n+1}(x,a)\mid\calF_n]=\sum_yP(y\mid x,a)V_{Q_n}(y)-\sum_yP(y\mid x,a)V_{Q_n}(y)=0$. Next, both $V_{Q_n}(Y_n^{x,a})$ and $\sum_yP(y\mid x,a)V_{Q_n}(y)$ lie in the interval $[\min_yV_{Q_n}(y),\max_yV_{Q_n}(y)]$, so $|M_{n+1}(x,a)|\le\spn(V_{Q_n})$. Because the action set is common to all states, Lemma~\ref{lem:lse_lip} gives $|V_{Q_n}(y_1)-V_{Q_n}(y_2)|\le\max_b|Q_n(y_1,b)-Q_n(y_2,b)|\le\spn(Q_n)$ for all $y_1,y_2$, i.e.\ $\spn(V_{Q_n})\le\spn(Q_n)$. Finally, $\spn(Q_n)=\spn(Q^*+\widetilde E_n+\kappa_n\1)\le\spn(Q^*)+\spn(\widetilde E_n)\le\spn(Q^*)+2\|\widetilde E_n\|_\infty$. The bound on the conditional second moment follows from $(u+v)^2\le2u^2+2v^2$.

(d) $\spn(Q_n-Q^*)=\spn(\widetilde E_n+\kappa_n\1)=\spn(\widetilde E_n)\le2\|\widetilde E_n\|_\infty$. For $n=0$, $\|\widetilde E_0\|_\infty=\min_{c\in\R}\|Q_0-Q^*-c\1\|_\infty=\tfrac12\spn(Q_0-Q^*)$ by the choice of $\kappa_0$.\Halmos
\end{proof}

Recursion~\eqref{eq:SA-form} is a stochastic approximation for the fixed point $E=0$ of the sup-norm contraction $G$, driven by martingale-difference noise whose conditional second moment grows at most affinely in $\|\widetilde E_n\|_\infty^2$. This is exactly the setting of \citet{ChMaShSh20}, whose result we now recall in the form needed here.%

\begin{theorem}[\citet{ChMaShSh20}, Theorem~2.1 and Corollaries~2.1.2 and~2.1.3]
\label{thm:chen}
Let $d\ge2$, $\gamma\in(0,1)$, and let $G:\R^d\to\R^d$ satisfy $\|G(x)-G(y)\|_\infty\le\gamma\|x-y\|_\infty$ for all $x,y$, with fixed point $x^*$. Let $(x_n)$ satisfy $x_{n+1}=x_n+\alpha_n\bigl(G(x_n)-x_n+w_n\bigr)$, where $(x_n)$ is adapted to a filtration $(\calF_n)$, $x_0$ is $\calF_0$-measurable with $\E\|x_0-x^*\|_\infty^2<\infty$, $w_n$ is $\calF_{n+1}$-measurable, $\E[w_n\mid\calF_n]=0$ and $\E[\|w_n\|_\infty^2\mid\calF_n]\le A+B\|x_n\|_\infty^2$ for constants $A\ge0$, $B>0$. Let
\[
\alpha_n=\frac{a}{n+n_0},\qquad a:=\frac{4}{1-\gamma},\qquad n_0\ge\frac{256\,e\,(B+2)\log d}{(1-\gamma)^3}.
\]
Then for all $n\ge0$,
\begin{equation}\label{eq:chen-bound}
\E\bigl[\|x_n-x^*\|_\infty^2\bigr]
\le\sqrt{e}\;\E\|x_0-x^*\|_\infty^2\Bigl(\frac{n_0}{n+n_0}\Bigr)^2
+\frac{1024\,e^2\log d}{(1-\gamma)^3}\cdot\frac{A+2B\|x^*\|_\infty^2}{n+n_0}.
\end{equation}
\end{theorem}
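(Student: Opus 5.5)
Since this is a result quoted from \citet{ChMaShSh20}, the plan is to reproduce their Lyapunov drift argument for stochastic approximation with a contraction in a non-smooth norm, adapted to $\|\cdot\|_\infty$. The difficulty is that $\|\cdot\|_\infty^2$ is not smooth, so a direct second-order Taylor bound is unavailable. I would therefore replace it by a generalized Moreau envelope
\[
M(x):=\min_{u\in\R^d}\Bigl\{\tfrac12\|u\|_p^2+\tfrac{1}{2\mu}\|x-u\|_\infty^2\Bigr\},
\]
with $p=2\log d$ and a small parameter $\mu>0$ to be tuned against $1-\gamma$. Two standard facts do the work. First, $\tfrac12\|\cdot\|_p^2$ is $(p-1)$-smooth with respect to $\|\cdot\|_p$, and hence $M$ is $L$-smooth with respect to $\|\cdot\|_p$ with $L=(p-1)/\mu$. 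Second, the choice $p=2\log d$ gives $\|x\|_\infty\le\|x\|_p\le d^{1/p}\|x\|_\infty=\sqrt e\,\|x\|_\infty$, so $M$ is sandwiched between constant multiples of $\tfrac12\|\cdot\|_\infty^2$ depending only on $\mu$ and $\sqrt e$. This is where the $\log d$ factors and the $\sqrt e$ in \eqref{eq:chen-bound} come from.

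Next I would derive a one-step drift inequality for $M(x_n-x^*)$. Apply the smoothness inequality to $x_{n+1}-x^*=(x_n-x^*)+\alpha_n(G(x_n)-x_n+w_n)$ and take $\E[\cdot\mid\calF_n]$; the martingale term $\langle\nabla M(x_n-x^*),w_n\rangle$ then vanishes. For the linear drift term, use the contraction $\|G(x_n)-G(x^*)\|_\infty\le\gamma\|x_n-x^*\|_\infty$ together with convexity and the norm-comparison properties of $M$. With $\mu$ chosen so that the envelope constants are close to one (roughly $\mu\asymp(1-\gamma)^2$), this gives a negative drift of order $-\alpha_n(1-\gamma)M(x_n-x^*)$. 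The second-order term is bounded by
\[
\tfrac{L}{2}\alpha_n^2\,\E\bigl[\|G(x_n)-x_n+w_n\|_p^2\mid\calF_n\bigr].
\]
Into this I insert $\|G(x_n)-x_n\|_\infty\le 2\|x_n-x^*\|_\infty$, the noise hypothesis, and $\|x_n\|_\infty^2\le2\|x_n-x^*\|_\infty^2+2\|x^*\|_\infty^2$. The result is
\[
\E[M_{n+1}]\le\bigl(1-c_1\alpha_n(1-\gamma)+c_2\alpha_n^2(B+2)\tfrac{\log d}{(1-\gamma)^2}\bigr)\E[M_n]+c_3\alpha_n^2\tfrac{\log d}{(1-\gamma)^2}\bigl(A+2B\|x^*\|_\infty^2\bigr).
\]

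Finally I would solve this scalar recursion. The lower bound on $n_0$ makes $\alpha_n\le\alpha_0$ small enough that the $\alpha_n^2(B+2)$ term eats at most half of the negative drift. The contraction factor is then at most $1-2/(n+n_0)$ once $a=4/(1-\gamma)$, since the effective drift coefficient becomes $\tfrac12(1-\gamma)a=2$. Unrolling, $\prod_{j<n}(1-2/(j+n_0))\le(n_0/(n+n_0))^2$ yields the first term of \eqref{eq:chen-bound}. The accumulated noise is
\[
\sum_{k<n}\alpha_k^2\prod_{k<j<n}\bigl(1-2/(j+n_0)\bigr)\lesssim a^2\sum_k\frac{(k+n_0)^2}{(k+n_0)^2(n+n_0)^2}\lesssim\frac{a^2}{n+n_0},
\]
which gives the $(1-\gamma)^{-3}$ scaling of the second term. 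Converting back from $M$ to $\|\cdot\|_\infty^2$ costs the factors of $e$.

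The main obstacle will be the drift step: proving the precise inequality $\langle\nabla M(x-x^*),G(x)-x\rangle\le-c(1-\gamma)M(x-x^*)$ for the envelope, and tracking $\mu$ and $p$ so that the constants $256e$ and $1024e^2$ come out. I would first follow the proof in \citet{ChMaShSh20} for the gradient inner product bound, via the envelope's characterization $\nabla M(x)=\nabla(\tfrac12\|\cdot\|_p^2)(u^*)$, rather than rederiving it.
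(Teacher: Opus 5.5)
Your machinery is the right one: it is the generalized Moreau-envelope argument of \citet{ChMaShSh20}. The paper does not redo that argument; it instantiates it directly (Corollary~2.1.3 with $p=4\log d$ and $\mu=(\tfrac12+\tfrac1{2\gamma})^2-1$, then case~3 of Corollary~2.1.2). However, two of your concrete choices break the derivation.

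\emph{The envelope has its two norms in the wrong slots.} By the substitution $v=x-u$, your $\min_u\{\tfrac12\|u\|_p^2+\tfrac1{2\mu}\|x-u\|_\infty^2\}$ equals $\mu^{-1}$ times the correct envelope with parameter $1/\mu$. It is $(p-1)$-smooth rather than $(p-1)/\mu$-smooth. As $\mu\downarrow0$ it tends to $\tfrac12\|x\|_p^2$, whose ratio to $\tfrac12\|x\|_\infty^2$ reaches $d^{2/p}$. The comparison argument then only gives the drift margin $1-\gamma d^{1/p}$, which is negative for $\gamma>d^{-1/p}$ ($=e^{-1/2}$ with your $p$). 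You need $M(x)=\min_u\{\tfrac12\|u\|_\infty^2+\tfrac1{2\mu}\|x-u\|_p^2\}$. This is $(p-1)/\mu$-smooth with respect to $\|\cdot\|_p$ and satisfies $(1+\mu d^{-2/p})M(x)\le\tfrac12\|x\|_\infty^2\le(1+\mu)M(x)$.

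\emph{The scale $\mu\asymp(1-\gamma)^2$ is wrong and costs a factor $(1-\gamma)^{-1}$; this is the more serious problem.} It gives $L=(p-1)/\mu\asymp\log d/(1-\gamma)^2$, which is exactly the coefficient in your displayed recursion. With $a=4/(1-\gamma)$, that recursion needs $n_0\gtrsim(B+2)\log d/(1-\gamma)^4$ for the $\alpha_n^2$ term to absorb at most half of the drift. It also accumulates noise of order $a^2\log d/((1-\gamma)^2(n+n_0))\asymp\log d/((1-\gamma)^4(n+n_0))$. So neither the stated threshold on $n_0$ nor the $(1-\gamma)^{-3}$ in \eqref{eq:chen-bound} follows; the $(1-\gamma)^{-3}$ you announce at the end is a miscount.

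What you are missing is that the comparison ratio $\alpha_1:=(1+\mu)/(1+\mu d^{-2/p})\le1+\mu$ enters the drift only through $\gamma\sqrt{\alpha_1}$. Hence $\mu\asymp1-\gamma$ suffices. The choice $\mu=(\tfrac12+\tfrac1{2\gamma})^2-1=(1-\gamma)(1+3\gamma)/(4\gamma^2)\ge1-\gamma$ gives $\gamma\sqrt{1+\mu}=(1+\gamma)/2$, and therefore the margin $\alpha_2\ge(1-\gamma)/2$.

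Also take $p=4\log d$ instead of $2\log d$. Then $L=(p-1)/\mu\le4\log d/(1-\gamma)$. Moreover $d^{-2/p}=e^{-1/2}$, so $\alpha_1<\sqrt e$ for every $\mu$; with $p=2\log d$ the uniform bound is only $\alpha_1<e$. Finally, $p\ge2$ then also holds for $d=2$, where $2\log2<2$ and $\tfrac12\|\cdot\|_p^2$ would not be smooth.

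With these choices the constants of \citet{ChMaShSh20} are $\alpha_3\le32e(B+2)\log d/(1-\gamma)$ and $\alpha_4\le16e\log d/(1-\gamma)$. The step-size threshold is $a\alpha_3/\alpha_2\le256e(B+2)\log d/(1-\gamma)^3\le n_0$, and the noise coefficient is $4ea^2\alpha_4/(\alpha_2a-1)\le1024e^2\log d/(1-\gamma)^3$. This is precisely the paper's derivation.
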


\begin{proof}{Derivation from \citet{ChMaShSh20}.}
In the notation of that paper, with $\|\cdot\|_c=\|\cdot\|_e=\|\cdot\|_\infty$, the smoothing norm $\|\cdot\|_p$ with $p=4\log d$ and $\mu=(1/2+1/(2\gamma))^2-1$, Corollary~2.1.3 there gives $\alpha_2\ge(1-\gamma)/2$, $\alpha_3\le32e(B+2)\log d/(1-\gamma)$ and $\alpha_4\le16e\log d/(1-\gamma)$. For $\alpha_1=(1+\mu/\ell_{cs}^2)/(1+\mu/u_{cs}^2)$ we use the norm-equivalence constants $\ell_{cs}=1$ and $u_{cs}=d^{1/p}=e^{1/4}$ of the pair $(\|\cdot\|_\infty,\|\cdot\|_p)$, which give $\alpha_1=(1+\mu)/(1+\mu e^{-1/2})<\sqrt e$ for every $\mu>0$. Our $a=4/(1-\gamma)$ satisfies $a\ge2/\alpha_2>1/\alpha_2$, so case~3 of Corollary~2.1.2 applies with $K=\max(1,a\alpha_3/\alpha_2)\le256e(B+2)\log d/(1-\gamma)^3\le n_0$, and yields
\[
\E\|x_n-x^*\|_\infty^2\le\alpha_1\,\E\|x_0-x^*\|_\infty^2\Bigl(\frac{n_0}{n+n_0}\Bigr)^{\alpha_2a}+\frac{4e\,a^2\alpha_4}{\alpha_2a-1}\cdot\frac{A+2B\|x^*\|_\infty^2}{n+n_0}.
\]
Since $\alpha_2a\ge2$, the first term is at most $\sqrt e\,\E\|x_0-x^*\|^2_\infty(n_0/(n+n_0))^2$, and since $\alpha_2a-1\ge1$, $a^2=16/(1-\gamma)^2$ and $\alpha_4\le16e\log d/(1-\gamma)$, the coefficient of the second term is at most $1024e^2\log d/(1-\gamma)^3$. \Halmos

We note that \citet[Corollary~2.1.2]{ChMaShSh20} is stated with $K=\max(1,\epsilon\alpha_3/\alpha_2)$ in their notation; as they point out, the only role of $K$ is to guarantee that the initial step size satisfies $\epsilon_0\le\alpha_2/\alpha_3$, and their proof applies unchanged to any larger $K$. Moreover, their filtration $\{x_0,w_0,\dots,x_k\}$ may be replaced by any filtration to which $(x_k)$ is adapted and for which the two noise conditions hold.
\end{proof}

We also need a uniform bound on the span of $Q^*_\delta$, which is the single-agent version of Lemma~\ref{lem:span-Qm}.

\begin{lemma}[Span of the soft Q-function]
\label{lem:span-Qstar}
Under Assumption~\ref{def:minor}, $\spn(Q^*_\delta)\le\spn(r)/\varepsilon$, where $\spn(r):=\max_{(x,a)}r(x,a)-\min_{(x,a)}r(x,a)$.
\end{lemma}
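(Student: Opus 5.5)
The argument is the single-agent copy of the proof of Lemma~\ref{lem:span-Qm}, with $r(\cdot,\cdot,m)$ replaced by $r$ and no appeal to Lemma~\ref{lem:span_r}. Let $(\rho^*_\delta,Q^*_\delta)$ solve the soft Q-AROE~\eqref{eq:soft_Q_AROE}, whose existence is given by Corollary~\ref{cor:Lhat_soft_banach}. Write $V^*(y):=\lse_\delta(Q^*_\delta(y,\cdot))$, so that $\rho^*_\delta+Q^*_\delta(x,a)=r(x,a)+\sum_yP(y\mid x,a)V^*(y)$ for every $(x,a)$.

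Fix two pairs $(x_1,a_1)$ and $(x_2,a_2)$ and subtract the two corresponding equations; the constant $\rho^*_\delta$ cancels. Now decompose each kernel as $P(\cdot\mid x_i,a_i)=\widehat P(\cdot\mid x_i,a_i)+\nu$ using Assumption~\ref{def:minor}. The two $\nu$-terms are the same and cancel. What is left is $r(x_1,a_1)-r(x_2,a_2)$ plus the difference of two averages of $V^*$ against nonnegative measures, each of total mass $1-\varepsilon$. That difference is at most $(1-\varepsilon)\spn(V^*)$, and the reward difference is at most $\spn(r)$.

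Next I bound $\spn(V^*)$ by $\spn(Q^*_\delta)$. Because the action set $A$ is the same at every state, Lemma~\ref{lem:lse_lip} applies to the vectors $Q^*_\delta(y_1,\cdot)$ and $Q^*_\delta(y_2,\cdot)$. It gives $|V^*(y_1)-V^*(y_2)|\le\max_b|Q^*_\delta(y_1,b)-Q^*_\delta(y_2,b)|\le\spn(Q^*_\delta)$.

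Finally, take the maximum over $(x_1,a_1)$ and the minimum over $(x_2,a_2)$. This yields $\spn(Q^*_\delta)\le\spn(r)+(1-\varepsilon)\spn(Q^*_\delta)$, and since $\spn(Q^*_\delta)$ is finite, rearranging gives the claim. None of the steps is a real obstacle. The one point to state explicitly is that the bound does not depend on the additive constant in $Q^*_\delta$, because the span is shift-invariant.
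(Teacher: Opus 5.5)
Your proposal is correct and follows the paper's own route. The paper proves this lemma by referring to the proof of Lemma~\ref{lem:span-Qm}, which uses the same steps as yours: subtracting two soft Q-AROE equations, cancelling the $\nu$-parts, bounding the residual-kernel terms by $(1-\varepsilon)\spn(V^*)$, using $\spn(V^*)\le\spn(Q^*_\delta)$ from Lemma~\ref{lem:lse_lip}, and rearranging.
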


\begin{proof}{Proof.}
The proof is similar to the proof of Lemma~\ref{lem:span-Qm}.\Halmos
\end{proof}

\begin{theorem}[Finite-sample bound for synchronous soft Q-learning]
\label{thm:softQL-rate}
Let Assumption~\ref{def:minor} hold, let $d=|S||A|\ge2$, and let $(Q_n)$ be generated by Algorithm~\ref{alg:softQL} with an arbitrary offset $f$, an initial condition $Q_0$ (deterministic, or random with finite second moment and independent of the samples), and step sizes
\begin{equation}\label{eq:stepsizes}
\alpha_n=\frac{a}{n+n_0},\qquad a:=\frac4\varepsilon,\qquad n_0\ge\frac{2560\,e\log d}{\varepsilon^3}.
\end{equation}
Then for every $n\ge0$,
\begin{equation}\label{eq:softQL-bound}
\E\bigl[\spn(Q_n-Q^*_\delta)^2\bigr]
\le\sqrt e\;\E\bigl[\spn(Q_0-Q^*_\delta)^2\bigr]\Bigl(\frac{n_0}{n+n_0}\Bigr)^2
+\frac{8192\,e^2\log d}{\varepsilon^3}\cdot\frac{\spn(Q^*_\delta)^2}{n+n_0},
\end{equation}
where $\spn(Q^*_\delta)\le\spn(r)/\varepsilon$. Consequently, the policy $\pi_n=\sigma_\delta(Q_n)$ satisfies, for every $x\in S$,
\begin{equation}\label{eq:softQL-policy-bound}
\E\bigl[\|\pi_n(\cdot\mid x)-\pi^*_\delta(\cdot\mid x)\|_1^2\bigr]\le\frac{1}{4\delta^2}\,\E\bigl[\spn(Q_n-Q^*_\delta)^2\bigr],
\end{equation}
where $\pi^*_\delta=\sigma_\delta(Q^*_\delta)$ is the optimal entropy-regularized policy.
\end{theorem}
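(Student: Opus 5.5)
By Lemma~\ref{lem:offset-invariance}, both $\spn(Q_n-Q^*_\delta)$ and $\pi_n=\sigma_\delta(Q_n)$ are unchanged if the offset $f$ is replaced by $f\equiv0$. We therefore analyze the un-normalized recursion and use the splitting of Lemma~\ref{lem:reduction}. Parts (a)--(c) of that lemma say that $\widetilde E_n$ follows exactly the recursion of Theorem~\ref{thm:chen} with the following data: $x_n=\widetilde E_n$, $x^*=0$, $G$ a $\gamma$-contraction in $\|\cdot\|_\infty$ with $\gamma=1-\varepsilon$, noise $w_n=M_{n+1}$, and the filtration $(\calF_n)$. Adaptedness and $\E[M_{n+1}\mid\calF_n]=0$ come from part (c). The second-moment condition also holds by part (c), with $A=2\,\spn(Q^*_\delta)^2$ and $B=8$.

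Next we check the hypotheses of Theorem~\ref{thm:chen}. Since $1-\gamma=\varepsilon$, the step size there is $a=4/\varepsilon$, which matches~\eqref{eq:stepsizes}. The requirement on $n_0$ becomes $n_0\ge 256e(8+2)\log d/\varepsilon^3=2560e\log d/\varepsilon^3$, which is the stated threshold. For the initial condition, $\widetilde E_0=Q_0-Q^*_\delta-\kappa_0\1$ is $\calF_0$-measurable, because $Q_0$ is $\calF_0$-measurable and $\kappa_0$ is a function of $Q_0$. Part (d) gives $\|\widetilde E_0\|_\infty=\tfrac12\spn(Q_0-Q^*_\delta)$, so $\E\|\widetilde E_0\|_\infty^2=\tfrac14\E[\spn(Q_0-Q^*_\delta)^2]<\infty$ under the finite-second-moment hypothesis. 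We must also confirm that the samples at step $n$ are independent of $\calF_n$; this holds by the independence assumptions, including independence of $Q_0$ from the samples.

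Applying~\eqref{eq:chen-bound} with $x^*=0$, so that the term $2B\|x^*\|^2_\infty$ vanishes, gives
\[
\E\|\widetilde E_n\|_\infty^2\le\sqrt e\,\tfrac14\E[\spn(Q_0-Q^*_\delta)^2]\Bigl(\frac{n_0}{n+n_0}\Bigr)^2+\frac{1024e^2\log d}{\varepsilon^3}\cdot\frac{2\spn(Q^*_\delta)^2}{n+n_0}.
\]
Part (d) gives $\spn(Q_n-Q^*_\delta)^2\le4\|\widetilde E_n\|_\infty^2$. Multiplying the display by $4$ yields~\eqref{eq:softQL-bound}: the constant $\tfrac14$ cancels in the first term, and $4\cdot2\cdot1024=8192$ in the second. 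The bound $\spn(Q^*_\delta)\le\spn(r)/\varepsilon$ is Lemma~\ref{lem:span-Qstar}.

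For the policy bound~\eqref{eq:softQL-policy-bound}, fix $x$ and apply the shift-invariant estimate~\eqref{eq:softmax_span_lip} of Lemma~\ref{lem:softmax_linf_l1} with $z=Q_n(x,\cdot)$ and $z'=Q^*_\delta(x,\cdot)$. Using $\spn(Q_n(x,\cdot)-Q^*_\delta(x,\cdot))\le\spn(Q_n-Q^*_\delta)$, we get
\[
\|\pi_n(\cdot\mid x)-\pi^*_\delta(\cdot\mid x)\|_1\le\frac{\spn(Q_n-Q^*_\delta)}{2\delta}.
\]
Squaring and taking expectations gives~\eqref{eq:softQL-policy-bound}. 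The main obstacle is not computational, because the reduction lemma does the real work. It is the careful verification that Theorem~\ref{thm:chen} applies exactly as stated: that $B=8$ produces precisely the stated $n_0$ threshold, and that the filtration and measurability conditions hold for $\widetilde E_n$, including when $Q_0$ is random.
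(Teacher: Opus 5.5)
Your proposal is correct and follows the paper's proof essentially step for step: reduction to $f\equiv0$ via Lemma~\ref{lem:offset-invariance}, application of Theorem~\ref{thm:chen} to the sequence $\widetilde E_n$ of Lemma~\ref{lem:reduction} with $\gamma=1-\varepsilon$, $x^*=0$, $A=2\,\spn(Q^*_\delta)^2$, $B=8$, multiplication by $4$ using part~(d), and the shift-invariant softmax bound~\eqref{eq:softmax_span_lip} for the policy. Your additional checks on the measurability and square-integrability of $\widetilde E_0$ when $Q_0$ is random spell out what the paper leaves implicit.
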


\begin{proof}{Proof.}
By Lemma~\ref{lem:offset-invariance}, both sides of~\eqref{eq:softQL-bound} and~\eqref{eq:softQL-policy-bound} are unchanged if $f$ is replaced by $f\equiv0$, so we assume $f\equiv0$. By Lemma~\ref{lem:reduction}, the sequence $(\widetilde E_n)$ satisfies the recursion of Theorem~\ref{thm:chen} with $\gamma=1-\varepsilon$, $x^*=0$, $w_n=M_{n+1}$, $A=2\spn(Q^*_\delta)^2$ and $B=8$; the step sizes~\eqref{eq:stepsizes} are those of Theorem~\ref{thm:chen}, since $4/(1-\gamma)=4/\varepsilon$ and $256e(B+2)\log d/(1-\gamma)^3=2560e\log d/\varepsilon^3$. Hence, by~\eqref{eq:chen-bound},
\[
\E\|\widetilde E_n\|_\infty^2\le\sqrt e\;\E\|\widetilde E_0\|_\infty^2\Bigl(\frac{n_0}{n+n_0}\Bigr)^2+\frac{1024\,e^2\log d}{\varepsilon^3}\cdot\frac{2\,\spn(Q^*_\delta)^2}{n+n_0}.
\]
By Lemma~\ref{lem:reduction}(d), $\spn(Q_n-Q^*_\delta)^2\le4\|\widetilde E_n\|_\infty^2$ and $\|\widetilde E_0\|_\infty^2=\frac14\spn(Q_0-Q^*_\delta)^2$; multiplying the last display by $4$ gives~\eqref{eq:softQL-bound}. The bound on $\spn(Q^*_\delta)$ is Lemma~\ref{lem:span-Qstar}. Finally, by the shift-invariant softmax bound~\eqref{eq:softmax_span_lip},
\begin{equation*}
\|\pi_n(\cdot\mid x)-\pi^*_\delta(\cdot\mid x)\|_1=\|\sigma_\delta(Q_n(x,\cdot))-\sigma_\delta(Q^*_\delta(x,\cdot))\|_1
\le\frac{\spn\bigl(Q_n(x,\cdot)-Q^*_\delta(x,\cdot)\bigr)}{2\delta}\le\frac{\spn(Q_n-Q^*_\delta)}{2\delta},
\end{equation*}
and squaring and taking expectations gives~\eqref{eq:softQL-policy-bound}.\Halmos
\end{proof}

\begin{remark}[Discussion of the bound]
\label{rem:softQL-discussion}
(i) The mean-square error of $Q_n$ in the span seminorm decays as $O(1/n)$; the constants depend on the size of the problem only through $\log(|S||A|)$, and on the minorization mass through $\varepsilon^{-3}$, in complete analogy with the dependence on $(1-\beta)^{-3}$ in the discounted case \citep{ChMaShSh20}. The temperature $\delta$ does not appear in~\eqref{eq:softQL-bound} at all: as in Lemma~\ref{lem:lipschitz-Q}, this is because the log-sum-exp is $1$-Lipschitz for every $\delta$. The temperature enters only through the softmax in~\eqref{eq:softQL-policy-bound}, at the rate $1/\delta^2$. %

(ii) Finite-sample bounds for hard-max RVI Q-learning in the span seminorm were obtained by \citet{ZhZhMa21} under a span-contraction assumption on a multi-step Bellman operator, via a Lyapunov argument in the quotient space $\R^{S\times A}/\R\1$; the general theory of \citet{ChZhZhHaMa25} for seminorm-contractive stochastic approximation applies to $L_\delta$ as well, since $L_\delta$ is a span-seminorm contraction by Theorem~\ref{thm:Lhat_delta_span}. The route taken here is more elementary and makes the role of the minorization condition transparent: the measure $\nu$, which the learner does not know, is used only to define the scalar sequence $(\kappa_n)$ in Lemma~\ref{lem:reduction}, after which the analysis is that of an ordinary sup-norm contractive stochastic approximation.
\end{remark}

\subsection{Learning the mean-field equilibrium}
\label{sec:mfg-learning}

We now return to the mean-field game of Section~\ref{sec:mfg}, under Assumptions~\ref{ass:lip_primitives} and~\ref{ass:minor_mfg}, and assume that $K_1<\varepsilon$ and that $\delta$ satisfies~\eqref{eq:delta-large}, so that $\calF$ is a $C_{\calF}$-contraction with $C_{\calF}<1$ by Theorem~\ref{thm:F-contraction}; $m^*$ denotes the unique equilibrium population distribution and $\pi^*:=\pi^*_{m^*}$ the canonical equilibrium policy of Corollary~\ref{cor:MFNE}. The learner has access to the simulator described at the beginning of the section. The model-free version of the iteration of Section~\ref{sec:algorithm} is Algorithm~\ref{alg:mfg-learning}.

\begin{algorithm}[t]
\caption{Model-free mean-field iteration with soft Q-learning and empirical kernels}
\label{alg:mfg-learning}
\begin{algorithmic}[1]
\Require $m_0\in\Delta(S)$; temperature $\delta$; inner iteration numbers $(n_k)_{k\ge0}$ and $(N_k)_{k\ge0}$; step sizes $(\alpha_n)$ as in~\eqref{eq:stepsizes}
\For{$k=0,1,2,\dots$}
  \State \textbf{(a) soft Q-learning at population $m_k$:} run Algorithm~\ref{alg:softQL} for $n_k$ iterations, querying the simulator at $(x,a,m_k)$, from $Q_0^{(k)}:=0$; set $\widehat Q_k:=Q_{n_k}$ and $\widehat\pi_k:=\sigma_\delta(\widehat Q_k)$
  \State \textbf{(b) empirical kernel:} for each $(x,a)$ draw $N_k$ fresh samples $Y^{x,a}_{k,i}\sim P(\cdot| x,a,m_k)$, $i=1,\dots,N_k$, and set
  $\widehat P_k(y| x,a):=\frac1{N_k}\sum_{i=1}^{N_k}\1_{\{Y^{x,a}_{k,i}=y\}}$
  \State \textbf{(c) population update:} $m_{k+1}(y):=\sum_{x\in S}\sum_{a\in A}\widehat P_k(y| x,a)\,\widehat\pi_k(a| x)\,m_k(x)$, $y\in S$
\EndFor
\Ensure $(\widehat\pi_k,m_k)$
\end{algorithmic}
\end{algorithm}

Since every row $\widehat P_k(\cdot\mid x,a)$ is a probability vector, $m_{k+1}\in\Delta(S)$ automatically. Each outer iteration uses $|S||A|(n_k+N_k)$ simulator calls. Step~(c) is the exact population update of Definition~\ref{def:mfe_op} with $(P,\pi^*_{m_k})$ replaced by $(\widehat P_k,\widehat\pi_k)$, and the analysis consists in controlling the two resulting errors and propagating them through the contraction $\calF$. We first record the accuracy of empirical distributions.

\begin{lemma}[Empirical distributions]
\label{lem:empirical}
Let $p\in\Delta(S)$ and let $\widehat p_N$ be the empirical distribution of $N$ i.i.d.\ samples from $p$. Then $\E\|\widehat p_N-p\|_1\le\sqrt{|S|/N}$ and $\E\|\widehat p_N-p\|_1^2\le|S|/N$.
\end{lemma}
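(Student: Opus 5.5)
The plan is to prove the second-moment bound first and then obtain the first-moment bound from Jensen's inequality. Write $\widehat p_N(y)=\frac1N\sum_{i=1}^N\1_{\{Y_i=y\}}$ with $Y_1,\dots,Y_N$ i.i.d.\ from $p$. For each fixed $y\in S$, the count $N\widehat p_N(y)$ is a sum of $N$ independent Bernoulli$(p(y))$ variables. Hence
\[
\E\widehat p_N(y)=p(y),\qquad \E\bigl(\widehat p_N(y)-p(y)\bigr)^2=\frac{p(y)(1-p(y))}{N}\le\frac{p(y)}{N}.
\]

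The next step is to pass from coordinates to the $\ell^1$-norm via Cauchy--Schwarz over the $|S|$ coordinates:
\[
\|\widehat p_N-p\|_1^2=\Bigl(\sum_{y\in S}|\widehat p_N(y)-p(y)|\Bigr)^2\le|S|\sum_{y\in S}\bigl(\widehat p_N(y)-p(y)\bigr)^2.
\]
Taking expectations and using the variance bound gives
\[
\E\|\widehat p_N-p\|_1^2\le\frac{|S|}{N}\sum_{y}p(y)(1-p(y))\le\frac{|S|}{N}.
\]
(In fact $\sum_y p(y)(1-p(y))\le 1-1/|S|$, but this refinement is not needed.) Finally, Jensen's inequality $\E Z\le\sqrt{\E Z^2}$ applied to $Z=\|\widehat p_N-p\|_1$ yields $\E\|\widehat p_N-p\|_1\le\sqrt{|S|/N}$.

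There is no real obstacle. The only choice is to bound the squared $\ell^1$-norm by Cauchy--Schwarz, rather than bounding each $\E|\widehat p_N(y)-p(y)|$ separately. The separate-coordinate route would give the sharper $\sum_y\sqrt{p(y)/N}\le\sqrt{|S|/N}$ for the first moment, but it does not directly give the stated second-moment bound.
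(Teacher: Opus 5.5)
Your proof is correct, and your second-moment argument is the paper's verbatim: Cauchy--Schwarz over the $|S|$ coordinates combined with the binomial variance $p(y)(1-p(y))/N$. The only difference is that you deduce the first-moment bound from the second by Jensen, whereas the paper bounds each $\E|\widehat p_N(y)-p(y)|\le\sqrt{p(y)/N}$ separately and then applies Cauchy--Schwarz to $\sum_y\sqrt{p(y)}$; both routes give $\sqrt{|S|/N}$.
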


\begin{proof}{Proof.}
For each $y\in S$, $N\widehat p_N(y)$ is binomial with parameters $(N,p(y))$, so by the Cauchy--Schwarz inequality $\E|\widehat p_N(y)-p(y)|\le\sqrt{\operatorname{Var}(\widehat p_N(y))}=\sqrt{p(y)(1-p(y))/N}\le\sqrt{p(y)/N}$. Summing over $y$ and using $\sum_y\sqrt{p(y)}\le\sqrt{|S|\sum_yp(y)}=\sqrt{|S|}$ (Cauchy--Schwarz again) gives the first claim. For the second, $\|\widehat p_N-p\|_1^2\le|S|\sum_y(\widehat p_N(y)-p(y))^2$ by the Cauchy--Schwarz inequality, and $\E\sum_y(\widehat p_N(y)-p(y))^2=\sum_yp(y)(1-p(y))/N\le1/N$.\Halmos
\end{proof}

\begin{lemma}[Error of the approximate mean-field operator]
\label{lem:approx-F}
Let Assumptions~\ref{ass:lip_primitives} and~\ref{ass:minor_mfg} hold, so that $Q^*_m$, $\pi^*_m$ and $\calF$ are well defined. Let $m\in\Delta(S)$, $\widehat Q\in\R^{S\times A}$, $\widehat\pi:=\sigma_\delta(\widehat Q)$, and let $\widehat P(\cdot\mid x,a)\in\Delta(S)$ for every $(x,a)$. Define $\widehat\calF(m)(y):=\sum_{x,a}\widehat P(y\mid x,a)\,\widehat\pi(a\mid x)\,m(x)$. Then
\[
\bigl\|\widehat\calF(m)-\calF(m)\bigr\|_1
\le\sum_{x\in S}\sum_{a\in A}m(x)\,\widehat\pi(a\mid x)\,\bigl\|\widehat P(\cdot\mid x,a)-P(\cdot\mid x,a,m)\bigr\|_1
+\frac{K_1}{2\delta}\,\spn\bigl(\widehat Q-Q^*_m\bigr).
\]
\end{lemma}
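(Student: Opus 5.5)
The plan is a two-term triangle inequality through an intermediate population update that uses the learned policy $\widehat\pi$ with the exact kernel $P(\cdot\mid\cdot,\cdot,m)$. Set
\[
\widetilde{\calF}(m)(y):=\sum_{x,a}P(y\mid x,a,m)\,\widehat\pi(a\mid x)\,m(x),
\]
and bound
\[
\|\widehat\calF(m)-\calF(m)\|_1\le\|\widehat\calF(m)-\widetilde\calF(m)\|_1+\|\widetilde\calF(m)-\calF(m)\|_1 .
\]

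For the first term, only the kernel differs. Write
\[
\widehat\calF(m)-\widetilde\calF(m)=\sum_{x,a}m(x)\,\widehat\pi(a\mid x)\bigl(\widehat P(\cdot\mid x,a)-P(\cdot\mid x,a,m)\bigr).
\]
The triangle inequality in $\ell^1$, together with nonnegativity of $m(x)\widehat\pi(a\mid x)$, gives exactly the first sum in the statement.

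For the second term, only the policy differs. The difference is $\sum_x m(x)\bigl(P_m^{\widehat\pi}(\cdot\mid x)-P_m^{\pi_m^*}(\cdot\mid x)\bigr)$. Proposition~\ref{prop:policy-avg-lip} with $\hat m=m$ bounds each $\ell^1$ norm by $K_1\|\widehat\pi(\cdot\mid x)-\pi^*_m(\cdot\mid x)\|_1$. Since $\widehat\pi(\cdot\mid x)=\sigma_\delta(\widehat Q(x,\cdot))$ and $\pi^*_m(\cdot\mid x)=\sigma_\delta(Q^*_m(x,\cdot))$, the shift-invariant bound~\eqref{eq:softmax_span_lip} of Lemma~\ref{lem:softmax_linf_l1} gives
\[
\|\widehat\pi(\cdot\mid x)-\pi^*_m(\cdot\mid x)\|_1\le\frac{\spn\bigl(\widehat Q(x,\cdot)-Q^*_m(x,\cdot)\bigr)}{2\delta}\le\frac{\spn(\widehat Q-Q^*_m)}{2\delta},
\]
where the last step holds because the span of a restriction is at most the span of the whole function. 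Summing against $m(x)$, which totals one, yields $\frac{K_1}{2\delta}\spn(\widehat Q-Q^*_m)$.

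Adding the two bounds gives the claim. No step is a real obstacle. The only point needing care is applying the shift-invariant form of the softmax bound rather than the sup-norm form, so that the constant is $1/(2\delta)$ and the bound does not depend on which additive normalization of $Q^*_m$ is used.
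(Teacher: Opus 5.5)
Your proposal is correct and follows the paper's proof: the paper uses the same split through the intermediate update with the exact kernel and the learned policy, and bounds the kernel part by the triangle inequality. For the policy part it argues exactly as in the policy step of Proposition~\ref{prop:policy-avg-lip} and then applies the shift-invariant softmax bound~\eqref{eq:softmax_span_lip}, which your direct appeal to that proposition with $\hat m=m$ reproduces.
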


\begin{proof}{Proof.}
By the definitions of $\widehat\calF$ and $\calF$,
\begin{multline*}
\widehat\calF(m)(y)-\calF(m)(y)
=\sum_{x,a}\bigl(\widehat P(y\mid x,a)-P(y\mid x,a,m)\bigr)\widehat\pi(a\mid x)m(x)\\
\quad+\sum_x m(x)\sum_a\bigl(\widehat\pi(a\mid x)-\pi^*_m(a\mid x)\bigr)P(y\mid x,a,m).
\end{multline*}
The $\ell^1$-norm of the first sum is at most $\sum_{x,a}m(x)\widehat\pi(a\mid x)\|\widehat P(\cdot\mid x,a)-P(\cdot\mid x,a,m)\|_1$ by the triangle inequality. For the second sum, fix $x$; the weights $\varphi(a):=\widehat\pi(a\mid x)-\pi^*_m(a\mid x)$ sum to zero, so, exactly as in the policy part of the proof of Proposition~\ref{prop:policy-avg-lip}, Lemma~\ref{lem:span-tv} and the action part of~\eqref{eq:lip_kernel} give
\begin{align*}
\Bigl\|\sum_a\varphi(a)P(\cdot\mid x,a,m)\Bigr\|_1 \le K_1\,\|\widehat\pi(\cdot\mid x)-\pi^*_m(\cdot\mid x)\|_1
\le\frac{K_1}{2\delta}\,\spn\bigl(\widehat Q(x,\cdot)-Q^*_m(x,\cdot)\bigr)\le\frac{K_1}{2\delta}\,\spn\bigl(\widehat Q-Q^*_m\bigr),
\end{align*}
where the second inequality is the shift-invariant softmax bound~\eqref{eq:softmax_span_lip} applied to $\widehat\pi(\cdot\mid x)=\sigma_\delta(\widehat Q(x,\cdot))$ and $\pi^*_m(\cdot\mid x)=\sigma_\delta(Q^*_m(x,\cdot))$. Multiplying by $m(x)$, summing over $x$ and combining the two bounds proves the claim.\Halmos
\end{proof}

\begin{theorem}[Convergence of the model-free mean-field iteration]
\label{thm:mfg-learning}
Let Assumptions~\ref{ass:lip_primitives} and~\ref{ass:minor_mfg} hold, let $K_1<\varepsilon$ and let $\delta$ satisfy~\eqref{eq:delta-large}, so that $C_{\calF}<1$. Let $d=|S||A|\ge2$ and let the inner step sizes be as in~\eqref{eq:stepsizes}. Define, for $n\ge0$ and $k\ge0$,
\begin{equation}\label{eq:beta-eta}
\beta(n):=\Bigl(\frac{2L_1}{\varepsilon}\Bigr)^2\left[\sqrt e\Bigl(\frac{n_0}{n+n_0}\Bigr)^2+\frac{8192\,e^2\log d}{\varepsilon^3\,(n+n_0)}\right],
\qquad
\bar\eta_k:=\frac{K_1}{2\delta}\sqrt{\beta(n_k)}+\sqrt{\frac{|S|}{N_k}} .
\end{equation}
Then the iterates of Algorithm~\ref{alg:mfg-learning} satisfy, for every $k\ge0$,
\begin{equation}\label{eq:mfg-learning-bound}
\E\|m_k-m^*\|_1\le C_{\calF}^{\,k}\,\|m_0-m^*\|_1+\sum_{j=0}^{k-1}C_{\calF}^{\,k-1-j}\,\bar\eta_j ,
\end{equation}
and the learned policies satisfy, for every $k\ge0$ and $x\in S$,
\begin{equation}\label{eq:mfg-learning-policy}
\E\|\widehat\pi_k(\cdot\mid x)-\pi^*(\cdot\mid x)\|_1\le\frac{1}{2\delta}\sqrt{\beta(n_k)}+\frac{C_Q}{2\delta}\,\E\|m_k-m^*\|_1 .
\end{equation}
\end{theorem}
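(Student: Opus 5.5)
The plan is to run a one-step inexact fixed-point recursion and then unroll it. Let $\calG_k$ denote the $\sigma$-field generated by everything sampled before outer iteration $k$, so that $m_k$ is $\calG_k$-measurable. Write $m_{k+1}=\widehat\calF_k(m_k)$, where $\widehat\calF_k$ is the approximate operator of Lemma~\ref{lem:approx-F} built from $(\widehat P_k,\widehat Q_k)$. The triangle inequality, the equilibrium identity $m^*=\calF(m^*)$ and Theorem~\ref{thm:F-contraction} give
\[
\|m_{k+1}-m^*\|_1\le\|\widehat\calF_k(m_k)-\calF(m_k)\|_1+C_{\calF}\|m_k-m^*\|_1 .
\]
It therefore suffices to show $\E\bigl[\|\widehat\calF_k(m_k)-\calF(m_k)\|_1\mid\calG_k\bigr]\le\bar\eta_k$. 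Taking total expectations and unrolling the resulting scalar recursion $e_{k+1}\le C_{\calF}e_k+\bar\eta_k$ from $e_0=\|m_0-m^*\|_1$ then yields~\eqref{eq:mfg-learning-bound}.

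To bound the conditional error, I would apply Lemma~\ref{lem:approx-F} with $m=m_k$, $\widehat Q=\widehat Q_k$ and $\widehat P=\widehat P_k$, and treat the two terms separately.

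\textbf{Kernel term.} The fresh samples in step~(b) are independent of $\widehat Q_k$ given $\calG_k$, so the weights $m_k(x)\widehat\pi_k(a\mid x)$ can be pulled out of the conditional expectation. Lemma~\ref{lem:empirical} bounds each $\E\bigl[\|\widehat P_k(\cdot\mid x,a)-P(\cdot\mid x,a,m_k)\|_1\mid\calG_k\bigr]$ by $\sqrt{|S|/N_k}$. Since the weights sum to one, this term contributes $\sqrt{|S|/N_k}$.

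\textbf{Policy term.} By Jensen's inequality, $\E[\spn(\widehat Q_k-Q^*_{m_k})\mid\calG_k]\le\bigl(\E[\spn(\widehat Q_k-Q^*_{m_k})^2\mid\calG_k]\bigr)^{1/2}$. Conditionally on $\calG_k$, the inner run is Algorithm~\ref{alg:softQL} on the frozen MDP at population $m_k$, which satisfies Assumption~\ref{def:minor} with the same $\nu$ and $\varepsilon$. Theorem~\ref{thm:softQL-rate} therefore applies with $Q_0=0$. In that bound, $\spn(Q_0-Q^*_{m_k})=\spn(Q^*_{m_k})$, and Lemma~\ref{lem:span-Qm} gives $\spn(Q^*_{m_k})\le 2L_1/\varepsilon$. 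Substituting this into both terms of~\eqref{eq:softQL-bound} yields exactly $\beta(n_k)$. Multiplying by $K_1/(2\delta)$ gives the first summand of $\bar\eta_k$.

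\textbf{Policy bound~\eqref{eq:mfg-learning-policy}.} I would split
\[
\|\widehat\pi_k(\cdot\mid x)-\pi^*(\cdot\mid x)\|_1\le\|\widehat\pi_k(\cdot\mid x)-\pi^*_{m_k}(\cdot\mid x)\|_1+\|\pi^*_{m_k}(\cdot\mid x)-\pi^*_{m^*}(\cdot\mid x)\|_1 .
\]
For the first piece, the shift-invariant softmax bound~\eqref{eq:softmax_span_lip} gives at most $\spn(\widehat Q_k-Q^*_{m_k})/(2\delta)$, and the same conditional Jensen argument bounds its expectation by $\sqrt{\beta(n_k)}/(2\delta)$. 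For the second piece, Lemma~\ref{lem:lipschitz-policy} gives $\frac{C_Q}{2\delta}\|m_k-m^*\|_1$, and taking expectations finishes the proof.

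The main technical obstacle is the conditioning. Theorem~\ref{thm:softQL-rate} is stated for a fixed MDP, whereas here the population $m_k$ is random. I would handle this by noting that, given $\calG_k$, the inner samples are i.i.d.\ from $P(\cdot\mid x,a,m_k)$ and independent of the past, so the theorem applies conditionally. The remaining point is that all constants ($\varepsilon$, $d$, $n_0$, and the bound $2L_1/\varepsilon$) are uniform in $m$, so the conditional bound does not depend on $m_k$ and survives taking total expectation.
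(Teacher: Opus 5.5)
Your proposal is correct and follows the paper's proof essentially step for step. The shared steps are the one-step bound via Lemma~\ref{lem:approx-F}; the conditional application of Theorem~\ref{thm:softQL-rate} with $Q_0=0$, Lemma~\ref{lem:span-Qm} and Jensen's inequality to get $\sqrt{\beta(n_k)}$; Lemma~\ref{lem:empirical} for the kernel term; unrolling $e_{k+1}\le C_{\calF}e_k+\bar\eta_k$; and the triangle-inequality split with Lemma~\ref{lem:lipschitz-policy} for the policy bound.

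The only difference is cosmetic. For the kernel term the paper conditions on the larger $\sigma$-field $\calH_k\supseteq\calG_k$, which also contains the step-(a) samples, so that $\widehat\pi_k$ is measurable and can literally be pulled out of the conditional expectation. Your conditioning on $\calG_k$ also works, but through the conditional-independence factorization $\E[\widehat\pi_k(a\mid x)\,\|\widehat P_k-P\|_1\mid\calG_k]=\E[\widehat\pi_k(a\mid x)\mid\calG_k]\,\E[\|\widehat P_k-P\|_1\mid\calG_k]$ rather than by pulling the weights out.
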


\begin{proof}{Proof.}
Let $\calG_k$ be the $\sigma$-field generated by all simulator outputs used in outer iterations $0,\dots,k-1$, so that $m_k$ is $\calG_k$-measurable, and let $\calH_k\supseteq\calG_k$ be the $\sigma$-field generated additionally by the outputs used in step~(a) of iteration $k$, so that $\widehat Q_k$ and $\widehat\pi_k$ are $\calH_k$-measurable.

\smallskip\noindent\emph{Step 1 (inner Q-learning error).}
Conditionally on $\calG_k$, step~(a) is Algorithm~\ref{alg:softQL} applied to the MDP with kernel $P(\cdot\mid\cdot,\cdot,m_k)$ and reward $r(\cdot,\cdot,m_k)$, with fresh samples. Hence Theorem~\ref{thm:softQL-rate} applies conditionally on $\calG_k$. Since $Q_0^{(k)}=0$, we have $\spn(Q_0^{(k)}-Q^*_{m_k})=\spn(Q^*_{m_k})\le2L_1/\varepsilon$ by Lemma~\ref{lem:span-Qm}, and~\eqref{eq:softQL-bound} gives
\( \displaystyle
\E\bigl[\spn(\widehat Q_k-Q^*_{m_k})^2\,\big|\,\calG_k\bigr]\le\beta(n_k)\),
hence
$ \displaystyle \E\bigl[\spn(\widehat Q_k-Q^*_{m_k})\,\big|\,\calG_k\bigr]\le\sqrt{\beta(n_k)}$
by Jensen's inequality.

\smallskip\noindent\emph{Step 2 (kernel estimation error).}
Conditionally on $\calH_k$, the samples of step~(b) are $N_k$ i.i.d.\ draws from $P(\cdot| x,a,m_k)$ for each $(x,a)$, so Lemma~\ref{lem:empirical} gives $\E[\|\widehat P_k(\cdot| x,a)-P(\cdot| x,a,m_k)\|_1|\calH_k]\le\sqrt{|S|/N_k}$. Since $m_k$ and $\widehat\pi_k$ are $\calH_k$-measurable,
\begin{align*}
\E\Bigl[\sum_{x,a}m_k(x)\widehat\pi_k(a\mid x)\bigl\|\widehat P_k(\cdot| x,a)-P(\cdot| x,a,m_k)\bigr\|_1\,\Big|\,\calH_k\Bigr]
\le\sqrt{\frac{|S|}{N_k}}\sum_{x,a}m_k(x)\widehat\pi_k(a| x)
=\sqrt{\frac{|S|}{N_k}} .
\end{align*}

\smallskip\noindent\emph{Step 3 (one-step error and propagation).}
By Lemma~\ref{lem:approx-F} applied with $m=m_k$, $\widehat Q=\widehat Q_k$ and $\widehat P=\widehat P_k$, and by Steps~1 and~2,
\[
\E\|m_{k+1}-\calF(m_k)\|_1\le\sqrt{\frac{|S|}{N_k}}+\frac{K_1}{2\delta}\sqrt{\beta(n_k)}=\bar\eta_k .
\]
By Theorem~\ref{thm:F-contraction} and the triangle inequality,
\begin{align*}
\|m_{k+1}-m^*\|_1&\le\|\calF(m_k)-\calF(m^*)\|_1+\|m_{k+1}-\calF(m_k)\|_1\\
&\le C_{\calF}\|m_k-m^*\|_1+\|m_{k+1}-\calF(m_k)\|_1 .
\end{align*}
Taking expectations, $e_{k+1}\le C_{\calF}e_k+\bar\eta_k$ with $e_k:=\E\|m_k-m^*\|_1$, and induction on $k$ gives~\eqref{eq:mfg-learning-bound}.

\smallskip\noindent\emph{Step 4 (policy).}
By the triangle inequality, the shift-invariant softmax bound~\eqref{eq:softmax_span_lip} and Lemma~\ref{lem:lipschitz-policy},
\begin{align*}
\|\widehat\pi_k(\cdot\mid x)-\pi^*(\cdot\mid x)\|_1
&\le\|\sigma_\delta(\widehat Q_k(x,\cdot))-\sigma_\delta(Q^*_{m_k}(x,\cdot))\|_1+\|\pi^*_{m_k}(\cdot\mid x)-\pi^*_{m^*}(\cdot\mid x)\|_1\\
&\le\frac{\spn(\widehat Q_k-Q^*_{m_k})}{2\delta}+\frac{C_Q}{2\delta}\|m_k-m^*\|_1,
\end{align*}
and taking expectations and using Step~1 gives~\eqref{eq:mfg-learning-policy}.\Halmos
\end{proof}

\begin{corollary}[Sample complexity]
\label{cor:budgets}
Under the assumptions of Theorem~\ref{thm:mfg-learning}:
\begin{enumerate}
\item If $n_k\to\infty$ and $N_k\to\infty$, then $\E\|m_k-m^*\|_1\to0$ and $\E\|\widehat\pi_k(\cdot\mid x)-\pi^*(\cdot\mid x)\|_1\to0$ for every $x\in S$.
\item If $n_k\equiv n$ and $N_k\equiv N$, then for every $k\ge0$ and $x\in S$,
\begin{align*}
\E\|m_k-m^*\|_1&\le C_{\calF}^{\,k}\|m_0-m^*\|_1+\frac{1}{1-C_{\calF}}\left[\frac{K_1}{2\delta}\sqrt{\beta(n)}+\sqrt{\frac{|S|}{N}}\right],\\
\E\|\widehat\pi_k(\cdot\mid x)-\pi^*(\cdot\mid x)\|_1&\le\frac{C_Q}{2\delta}\,C_{\calF}^{\,k}\|m_0-m^*\|_1\\
&\quad+\frac{1}{2\delta}\left[\Bigl(1+\frac{C_QK_1}{2\delta(1-C_{\calF})}\Bigr)\sqrt{\beta(n)}+\frac{C_Q}{1-C_{\calF}}\sqrt{\frac{|S|}{N}}\right],
\end{align*}
and in both bounds the bracket is $O\bigl(n^{-1/2}+N^{-1/2}\bigr)$.
\item Let $\theta\in(C_{\calF},1)$ and choose the number of iterations in the inner loops so that
\(
\sqrt{\beta(n_k)}\le\eta_1\theta^k \) and \(\sqrt{|S|/N_k}\le\eta_2\theta^k\) for all $k\ge0$
and some constants $\eta_1,\eta_2>0$; since $(n_0/(n+n_0))^2\le n_0/(n+n_0)$ in~\eqref{eq:beta-eta}, this holds whenever $n_k+n_0\ge c_1\theta^{-2k}$ and $N_k\ge c_2\theta^{-2k}$, where
\[
c_1:=\frac{1}{\eta_1^2}\Bigl(\frac{2L_1}{\varepsilon}\Bigr)^2\Bigl[\sqrt e\,n_0+\frac{8192\,e^2\log d}{\varepsilon^3}\Bigr],
\qquad
c_2:=\frac{|S|}{\eta_2^2}.
\]
Then, with $\eta:=\frac{K_1}{2\delta}\eta_1+\eta_2$, for every $k\ge0$ and $x\in S$,
\begin{align*}
\E\|m_k-m^*\|_1&\le\Bigl(\|m_0-m^*\|_1+\frac{\eta}{\theta-C_{\calF}}\Bigr)\theta^{k},\\
\E\|\widehat\pi_k(\cdot\mid x)-\pi^*(\cdot\mid x)\|_1&\le\frac{1}{2\delta}\Bigl[\eta_1+C_Q\Bigl(\|m_0-m^*\|_1+\frac{\eta}{\theta-C_{\calF}}\Bigr)\Bigr]\theta^{k}.
\end{align*}
\end{enumerate}
\end{corollary}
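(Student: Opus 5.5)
All three parts follow from the two bounds of Theorem~\ref{thm:mfg-learning}, namely \eqref{eq:mfg-learning-bound} for the population and \eqref{eq:mfg-learning-policy} for the policy, by elementary manipulations of the recursion $e_{k+1}\le C_{\calF}e_k+\bar\eta_k$ with $C_{\calF}<1$. The plan is to handle the population error first in each regime and then transfer it to the policy through \eqref{eq:mfg-learning-policy}.

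\emph{Part 1.} First, $\beta(n)\to0$ as $n\to\infty$, because both terms in the bracket of \eqref{eq:beta-eta} vanish. Hence $\bar\eta_k\to0$ when $n_k,N_k\to\infty$. The geometric term $C_{\calF}^k\|m_0-m^*\|_1$ tends to zero. For the convolution $\sum_{j<k}C_{\calF}^{k-1-j}\bar\eta_j$ we use the standard Toeplitz argument. Fix $J$ such that $\bar\eta_j\le\epsilon$ for $j\ge J$. Split the sum at $J$. The head is bounded by $C_{\calF}^{k-J}\sum_{j<J}\bar\eta_j\to0$, and the tail is bounded by $\epsilon/(1-C_{\calF})$. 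This gives $\E\|m_k-m^*\|_1\to0$, and then \eqref{eq:mfg-learning-policy} gives the policy claim because $\beta(n_k)\to0$.

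\emph{Part 2.} With constant budgets, $\bar\eta_j\equiv\bar\eta$, and the geometric sum is at most $\bar\eta/(1-C_{\calF})$. This is the first display. For the policy, substitute this bound into \eqref{eq:mfg-learning-policy}. The term $\frac{C_Q}{2\delta}\cdot\frac{1}{1-C_{\calF}}\cdot\frac{K_1}{2\delta}\sqrt{\beta(n)}$ combines with $\frac{1}{2\delta}\sqrt{\beta(n)}$ to produce the coefficient $1+\frac{C_QK_1}{2\delta(1-C_{\calF})}$, and the empirical term carries $\frac{C_Q}{1-C_{\calF}}$. For the $O(n^{-1/2}+N^{-1/2})$ claim, note that $\beta(n)=O(1/n)$, since $(n_0/(n+n_0))^2\le n_0/(n+n_0)$. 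Therefore $\sqrt{\beta(n)}=O(n^{-1/2})$.

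\emph{Part 3.} The main task is to verify the sufficient sample-size conditions and then sum a geometric series with two ratios. Using $(n_0/(n+n_0))^2\le n_0/(n+n_0)$, we get
\[
\beta(n)\le\Bigl(\frac{2L_1}{\varepsilon}\Bigr)^2\Bigl[\sqrt e\,n_0+\frac{8192e^2\log d}{\varepsilon^3}\Bigr]\frac{1}{n+n_0}=\frac{c_1\eta_1^2}{n+n_0}.
\]
So $n_k+n_0\ge c_1\theta^{-2k}$ gives $\sqrt{\beta(n_k)}\le\eta_1\theta^k$. Similarly $N_k\ge c_2\theta^{-2k}$ gives $\sqrt{|S|/N_k}\le\eta_2\theta^k$. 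Hence $\bar\eta_j\le\eta\theta^j$. Then
\[
\sum_{j<k}C_{\calF}^{k-1-j}\eta\theta^j=\eta\,\frac{\theta^k-C_{\calF}^k}{\theta-C_{\calF}}\le\frac{\eta\,\theta^k}{\theta-C_{\calF}},
\]
and $C_{\calF}^k\le\theta^k$. Together these give the population bound. Inserting it and $\sqrt{\beta(n_k)}\le\eta_1\theta^k$ into \eqref{eq:mfg-learning-policy} gives the policy bound. I expect no real obstacle. The only points needing care are the exact geometric-sum identity and the direction of the inequality $(n_0/(n+n_0))^2\le n_0/(n+n_0)$.
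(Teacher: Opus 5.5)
Your proposal is correct and follows essentially the same route as the paper. It uses the split of the convolution sum at $J$ with the head killed by $C_{\calF}^{k-J}$ for part~1, and the bound $1/(1-C_{\calF})$ on the geometric sum substituted into~\eqref{eq:mfg-learning-policy} for part~2. For part~3 it uses $\beta(n)\le c_1\eta_1^2/(n+n_0)$ to get $\bar\eta_j\le\eta\theta^j$, then the two-ratio geometric series bounded by $\eta\theta^k/(\theta-C_{\calF})$; your exact identity $(\theta^k-C_{\calF}^k)/(\theta-C_{\calF})$ is just a sharper form of the paper's factoring of $\theta^{k-1}$ and summing the series in $C_{\calF}/\theta$.
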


\begin{proof}{Proof.}
(1) By~\eqref{eq:beta-eta}, $\bar\eta_k\to0$. For $J<k$,
\[
\sum_{j=0}^{k-1}C_{\calF}^{k-1-j}\bar\eta_j
\le C_{\calF}^{\,k-J}\sum_{j=0}^{J-1}C_{\calF}^{J-1-j}\bar\eta_j+\frac{\sup_{j\ge J}\bar\eta_j}{1-C_{\calF}} ;
\]
letting first $k\to\infty$ and then $J\to\infty$ shows that the sum in~\eqref{eq:mfg-learning-bound} tends to zero, and the policy statement follows from~\eqref{eq:mfg-learning-policy}. (2) Bound the geometric sum in~\eqref{eq:mfg-learning-bound} by $1/(1-C_{\calF})$, which gives the first bound, and insert it into~\eqref{eq:mfg-learning-policy}, which gives the second; the order statement follows from $(n_0/(n+n_0))^2\le n_0/(n+n_0)$ in~\eqref{eq:beta-eta}. (3) The growth conditions give $\beta(n_k)\le c_1\eta_1^2/(n_k+n_0)\le\eta_1^2\theta^{2k}$ and $|S|/N_k\le\eta_2^2\theta^{2k}$, hence $\bar\eta_k\le\eta\,\theta^k$. Then
\[
\sum_{j=0}^{k-1}C_{\calF}^{k-1-j}\bar\eta_j\le\eta\,\theta^{k-1}\sum_{j=0}^{k-1}\Bigl(\frac{C_{\calF}}{\theta}\Bigr)^{k-1-j}\le\frac{\eta\,\theta^k}{\theta-C_{\calF}} ,
\]
which together with~\eqref{eq:mfg-learning-bound} gives the bound on $\E\|m_k-m^*\|_1$; inserting it and $\sqrt{\beta(n_k)}\le\eta_1\theta^k$ into~\eqref{eq:mfg-learning-policy} gives the bound on the policy.\Halmos
\end{proof}

\section{Numerical Experiments}
\label{sec:numerics}

We illustrate the theory on a congestion-aware routing game. We first compute the exact mean-field equilibrium by the Banach fixed-point iteration. Then we run the model-free Algorithms~\ref{alg:softQL} and~\ref{alg:mfg-learning} on the same instance and compare their behavior with the rates of Section~\ref{sec:learning}.

\subsection{The model as a network-congestion game}
\label{sub:val-network}

The scenario is a stylized communication network: a large number of users repeatedly establish a connection to a server, each
choosing between two routes. The \emph{stable} route is reliable but expensive;
the \emph{cheap} route is inexpensive but its quality degrades as more users
load the network. Formally,
\[
  S = \{G, B\} \quad (\text{good / bad connection}),
  \qquad
  A = \{\mathsf{S}, \mathsf{C}\} \quad (\text{Stable / Cheap route}),
\]
and the population coordinate $m_B$ is the \emph{congestion level}, i.e.\ the
fraction of users currently experiencing a bad connection. The mean-field
coupling is genuine: both the transition kernel and the reward depend on $m_B$,
so each user's prospects are shaped by the aggregate behavior of all the others.
The transition kernel and one-stage reward are
\begin{equation}
  P(G \mid x, a, m) = g_{x,a} - \lambda\, m_B,
  \qquad
  r(x, a, m) = w\,\mathbf 1\{x = G\} - \mathrm{price}_a - \kappa\, m_B,
  \label{eq:val-primitives}
\end{equation}
with the route-quality table $g$, the congestion slope $\lambda$, the
connection value $w$, the route prices, the congestion penalty $\kappa$, and the
entropy temperature $\delta$ collected in Table~\ref{tab:val-params}. The
derived constants of Assumptions~\ref{ass:lip_primitives}--\ref{ass:minor_mfg}
and the contraction threshold~\eqref{eq:delta-large}
are also listed there; in particular $K_1 = 0.20 < \varepsilon =
0.68$ and $\delta = 2.0 > \delta^\star \approx 0.793$, so all hypotheses of
Theorem~\ref{thm:F-contraction} hold and $\mathcal{F}$ is a strict
contraction with modulus $C_{\mathcal{F}} \approx 0.71$.

\begin{table}[t]
  \centering
  \caption{Parameters of the congestion-aware routing instance. In the route-quality table $g$, rows are indexed by the state $x\in\{G,B\}$ and columns by the action $a\in\{\mathsf S,\mathsf C\}$. The derived constants satisfy $K_1 < \varepsilon$ and $\delta > \delta^\star$, so the mean-field operator is a contraction. \label{tab:val-params}}
  \begin{tabular}{lll}
    \toprule
    \multicolumn{3}{l}{\textbf{Model primitives}}\\
    \midrule
    Route-quality table & $g$ &
      $\left[\begin{smallmatrix} 0.85 & 0.78\\ 0.82 & 0.68 \end{smallmatrix}\right]$\\
    Congestion slope        & $\lambda$ & $0.15$\\
    Connection value        & $w$ & $1.00$\\
    Route prices            & $(\mathrm{price}_{\mathsf S},\mathrm{price}_{\mathsf C})$ & $(0.50,\,0.20)$\\
    Congestion penalty      & $\kappa$ & $0.40$\\
    Entropy temperature     & $\delta$ & $2.00$\\
    \midrule
    \multicolumn{3}{l}{\textbf{Derived constants}}\\
    \midrule
    Kernel Lipschitz constant   & $K_1$ & $0.20$\\
    Reward Lipschitz constant   & $L_1$ & $1.00$\\
    Minorization mass           & $\varepsilon$ & $0.68$\\
    Temperature threshold       & $\delta^\star$ & $0.793$\\
    Contraction modulus         & $C_{\mathcal{F}}$ & $0.71$\\
    \bottomrule
  \end{tabular}
\end{table}

\subsection{Mean-field equilibrium}

We run the Banach iteration $m^{(k+1)} = \mathcal{F}(m^{(k)})$ of
Corollary~\ref{cor:MFNE} from $m^{(0)} = (\tfrac12,\tfrac12)$.
Each step solves the soft Q-AROE \eqref{eq:soft_Q_AROE} at the current
population by relative value iteration on $L_\delta$ (Corollary~\ref{cor:Lhat_soft_banach}) and forms the softmax policy
\eqref{eq:mfg_softmax} in closed form. This procedure uses no
randomness whatsoever and converges to the exact equilibrium $m^*$ (to numerical
tolerance) in $20$ outer iterations. The results can be found in Table~\ref{tab:val-compare}.

\begin{table}[ht]
  \centering
  \caption{Equilibrium computed by the deterministic mean-field method. The associated optimal entropy-regularized average reward is $\rho^*_\delta = 1.7076$, and the equilibrium policy is $\pi^*(\cdot\mid G) = (\mathsf S\!:\nobreak\!0.472,\,\mathsf C\!:\nobreak\!0.528)$, $\pi^*(\cdot\mid B) = (\mathsf S\!:\nobreak\!0.481,\,\mathsf C\!:\nobreak\!0.519)$. \label{tab:val-compare}}
  \begin{tabular}{lc}
    \toprule
    component &
      \makecell{mean-field term \\ $m^*$}\\
    \midrule
    $m_G$ (good connection) & $0.761626$ \\
    $m_B$ (congestion)      & $0.238374$ \\
    \bottomrule
  \end{tabular}
\end{table}

\subsection{Model-free learning}
\label{sub:learning-exp}

We now run the model-free algorithms of Section~\ref{sec:learning} on the same instance, using a simulator that, given $(x,a,m)$, returns $r(x,a,m)$ and a sample from $P(\cdot\mid x,a,m)$ in~\eqref{eq:val-primitives}. All expectations below are estimated by averaging over independent runs ($200$ runs in panel~(a) of Figure~\ref{fig:learning}, $100$ runs in panel~(b), $50$ runs in panel~(c)). The temperature is $\delta=2$ as before, so that Theorems~\ref{thm:softQL-rate} and~\ref{thm:mfg-learning} apply with $\varepsilon=0.68$, $K_1=0.2$, $L_1=1$ and $d=|S||A|=4$; the step-size constant is $a=4/\varepsilon\approx5.88$ and the offset is $f(Q)=Q(G,\mathsf S)$.

\paragraph{Soft Q-learning at a fixed population.}
Panel~(a) of Figure~\ref{fig:learning} shows the mean-square span error $\E[\spn(Q_n-Q^*_{\delta})^2]$ of Algorithm~\ref{alg:softQL} run at the equilibrium population $m^*$, where $Q^*_\delta=Q^*_{m^*}$ is computed by relative value iteration. Two step-size sequences are used: $\alpha_n=a/(n+n_0)$ with the value $n_0=\lceil2560\,e\log d/\varepsilon^3\rceil=30{,}681$ required by Theorem~\ref{thm:softQL-rate}, and the same sequence with $n_0=100$. In both cases the error eventually decays at the rate $1/n$ predicted by~\eqref{eq:softQL-bound}: the least-squares slopes of the two curves are $-0.88$ and $-0.93$.

\paragraph{Model-free mean-field iteration.}
Panel~(b) shows $\E\|m_k-m^*\|_1$ for Algorithm~\ref{alg:mfg-learning} started from $m_0=(\tfrac12,\tfrac12)$ with a constant number of inner iterations $n_k=N_k\in\{10^2,10^3,10^4\}$ and $n_0=100$, together with the error of the exact (model-based) iteration of Corollary~\ref{cor:MFNE}. As predicted by Corollary~\ref{cor:budgets}(2), the model-free iteration first tracks the geometric decay of the exact iteration and then settles at a floor whose height decreases with the number of iterations: the floors are approximately $4\times10^{-2}$, $1.2\times10^{-2}$ and $3.7\times10^{-3}$ for the three values, i.e.\ they shrink by a factor of about $\sqrt{10}$ when the number of iterations is multiplied by $10$, which is the $O(n^{-1/2}+N^{-1/2})$ dependence of~\eqref{eq:beta-eta}. Panel~(c) uses geometrically growing numbers of inner iterations $n_k=N_k=\lceil b\,\theta^{-2k}\rceil$ with $\theta=0.8>C_{\calF}$ and $b\in\{50,200\}$, and plots the error against the outer iteration $k$. The error then decreases without a floor, geometrically in $k$, at a rate consistent with the bound $\theta^k$ of Corollary~\ref{cor:budgets}(3) (dotted line).

\begin{figure}[t]
  \centering
  \includegraphics[width=\textwidth]{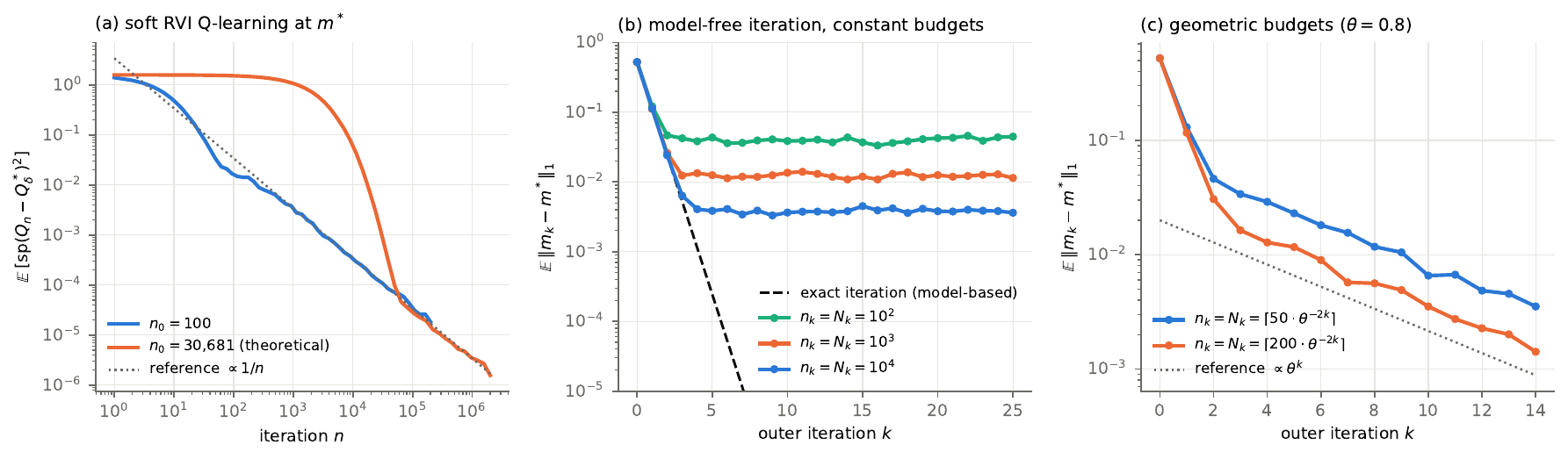}
  \caption{Model-free learning on the congestion-aware routing instance ($\delta=2$). (a) Mean-square span error of synchronous soft RVI Q-learning (Algorithm~\ref{alg:softQL}) at the equilibrium population, for the step sizes $\alpha_n=a/(n+n_0)$ with $a=4/\varepsilon$ and two values of $n_0$; the dotted line is proportional to $1/n$. (b) Model-free mean-field iteration (Algorithm~\ref{alg:mfg-learning}) with a constant number of inner iterations, compared with the exact iteration; the error settles at a floor of order $n^{-1/2}+N^{-1/2}$. (c) The same algorithm with geometrically growing budgets $n_k=N_k=\lceil b\,\theta^{-2k}\rceil$, $\theta=0.8$; the dotted line is proportional to $\theta^{k}$. \label{fig:learning}}
\end{figure}

\section{Conclusion}
\label{sec:conclusion}

This paper studied discrete-time mean-field games under the long-run average-reward criterion with an entropy-regularized objective. The unifying idea was a single substitution: the hard maximization over actions in the average-reward optimality equation was replaced by an entropy-penalized maximization over stochastic policies, whose unique solution is the softmax distribution and whose value is the log-sum-exp. Since the log-sum-exp inherits the one-Lipschitz character of the hard maximum, the soft Q-operator is a span-seminorm contraction at the same rate and under the same minorization condition as the classical one, which gives existence, uniqueness up to an additive constant and iterative computability of solutions to the soft optimality equation; the span-seminorm formulation never uses the minorizing measure, which is what makes it compatible with learning.

At the level of the game, the softmax policy depends Lipschitz continuously on the population, with a constant set by the temperature, so that the mean-field operator is a contraction on the simplex whenever the temperature is large enough and $K_1<\varepsilon$, and the equilibrium exists, its population distribution is unique, and it is computed by a geometrically convergent iteration. Replacing the two model-dependent steps of this iteration by synchronous soft relative value $Q$-learning and by an empirical transition kernel yields a model-free algorithm: the reduction of the span-contractive recursion to a supremum-norm contractive stochastic approximation gives an $O(1/n)$ mean-square bound for the inner step through the theory of \citet{ChMaShSh20}, with only a logarithmic dependence on the size of the state-action space, and an inexact Banach iteration argument turns the two errors into explicit convergence rates for the learned population and policy. The numerical experiments on a congestion-aware routing model confirmed these results.

\appendix
\appendixheadings

\section{Proofs for Section~\ref{sec:classical}}\label{app:sec2}
\label{app:classical}

\subsection{Proof of Theorem~\ref{thm:Lhat_span}}
\label{app:thm:Lhat_span}

\begin{proof}{Proof.}
Let $\Phi:=Q-Q'$ and
$\Phi_{\max}(y):=\max_{b\in A(y)}Q(y,b)-\max_{b\in A}Q'(y,b)$. For any $(x,a)$, we have
\[
(LQ)(x,a)-(LQ')(x,a)
=\sum_{y}P(y\mid x,a) \, \Phi_{\max}(y).
\]
If we decompose $P=\widehat{P}+\nu$, we get
\[
\sum_y P(y\mid x,a) \Phi_{\max}(y)
=\sum_y \nu(y)\,\Phi_{\max}(y)
+\sum_y\widehat{P}(y\mid x,a) \Phi_{\max}(y).
\]
The constant $c_\Phi\coloneqq \sum_y \nu(y)\,\Phi_{\max}(y) $ cancels in the span. Since $\widehat{P} \ge 0$ with mass
$1-\varepsilon$,
\[
\spn\!\left(LQ-LQ'\right)
\le(1-\varepsilon)\,\spn(\Phi_{\max}).
\]
If we use $\max_b f-\max_b g \le\max_b (f-g)$ and $\max_b f-\max_b g \geq \min_b (f-g)$, we get
\[
\spn(\Phi_{\max})
\le\max_{y,b}\Phi(y,b)-\min_{y,b}\Phi(y,b)=\spn(\Phi)=\spn(Q-Q').
\]
This completes the proof.\Halmos
\end{proof}

\subsection{Proof of Corollary~\ref{cor:Lhat_banach}}
\label{app:cor:Lhat_banach}

\begin{proof}{Proof.}
Since $L$ is a contraction in the span seminorm,
$\widetilde{L}$ is also a contraction on
$\mathbb{R}^{S\times A}_{(x_0,a_0)}$. Hence, by Banach's fixed point
theorem, it admits a unique fixed point $Q^*$. By definition of $\widetilde{L}$, we have
\[
LQ^*(x,a)
=
LQ^*(x_0,a_0)+Q^*(x,a),
\]
for all $(x,a)\in S\times A$. Setting $\rho^* :=
LQ^*(x_0,a_0)$, we obtain
$\rho^* + Q^*(x,a) = LQ^*(x,a)$,
which is precisely the Q-AROE.

For the second claim, we define
$\Delta_n(x,a):=L^{n+1}Q_0(x,a)-L^n Q_0(x,a)$.
By contraction, we have
\[
\spn(\Delta_n)\le(1-\varepsilon)^n\spn(LQ_0-Q_0)\to 0,
\]
which implies
$\max_{x,a}\Delta_n(x,a)-\min_{x,a}\Delta_n(x,a)\to 0$.
Using the normalization, we get
$\widetilde{L}^n Q_0(x,a)
=
L^n Q_0(x,a)-L^n Q_0(x_0,a_0)$.
Applying $L$ to both sides gives (recall that $L(Q-c) = LQ -c$):
\[
L\widetilde{L}^n Q_0(x,a)
=
L^{n+1}Q_0(x,a)-L^n Q_0(x_0,a_0).
\]
Taking limits and using $\widetilde{L}^n Q_0 \to Q^*$, we obtain when $(x,a) = (x_0,a_0)$,
\[
\lim_{n\to\infty}\Delta_n(x_0,a_0)
=
LQ^*(x_0,a_0)
=
\rho^*.
\]
Finally, since $\spn(\Delta_n)\to 0$, the convergence holds uniformly in
$(x,a)$, which completes the proof.\Halmos
\end{proof}

\section{Proofs for Section~\ref{sec:entropy}}\label{app:sec3}
\label{app:entropy}

\subsection{Proof of Lemma~\ref{lem:lse_lip}}
\label{app:lem:lse_lip}

\begin{proof}{Proof.}
Write $M:=\max_{a\in E}|u(a)-v(a)|$, so $v(a)-M\le u(a)\le v(a)+M$ for every
$a\in E$. Exponentiating and dividing by $\delta$ (monotone operations),
\[
\exp((v(a)-M)/\delta)\le\exp(u(a)/\delta)\le\exp((v(a)+M)/\delta).
\]
Summing over $a\in E$ and using that $\exp(\pm M/\delta)$ pulls out as a constant,
\[
e^{-M/\delta}\sum_{a\in E}\exp(v(a)/\delta)\le\sum_{a\in E}\exp(u(a)/\delta)\le e^{M/\delta}\sum_{a\in E}\exp(v(a)/\delta).
\]
Taking $\delta\log(\cdot)$ throughout (monotone),
\[
-M+\delta\log\sum_{a\in E}\exp(v(a)/\delta)
\le\delta\log\sum_{a\in E}\exp(u(a)/\delta)
\le M+\delta\log\sum_{a\in E}\exp(v(a)/\delta),
\]
which gives the claim.\Halmos
\end{proof}

\subsection{Proof of Theorem~\ref{thm:Lhat_delta_span}}
\label{app:thm:Lhat_delta_span}

\begin{proof}{Proof.}
Let $\Phi:=Q-Q'$ and define
\begin{align*}
\Phi_{\sigma}(y):=\lse_\delta(Q(y,\cdot))-\lse_\delta(Q'(y,\cdot))
=\delta\log\sum_b\exp(Q(y,b)/\delta)
-\delta\log\sum_b\exp(Q'(y,b)/\delta).
\end{align*}
For any $(x,a)$,
\[
(L_\delta Q)(x,a)-(L_\delta Q')(x,a)
=\sum_{y\in S}\Phi_{\sigma}(y)\,P(y\mid x,a).
\]
Decompose $P=\widehat{P}+\nu$ to get
\[
\sum_y\Phi_{\sigma}(y)P(y\mid x,a)
=\sum_y\Phi_{\sigma}(y)\nu(y)
+\sum_y\Phi_{\sigma}(y)\widehat{P}(y\mid x,a).
\]
The constant $c_\Phi \coloneqq \sum_y\Phi_{\sigma}(y)\nu(y) $ does not depend on $(x,a)$ and therefore cancels
in the span. Since $\widehat{P}\ge 0$ with mass
$1-\varepsilon$,
\[
\spn\!\left(L_\delta Q-L_\delta Q'\right)
\le(1-\varepsilon)\,\spn(\Phi_{\sigma}).
\]
Let us bound $\spn(\Phi_{\sigma})$ in terms of $\spn(\Phi)$. For fixed $y$, we have the pointwise inequalities $Q'(y,b)+\min_b\Phi(y,b)\le Q(y,b)\le Q'(y,b)+\max_b\Phi(y,b)$ for all $b\in A(y)$; since the log-sum-exp is monotone and shift-equivariant (Remark~\ref{rem:lse}), applying $\lse_\delta$ to these inequalities gives
\[
\min_b\Phi(y,b)\le\Phi_{\sigma}(y)\le\max_b\Phi(y,b).
\]
Hence
\[
\spn(\Phi_{\sigma})
\le\max_{y,b}\Phi(y,b)-\min_{y,b}\Phi(y,b)
=\spn(\Phi)=\spn(Q-Q').
\]
This completes the proof.\Halmos
\end{proof}

\subsection{Proof of Corollary~\ref{cor:Lhat_soft_banach}}
\label{app:cor:Lhat_soft_banach}

\begin{proof}{Proof.}
Since $L_\delta$ is a contraction in the span seminorm,
the normalized operator
$\widetilde{L}_\delta$
is also a contraction on
$\mathbb{R}^{S\times A}_{(x_0,a_0)}$.
Hence, by Banach's fixed point theorem, there exists a unique
$Q_\delta^*\in\mathbb{R}^{S\times A}_{(x_0,a_0)}$
such that
$\widetilde{L}_\delta Q_\delta^*
=
Q_\delta^*$.
Equivalently,
$L_\delta Q_\delta^*(x,a)
=
L_\delta Q_\delta^*(x_0,a_0)
+
Q_\delta^*(x,a)$
for all $(x,a)\in S\times A$. Therefore,
$\rho_\delta^*
+
Q_\delta^*(x,a)
=
(L_\delta Q_\delta^*)(x,a)$,
where
$\rho_\delta^*
:=
L_\delta Q_\delta^*(x_0,a_0)$.
Thus, the pair $(\rho_\delta^*,Q_\delta^*)$ solves the soft Q-AROE.

For the second claim, define
$\Delta_n(x,a)
:=
L_\delta^{\,n+1}Q_0(x,a)
-
L_\delta^{\,n}Q_0(x,a)$.
Since $L_\delta$ is a span-seminorm contraction,
\[
\spn(\Delta_n)
\le
(1-\varepsilon)^n
\spn(L_\delta Q_0-Q_0)
\to 0.
\]
Hence,
$\max_{x,a}\Delta_n(x,a)
-
\min_{x,a}\Delta_n(x,a)
\to 0$,
which shows that $\Delta_n(x,a)$ becomes asymptotically independent of $(x,a)$.
Next, observe that for every $Q_0$,
$\widetilde{L}_\delta^{\,n}Q_0(x,a)
=
L_\delta^{\,n}Q_0(x,a)
-
L_\delta^{\,n}Q_0(x_0,a_0)$.
Applying $L_\delta$ to both sides and using the translation invariance property
$L_\delta(Q-c)
=
L_\delta Q-c$,
we obtain
\[
L_\delta\widetilde{L}_\delta^{\,n}Q_0(x,a)
=
L_\delta^{\,n+1}Q_0(x,a)
-
L_\delta^{\,n}Q_0(x_0,a_0).
\]
Since
$\widetilde{L}_\delta^{\,n}Q_0\to Q_\delta^*$,
continuity of $L_\delta$ implies
$L_\delta\widetilde{L}_\delta^{\,n}Q_0(x,a)
\to
L_\delta Q_\delta^*(x,a)$.
Choosing $(x,a)=(x_0,a_0)$, we obtain
\[
\lim_{n\to\infty}
\left(
L_\delta^{\,n+1}Q_0(x_0,a_0)
-
L_\delta^{\,n}Q_0(x_0,a_0)
\right)
=
L_\delta Q_\delta^*(x_0,a_0)
=
\rho_\delta^*.
\]
Finally, since
$\max_{x,a}\Delta_n(x,a)
-
\min_{x,a}\Delta_n(x,a)
\to 0$,
the same limit holds for every $(x,a)\in S\times A$. This completes the proof.\Halmos
\end{proof}

\subsection{Proof of Lemma~\ref{lem:hessian_lse}}
\label{app:lem:hessian_lse}

\begin{proof}{Proof.}
Set $p(a):=\sigma_\delta(z)(a)$ and
$Z(z):=\sum_{b=1}^n\exp(z(b)/\delta)$, so
$\lse_\delta(z)=\delta\log Z(z)$ and $p(a)=\exp(z(a)/\delta)/Z(z)$.
For each $a\in\{1,\ldots,n\}$,
\[
\frac{\partial \lse_\delta}{\partial z(a)}
=\frac{\delta}{Z(z)}\cdot\frac{\partial Z}{\partial z(a)}
=\frac{\delta}{Z(z)}\cdot\frac{1}{\delta}\exp(z(a)/\delta)
=\frac{\exp(z(a)/\delta)}{Z(z)}
=p(a),
\]
so $\nabla\lse_\delta(z)=\sigma_\delta(z)$ as claimed.

Let us first compute the diagonal entries of the Hessian. For each $a$,
$\partial^2\lse_\delta/\partial z(a)^2
=\partial p(a)/\partial z(a)$.
Using the quotient rule,
\begin{align*}
\frac{\partial p(a)}{\partial z(a)}
&=\frac{\frac{1}{\delta}\exp(z(a)/\delta)\cdot Z(z)
        -\exp(z(a)/\delta)\cdot\frac{1}{\delta}\exp(z(a)/\delta)}{Z(z)^2}\\
&=\frac{1}{\delta}\cdot\frac{\exp(z(a)/\delta)}{Z(z)}
  -\frac{1}{\delta}\cdot\!\left(\frac{\exp(z(a)/\delta)}{Z(z)}\right)^2
=\frac{1}{\delta}\bigl(p(a)-p(a)^2\bigr).
\end{align*}
We now compute the off-diagonal entries. For $a\neq b$,
$\partial^2\lse_\delta/\partial z(a)\,\partial z(b)
=\partial p(a)/\partial z(b)$.
Now $p(a)=\exp(z(a)/\delta)/Z(z)$ depends on $z(b)$ only through
$Z(z)$, so
\[
\frac{\partial p(a)}{\partial z(b)}
=-\frac{\exp(z(a)/\delta)}{Z(z)^2}\cdot\frac{\partial Z}{\partial z(b)}
=-\frac{\exp(z(a)/\delta)}{Z(z)^2}\cdot\frac{1}{\delta}\exp(z(b)/\delta)
=-\frac{1}{\delta}p(a)p(b).
\]
Therefore, the diagonal entries are $\frac{1}{\delta}(p(a)-p(a)^2)$ and the
off-diagonal entries are $-\frac{1}{\delta}p(a)p(b)$. Together,
\[
\bigl[\nabla^2\lse_\delta(z)\bigr]_{a,b}
=\frac{1}{\delta}\bigl(p(a)\delta_{ab}-p(a)p(b)\bigr),
\]
which in matrix form is exactly~\eqref{eq:hessian_form}.\Halmos
\end{proof}

\subsection{Proof of Lemma~\ref{lem:opnorm_M_linf_l1}}
\label{app:lem:opnorm_M_linf_l1}

\begin{proof}{Proof.}
Fix $v\in\mathbb{R}^n$ with $\|v\|_\infty\le 1$. Since
$(M(p)v)(a)
=
p(a)v(a)-p(a)\sum_b p(b)v(b)$,
we may write
$(M(p)v)(a)
=
p(a)\big(v(a)-\sum_b p(b)v(b)\big)$.
Define
$m:=\sum_b p(b)v(b)$.
Then
$\|M(p)v\|_1
=
\sum_a p(a)\,|v(a)-m|$.
Since the map
$v\mapsto \sum_a p(a)|v(a)-m|$
is convex in $v$, its maximum over the cube
$\{v:\|v\|_\infty\le 1\}$ is attained at an extreme point. Therefore,
it suffices to consider vectors satisfying
$v(a)\in\{-1,1\}$.
Let
$A:=\{a:v(a)=1\}$ and
$\alpha:=\sum_{a\in A}p(a)$.
Then we have
$m
=
\sum_a p(a)v(a)
=
\alpha-(1-\alpha)
=
2\alpha-1$.
Now let us compute:
\[
\|M(p)v\|_1
=
\sum_{a\in A} p(a)|1-m|
+
\sum_{a\notin A} p(a)|-1-m|.
\]
Since $m=2\alpha-1$,
$1-m=2(1-\alpha)$ and
$|-1-m|=2\alpha$.
Therefore,
\[
\|M(p)v\|_1
=
2\alpha(1-\alpha)
+
2(1-\alpha)\alpha
=
4\alpha(1-\alpha).
\]
Finally,
$4\alpha(1-\alpha)\le 1$,
because the quadratic term $\alpha(1-\alpha)$ is maximized at
$\alpha=1/2$, where it equals $1/4$. Hence,
$\|M(p)v\|_1\le 1$.
Taking the supremum over all $\|v\|_\infty\le 1$ completes the proof.\Halmos
\end{proof}

\subsection{Proof of Corollary~\ref{cor:hessian_opnorm}}
\label{app:cor:hessian_opnorm}

\begin{proof}{Proof.}
By Lemma~\ref{lem:hessian_lse},
$\nabla^2\lse_\delta(z)
=
\frac{1}{\delta}M(\sigma_\delta(z))$.
Since $\sigma_\delta(z)\in\Delta^{n-1}$,
Lemma~\ref{lem:opnorm_M_linf_l1} yields
$\|M(\sigma_\delta(z))\|_{\infty\to 1}\le 1$.
Therefore,
$\bigl\|\nabla^2\lse_\delta(z)\bigr\|_{\infty\to 1}
=
\frac{1}{\delta}
\bigl\|M(\sigma_\delta(z))\bigr\|_{\infty\to 1}
\le
\frac{1}{\delta}$.\Halmos
\end{proof}

\subsection{Proof of Lemma~\ref{lem:softmax_linf_l1}}
\label{app:lem:softmax_linf_l1}

\begin{proof}{Proof.}
Since $\sigma_\delta=\nabla\lse_\delta$ is the gradient of a
$C^2$ function, the fundamental theorem of calculus along the line
segment joining $z'$ and $z$ yields
\[
\sigma_\delta(z)-\sigma_\delta(z')
=
\int_0^1
\nabla^2\lse_\delta\bigl(z'+t(z-z')\bigr)(z-z')
\,\mathrm{d}t.
\]
Taking the $\|\cdot\|_1$-norm and using the triangle inequality together
with Corollary~\ref{cor:hessian_opnorm}, we obtain
\begin{align*}
\|\sigma_\delta(z)-\sigma_\delta(z')\|_1
&\le
\int_0^1
\bigl\|
\nabla^2\lse_\delta(z'+t(z-z'))(z-z')
\bigr\|_1
\,\mathrm{d}t \\
&\le
\int_0^1
\bigl\|
\nabla^2\lse_\delta(z'+t(z-z'))
\bigr\|_{\infty\to 1}
\,\|z-z'\|_\infty
\,\mathrm{d}t \\
&\le
\int_0^1
\frac{1}{\delta}\|z-z'\|_\infty
\,\mathrm{d}t
=
\frac{1}{\delta}\|z-z'\|_\infty.
\end{align*}
For the second claim, the shift-invariance of the softmax follows directly from the defining formula, so that $\|\sigma_\delta(z)-\sigma_\delta(z')\|_1=\|\sigma_\delta(z)-\sigma_\delta(z'+c\1)\|_1\le\frac1\delta\|z-z'-c\1\|_\infty$ for every $c$; minimizing over $c$ (the optimal shift is
$c^*=\tfrac12(\max_a (z-z')(a)+\min_a (z-z')(a))$) gives~\eqref{eq:softmax_span_lip}.\Halmos
\end{proof}

\bibliographystyle{informs2014}
\bibliography{paper-mr}

\end{document}